\theoremstyle{plain}
\newtheorem{theorem}{Theorem}[section]
\newtheorem{lemma}[theorem]{Lemma}
\newtheorem{proposition}[theorem]{Proposition}
\newtheorem{assumption}[theorem]{Assumption}
\theoremstyle{remark}
\newtheorem{remark}[theorem]{Remark}
\newtheorem{notation}[theorem]{Notation}
\newtheorem{example}[theorem]{Example}
\numberwithin{equation}{section}
\newcommand{\R}{\mathbb{R}}
\renewcommand{\Re}{\operatorname{Re}}
\newcommand{\I}{\infty}
\newcommand{\norm}[1]{\left\lVert #1\right\rVert}
\newcommand{\Lebn}[2]{\left\lVert #1 \right\rVert_{L^{#2}}}
\newcommand{\tnorm}[1]{\lVert #1\rVert}
\newcommand{\tSobn}[2]{\lVert #1 \rVert_{H^{#2}}}
\newcommand{\IN}{\quad\text{in }}
\def\({\left(}
\def\){\right)}
\def\<{\left\langle}
\def\>{\right\rangle}
\def\le{\leqslant}
\def\ge{\geqslant}
\def\d{{\partial}}
\def\l{\lambda}
\newcommand{\al}{\alpha}
\newcommand{\g}{\gamma}
\newcommand{\eps}{\varepsilon}
\newcommand{\ti}{\widetilde}
\newcommand{\AG}{{\frac{\alpha}{\gamma-\alpha}}}
\begin{document}
\title[Cascade of phase shifts for Hartree equation]{Cascade of phase shifts
and creation of nonlinear focal points
for supercritical Semiclassical Hartree equation}
\author[S. Masaki]{Satoshi Masaki}
\address{Division of Mathematics\\
Graduate School of Information Sciences\\
Tohoku University\\
Sendai 980-8579, Japan}
\email{masaki@ims.is.tohoku.ac.jp}
\begin{abstract}
We consider the semiclassical limit of the Hartree equation
with a data causing a focusing at a point.
We study the asymptotic behavior of phase function
associated with the WKB approximation
near the caustic when a nonlinearity is supercritical.
In this case, it is known that a phase shift occurs in a neighborhood of focusing time
in the case of focusing cubic nonlinear Schr\"odinger equation.
Thanks to the smoothness of the nonlocal nonlinearities,
we justify the WKB-type approximation of the solution for a
data which is larger than in the previous results
and is not necessarily well-prepared.
We also show by an analysis of the limit hydrodynamical equaiton that, however,
this WKB-type approximation breaks down before reaching the focal point:
Nonlinear effects lead to the formation of singularity
of the leading term of the phase function.
\end{abstract}
\maketitle

\section{Introduction}\label{sec:intro}
This paper is devoted to the study of the semiclassical limit $\eps \to 0$
for the Cauchy problem of the semiclassical nonlinear Schr\"odinger equation
for $(t,x) \in \R_+ \times \R^n$
\begin{align}\label{eq:r3}
	i\eps\d_t u^\eps +\frac{\eps^2}{2}\Delta u^\eps =& \eps^\al N(u^\eps),&
	u^\eps_{|t=0}(x) =& a_0(x)e^{-i\frac{|x|^2}{2\eps}}
\end{align}
with the nonlocal nonlinearity of Hartree type
\begin{equation}\label{eq:h}
	N(u^\eps) = \lambda (\lvert x\rvert ^{-\gamma}\ast  |u^\eps|^2)u^\eps,
\end{equation}
where $n\ge 3$, $\l$ is a real number, and $\g$ is a positive number.
In this paper we consider the small-nonlinearity case $\alpha>0$.
The case $\al=0$ is studied in \cite{AC-SP,CM-AA}.
In the case of linear equation $N \equiv 0$,
the quadratic oscillation in the initial data causes a caustic at the origin at $t=1$.
In \cite{CaIUMJ,CaCMP}, R. Carles justified the general heuristics presented in \cite{HK-WM}
in the case of \eqref{eq:r3} with $N(y) = |y|^\beta y$ (see, also \cite{CaBook}),
and two different notions of the criticality for $\al$ are realized.
One is concerned with the nonlinear effect on the behavior far from the focal point,
and the other with that near the focal point.
The situation is similar in the case of Hartree equation
\begin{align}\label{eq:r4}
  i\eps\d_t u^\eps +\frac{\eps^2}{2}\Delta u^\eps &= \lambda
                \eps^\al (\lvert x\rvert ^{-\gamma}\ast  |u^\eps|^2)u^\eps, \\
	\label{eq:odata}
	u^\eps_{|t=0}(x) &= a_0(x)e^{-i\frac{|x|^2}{2\eps}}.
\end{align}
(see, \cite{CL-FT,CMS-SIAM,MaASPM}).
The two critical indices are $\al = 1$ (far from focal point) and
$\al = \g$ (near the focal point).
They are completely different notions.
For example, the second index $\al=\g$ depends on the shape
of the nonlinearity while the first not.
Using terminologies in \cite{CaJHDE}, we have the following nine cases:
\begin{center}
\begin{tabular}{c|c|c|c|}
 & $\alpha > 1$ & $\alpha = 1$ & $\alpha < 1$ \\
\hline
$\alpha>\gamma$ & linear WKB & nonlinear WKB & supercritical WKB \\
 & linear caustic & linear caustic & linear caustic \\
\hline
$\alpha=\gamma$ & linear WKB & nonlinear WKB & supercritical WKB \\
 & nonlinear caustic & nonlinear caustic & nonlinear caustic  \\
\hline
$\alpha<\gamma$ & linear WKB & nonlinear WKB & supercritical WKB \\
 & supercritical caustic & supercritical caustic & supercritical caustic  \\
\hline
\end{tabular}
\end{center}
Roughly speaking, the term ``linear WKB'' means that, far from the caustic,
the propagation of $u^\eps$ does not involve the nonlinear effect
at leading order.
The term ``linear caustic'' means that the nonlinear effect is negligible at leading order
when the solution crosses the focal point. 

Now, let us be more precise about this problem with \eqref{eq:r4}--\eqref{eq:odata}.
Let $w^\eps $ be the solution of the linear equation
$(i\eps \partial_t + (\eps^2/2)\Delta)w^\eps=0$
with the same initial data as in \eqref{eq:odata}.
Note that $w^\eps$ is approximated by the WKB-type approximate solution
\begin{equation}\label{eq:linear}
	v^\eps_{\mathrm{lin}}=\frac{1}{(1-t)^{n/2}}a_0 \(\frac{x}{1-t}\) e^{i\frac{|x|^2}{2\eps (t-1)}}
\end{equation}
as long as $\eps /(1-t)$ is small.
In the linear WKB case $\al>1$,
it is shown in \cite{CL-FT,MaAHP,MaASPM} that
the solution $u^\eps$ is close to $w^\eps$ (and so to $v^\eps_{\mathrm{lin}}$)
before caustic.
The time interval in which $u^\eps$ is approximated by $w^\eps$ 
depends on the latter critical notion $\al=\g$:
If $\al>\g$ then $u^\eps \to w^\eps$ as $\eps\to0$ for all $t \le 1$;
if $\al=\g$ then it holds for $1- t \ge  \Lambda \eps$ with a large $\Lambda$;
if $\al<\g$ then it holds for $1- t \ge  \Lambda \eps^\mu$ with a large $\Lambda$
and some $\mu=\mu(\al,\g) \le (\al-1)/(\g-1)$.
In the case $\al \ge \g$,
the asymptotic profile of the solution beyond the caustic is also given in \cite{CL-FT}.
Let us proceed to the supercritical caustic case $\al<\g$.
In this case, infinitely many phase shifts occur between the time $1- t = \Lambda_1\eps^{\frac{\al-1}{\g-1}}$,
called a \emph{first boundary layer}, and the time
$ 1-t = \Lambda \eps^{{\al}/{\g}}$,
called a \emph{final layer}.
This phenomena, called \emph{cascade of phase shifts},
is first shown in \cite{CaJHDE} for \eqref{eq:r3} with 
certain class of nonlinearities including the cubic nonlinearity $N(y) = |y|^2 y$,
and the asymptotic behavior of the solution is given
before the final layer, $1-t\gg \eps^{{\al-1}/{\g-1}}$,
 in the case $(\gamma>)\alpha>1$.
The similar result is proven also
in the nonlinear and supercritical WKB case $\alpha \le 1$,
provided the initial data is a properly-modified one of the form
\begin{equation}\label{eq:wpdata}
	u_{|t=0}^\eps(x) = b_0(\eps^{\frac{\al}{\g}},x) e^{-i\frac{|x|^2}{2\eps}} \exp (i\eps^{\frac{\al}{\g}-1}\phi_0(\eps^{\frac{\al}{\g}},x)),
\end{equation}
where $(b_0(t,x),\phi_0(t,x))$ is a suitable function defined in terms of  $a_0$.
Let us call this type of data as a \emph{well-prepared data}.

The aim of this paper is 
to give an explicit asymptotic profile of the solution of \eqref{eq:r4}-\eqref{eq:odata}
(before and) on the final layer for whole $\g > \al >0$ with a not-modified data.
In particular, the behavior of the solution is new in the following situations:
\begin{itemize}
\item On the final layer,
that is, at $t=1-T^{-1} \eps^{\al/\g}$ with not necessarily small $T>0$.
\item $\al \le 1$ with an initial data which is not necessarily of the form \eqref{eq:wpdata}.
\end{itemize}
Moreover, it will be shown that the WKB-type approximation 
of the solution breaks down at some time on the final layer, that is, at $t=t_c:=1-(T^{*})^{-1} \eps^{\al/\g}$
for some $T^*> 0$ for a certain class of initial data.

For our analysis, we apply the following transform introduced in \cite{CaJHDE}:
\begin{equation}\label{eq:spct}
	u^\eps(t,x) = \frac{1}{(1-t)^{n/2}} \psi^\eps \( \frac{\eps^{\frac{\al}{\g}}}{1-t}, \frac{x}{1-t} \)
	\exp\(i \frac{|x|^2}{2\eps(t-1)}\).
\end{equation}
Then, \eqref{eq:r4}--\eqref{eq:odata} becomes
\begin{align*}
  i\eps^{1-\frac{\al}{\g}} \d_\tau \psi^\eps +\frac{(\eps^{1-\frac{\al}{\g}})^2}{2}\Delta_y \psi^\eps &= \lambda
                \tau^{\g-2} (\lvert y\rvert ^{-\g}\ast  |\psi^\eps|^2)\psi^\eps, &
	\psi^\eps_{|\tau=\eps^{\al/\g}}(y) &= a_0(y),
\end{align*}
where $\tau=\eps^{\al/\g}/(1-t)$ and $y=x/(1-t)$.
The quadratic oscillation of the initial data is canceled out. 
This transformation clarifies the problem;
if the nonlinear effect is weak and so if $u^\eps$ behaves like $w^\eps$
and $v^\eps_{\mathrm{lin}}$, then $\psi^\eps$ is close to $a_0$;
if the solution has a rapid oscillation other than $\exp(i|x|^2/2\eps(t-1))$,
then $\psi^\eps$ becomes oscillatory.
We change the parameter into $h=\eps^{1-\al/\g}$.
The limit $\eps\to 0$ is equivalent to $h\to 0$ as long as $\al<\g$.
Denoting $\psi^\eps$ by $\psi^h$, 
our problem is reduced to the limit $h\to 0$ of 
the solution to 
\begin{align}\label{eq:r5}
  ih\d_\tau \psi^h +\frac{h^2}{2}\Delta_y \psi^h &= \l
                \tau^{\g-2} (\lvert y\rvert ^{-\g}\ast  |\psi^h|^2)\psi^h, &
	\psi^h_{|\tau=h^{\frac{\al}{\g-\al}}}(y) &= a_0(y).
\end{align}
Since $\tau=h^{\al/(\g-\al)}/(1-t)$,
the correspondence between boundary layers of $t$ and $\tau$ variables is as follows:
\begin{equation}\label{eq:timetranslation}
\begin{aligned}
\text{initial time:}& & t&{}=0 &&\longleftrightarrow 
 \tau = h^{\frac{\al}{\g-\al}}, 	\\
\text{first layer:}& & t&{}=1-\Lambda_1\eps^{\frac{\al-1}{\g-1}}&&
 \longleftrightarrow  \tau = \Lambda_1^{-1} h^{\frac{1}{\g-1}}, 	\\
\text{final layer:}& & t&{}=1-T^{-1}\eps^{\frac{\al}{\g}}&&
 \longleftrightarrow  \tau = T.
\end{aligned}
\end{equation}

Our analysis of \eqref{eq:r5} is based on a generalized WKB method by G\'erard \cite{PGEP} and Grenier \cite{Grenier98}.
We apply a modified Madelung transformation
\begin{equation}\label{eq:grenier}
	\psi^h = a^h \exp\(i\frac{\phi^h}h\)
\end{equation}
to \eqref{eq:r5}, where $a^h$ is complex valued and $\phi^h$ is real valued.
This choice is slightly different from the usual Madelung transformation
\[
	\psi^h = \sqrt{\rho^h} \exp\(i\frac{S^h}h\)
\]
(see \cite{GLM-TJM}) which leads us to an equation of
compressible fluid with the quantum pressure.
It is essential that $a^h$ takes complex value and, therefore, $\phi^h \neq S^h$ in general.
The choice \eqref{eq:grenier} allows us to rewrite \eqref{eq:r5} as the system for the pair $(a^h,\phi^h)$:
\begin{equation}\label{eq:sys}
	\left\{
	\begin{aligned}
		&\partial_\tau a^h + \nabla \phi^h \cdot \nabla a^h + \frac12 a^h \Delta \phi^h = i\frac{h}{2}\Delta a^h,\\
		&\partial_\tau \phi^h + \frac12 |\nabla \phi^h|^2 + \l \tau^{\g-2}(|y|^{-\g}*|a^h|^2) =0, \\
		& a^h_{|\tau=h^{\frac{\al}{\g-\al}}}=a_0, \quad \phi^h_{|\tau=h^{\frac{\al}{\g-\al}}}=0.
	\end{aligned}
	\right.
\end{equation}
If we approximate the solution of \eqref{eq:sys} up to $h^1$ order,
that is, if we establish an asymptotics such as
\begin{align}\label{eq:h1expansion}
	a^h&{}=b_0+h b_1 + o(h^1), &
	\phi^h&{}=\phi_0 + h \phi_1 + o(h^1),
\end{align}
then, by means of \eqref{eq:grenier},
we immediately obtain
the WKB-type  approximate solution  of $\psi^h$.
This method is first employed for nonlinear Schr\"odinger equation with 
a certain class of defocussing local nonlinearity including the cubic nonlinearity
$N(y)=|y|^2 y$ for analytic data \cite{PGEP}
and for Sobolev data \cite{Grenier98} (see, also \cite{AC-ARMA,CR-CMP}),
and is extended to  the Gross-Pitaevskii equation \cite{AC-GP} (see, also \cite{LZ-ARMA}) and
to the equation with (de)focusing nonlocal nonlinearities:
Schr\"odinger-Poisson  equation \cite{AC-SP,LT-MAA} (see, also \cite{LL-EJDE,ZhSIAM}) and Hartree equation \cite{CM-AA}.

Our goal is to justify \eqref{eq:h1expansion}.
The natural choice of the main part $(b_0,\phi_0)$ may be $(a^h,\phi^h)_{|h=0}$.
Letting $h=0$ in \eqref{eq:sys}, we obtain a hydrodynamical system
\begin{equation}\label{eq:lsys}
	\left\{
	\begin{aligned}
		&\partial_\tau b_0 + \nabla \phi_0 \cdot \nabla b_0 + \frac12 b_0 \Delta \phi_0 = 0,\\
		&\partial_\tau \phi_0 + \frac12 |\nabla \phi_0|^2 + \l \tau^{\g-2}(|y|^{-\g}*|b_0|^2) =0, \\
		& b_{0|\tau=0}=a_0, \quad \phi_{0|\tau=0}=0.
	\end{aligned}
	\right.
\end{equation}
For a Sobolev data $a_0$,
\eqref{eq:lsys} has a unique local solution (Theorem \ref{thm:main}).
Then, the main task is to determine $h^1$-term $(b_1,\phi_1)$.
The difficulty of finding $h^1$-term lies in the following two respects:
\begin{enumerate}
\item The equaiton \eqref{eq:sys} itself depends on $h$ through
the term $i\frac{h}{2}\Delta a^h$.
\item The initial time of \eqref{eq:sys} tends to $\tau=0$ at a speed
$h^{\frac{\al}{\g-\al}}$.
\end{enumerate}
We will see that the first becomes crucial when we consider the asymptotic behavior of the solution on the final layer,
and that a suitable choice of $h^1$-term is the key for overcoming this difficulty.
We use $(b_{\mathrm{equ}},\phi_{\mathrm{equ}})$ defined by \eqref{eq:tbtp1}, below, as an $h^1$-term 
and show an asymptotic behavior of $\psi^h$ 
for $\tau\in [h^{\frac{\al}{\g-\al}},T]$ when $\al \ge1$, where $T$ is independent of $h$
 (Theorem \ref{thm:main}).
 By \eqref{eq:timetranslation},
 $\tau\in [h^{\frac{\al}{\g-\al}},T]$ is equivalent to $t\in[0,1-T^{-1}\eps^{\al/\g}]$,
from the initial time to the final layer.
On the other hand,
the second becomes crucial in the supercritical case $\alpha<1$.
This is because the moving speed of initial time becomes too slow.
In this case, we need three more kinds of correction terms whose order are 
between $h^0$ and $h^1$. 
With them, we describe the asymptotic behavior also for $\al \in]0,1[$.
A heuristic observation on these correction terms
is in Section \ref{subsec:summary2}.
The rigorous result is in Theorem \ref{thm:sscase}.

The well-prepared data \eqref{eq:wpdata}
is closely related to  the second problem listed above.
The function $(b_0,\phi_0)$ in \eqref{eq:wpdata}
is the solution of \eqref{eq:lsys}.
If we employ the well-prepared data and consider \eqref{eq:r4} with \eqref{eq:wpdata},
then \eqref{eq:sys} changes into
\begin{equation}\label{eq:wpsys}
	\left\{
	\begin{aligned}
		&\partial_\tau a^h + \nabla \phi^h \cdot \nabla a^h + \frac12 a^h \Delta \phi^h = i\frac{h}{2}\Delta a^h,\\
		&\partial_\tau \phi^h + \frac12 |\nabla \phi^h|^2 + \l \tau^{\g-2}(|y|^{-\g}*|a^h|^2) =0, \\
		& a^h_{|\tau=h^{\frac{\al}{\g-\al}}}=b_0(h^{\frac{\al}{\g-\al}}), \quad \phi^h_{|\tau=h^{\frac{\al}{\g-\al}}}=\phi_0(h^{\frac{\al}{\g-\al}}).
	\end{aligned}
	\right.
\end{equation}
The initial time is still moving, however,
the only difference between \eqref{eq:wpsys} and \eqref{eq:lsys}
is the existence of $i\frac{h}2\Delta a^h$.
Thus, we will see that we do not meet with the second problem any longer.
This point is discussed in Sections \ref{subsec:wpdata}. 

We also consider the problem of global existence
of the solution to \eqref{eq:lsys}.
With the notation $(\rho,v) := (|b_0|^2,\nabla \phi_0)$,
\eqref{eq:lsys} is the compressible Euler equation with 
time-dependent pressure term of Hartree type:
\begin{equation}\label{eq:cE}
	\left\{
	\begin{aligned}
		&\partial_\tau \rho + \mathrm{div} (\rho v) = 0,\\
		&\partial_\tau v + (v \cdot \nabla)v + \l \tau^{\g-2}\nabla (|x|^{-\g}*\rho) =0,\\
		&\rho_{|\tau=0}=|a_0|^2, \quad v_{|\tau=0}=0. 
	\end{aligned}
	\right.
\end{equation}
We adapt results in \cite{ELT-IUMJ,MaRIMS} (see  also \cite{MP-JJAM,PeJJAM})
to prove that $C^1$-solution to \eqref{eq:cE} cannot be global in time,
in several situations
(Theorems \ref{thm:main2} and \ref{thm:main3}).
This implies \eqref{eq:h1expansion}
breaks down before caustic $t=1$ (see Section \ref{subsec:summary}).

\subsection{Main result I}
To state our result precisely, we introduce some notation.
For $n \ge 3$, $s>n/2+1$, $p\in [1, \I]$, and $q \in [1, \I]$,
we define a function space $Y^s_{p,q}(\R^n)$ by
\begin{equation}\label{def:Y}
	Y^s_{p,q}(\R^n) = \overline{C_0^\I (\R^n)}^{\norm{\cdot}_{Y^s_{p,q}(\R^n)}}
\end{equation}
with norm
\begin{equation}\label{def:Y2}
	\norm{\cdot}_{Y^s_{p,q}(\R^n)} :=
	\norm{\cdot}_{L^p(\R^n)} + \norm{\nabla \cdot}_{L^q(\R^n)}
	+ \norm{\nabla^2 \cdot}_{H^{s-2}(\R^n)}.
\end{equation}
We denote $Y^s_{p,q}=Y^s_{p,q}(\R^n)$, for short.
For $q<n$, we use the notation $q^* = nq/(n-q)$.
This space $Y^s_{p,q}$ is a modification of the Zhidkov space $X^s$,
which is defined, for $s>n/2$, by $X^s(\R^n) := \{  f \in L^\I(\R^n) | \nabla f \in H^{s-1}(\R^n) \}$.
The Zhidkov space was introduced in \cite{Zhidkov} (see, also \cite{Gallo}).
Roughly speaking, the exponents $p$ and $q$ in $Y^s_{p,q}$ indicate the decay rates at
the spatial infinity of a function and of its first derivative, respectively.
Moreover, the Zhidkov space $X^s$ corresponds to $Y^s_{\I,2}$ in a sense, if $n \ge 3$. 
We discuss these points more precisely in Section \ref{subsec:spaceY}.
We also note that $Y^s_{2,2}$ is the usual Sobolev space $H^s$.
We use the following notation: 
$Y^\I_{p,q} := \cap_{s>0}Y^s_{p,q}$;
for intervals $I_1$ and $I_2$ of $[1,\I]$,
$Y^s_{I_1,q} := \cap_{p\in I_1} Y^s_{p,q}$
and $Y^s_{p,I_2} := \cap_{q\in I_2} Y^s_{p,q}$.
These notation are sometimes used simultaneously, for example
$Y^\I_{I_1,I_2} := \cap_{s>0,p\in I_1,q\in I_2} Y^s_{p,q}$.
We also use the operator
\[
	|J^\eps|^s = e^{i\frac{|x|^2}{2\eps(t-1)}} |(1-t)\nabla|^s e^{-i\frac{|x|^2}{2\eps(t-1)}}.
\]
This is the scaled version of the Galilean operator
and is suitable for a study of rapid phases other than $e^{i|x|^2/2\eps(t-1)}$.

We also introduce the following systems for a pair
$(b_{\mathrm{equ}},\phi_{\mathrm{equ}}) $:
\begin{equation}\label{eq:tbtp1}
	\(
	\begin{aligned}
		&\d_\tau b_{\mathrm{equ}} + \nabla \phi_{\mathrm{equ}} 
		\cdot \nabla b_0
		+ \nabla \phi_0 \cdot \nabla b_{\mathrm{equ}}
		+  \frac{1}{2} b_{\mathrm{equ}} \Delta \phi_0 + \frac{1}{2} b_0 \Delta \phi_{\mathrm{equ}}
		= \frac{i}{2} \Delta b_0, \\
		&\d_\tau \phi_{\mathrm{equ}} + \nabla \phi_0 \cdot  \nabla \phi_{\mathrm{equ}}
		+ \l \tau^{\g-2} (|y|^{-\g}* 2 \Re \overline{b_0} b_{\mathrm{equ}} ) = 0,
	\end{aligned}
	\right.
\end{equation}
where $(b_0,\phi_0)$ is a solution of \eqref{eq:lsys}.
This will be posed with the zero data
\begin{align}\label{eq:tbtp2}
	b_{\mathrm{equ}|\tau=0}={}&0, &
	\phi_{\mathrm{equ}|\tau=0}={}& 0
\end{align}
or the data
\begin{align}\label{eq:tbtp3}
	b_{\mathrm{equ}|\tau=0}={}&0, &
	\phi_{\mathrm{equ}|\tau=0}={}& \l(|y|^{-\g}*|a_0|^2)/(\g-1).
\end{align}

\begin{notation}\label{not:asymp}
Let $T>0$ and $X$ be a Banach space. 
Let  $\{k_j\}$ be an increasing sequence of real number,
$\phi(t,x) \in C([0,T];X)$ be a function ,
and $\{\phi_j\}$ be a sequence of function in $X$.
We write
\[
	\phi(t,x) \asymp \sum_{j=1}^\I t^{k_j} \phi_j \IN X
\]
if it holds that
\[
	\norm{\phi(t,x) - \sum_{j=1}^J t^{k_j} \phi_j}_{X} = o(t^{k_J})
\]
as $t\to0$ for all $J\ge1$.
\end{notation}
We now state our main result.
To avoid complicity, here we state the result for $\al \ge 1$;
the linear and nonlinear WKB cases.
For the supercritical WKB case $\al<1$, see Theorem \ref{thm:sscase}.
\begin{assumption}\label{asmp:1}
Let $n \geq 4$ and $\l \in \R$.
The constants $\g$ and $\al$ satisfy $\max(1,n/2-2) < \g \leq n-2$
and $0< \al < \g$, respectively.
The initial data $a_0 \in H^\I$.
\end{assumption}
\begin{theorem}\label{thm:main}
Let assumption \ref{asmp:1} be satisfied. Assume $\al \ge 1$.
Then, there exists an existence time $T>0$ independent of $\eps$.
There also exist
$(b_0,\phi_0)$, $(b_{\mathrm{equ}}, \phi_{\mathrm{equ}}) \in C([0,T];H^\I \times Y^\I_{(n/\g,\I],(n/(\g+1),\I]})$
such that:
\begin{enumerate}
\item $\phi_0(\tau,y) \asymp \sum_{j=1}^\I \tau^{\g j -1} \varphi_j(y)$ 
in $Y^\I_{(n/\g,\I],(n/(\g+1),\I]}$.
\item The solution $u^\eps$ to \eqref{eq:r4} with \eqref{eq:odata}
satisfies the following asymptotics for all $s \ge 0$:
\begin{equation}\label{eq:asymptotics}
	\sup_{t \in [0,1-T^{-1}\eps^{\al/\g}]} \Lebn{|J^\eps|^s \( u^\eps(t) e^{-i\Phi^\eps (t)}
	- \frac{1}{(1-t)^{n/2}} A^\eps(t) e^{i\frac{|\cdot|^2}{2\eps(t-1)}} \)}{2} \to 0
\end{equation}
as $\eps \to 0$ with
\begin{equation}\label{def:Phi}
	\Phi^\eps(t,x) = \eps^{\frac{\al}{\g}-1} 
	\phi_0\(\frac{\eps^{\frac{\al}{\g}}}{1-t},\frac{x}{1-t}\) 
\end{equation}
and
\begin{equation}\label{def:A}
	A^\eps(t,x) = 
	b_0\(\frac{\eps^{\frac{\al}{\g}}}{1-t},\frac{x}{1-t} \)
	\exp \(i\phi_{\mathrm{equ}}\(\frac{\eps^{\frac{\al}{\g}}}{1-t},\frac{x}{1-t} \)\) ,
\end{equation}
where $(b_0,w_0)$ solves \eqref{eq:lsys} and $(b_{\mathrm{equ}},\phi_{\mathrm{equ}})$ solves
\eqref{eq:tbtp1} with \eqref{eq:tbtp2} if $\al>1$ and
 with \eqref{eq:tbtp3} if $\al=1$.
\end{enumerate}
\end{theorem}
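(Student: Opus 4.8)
# Proof Proposal for Theorem \ref{thm:main}

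The plan is to work with the rescaled unknown $\psi^h$ of \eqref{eq:spct} and its modified Madelung factorization \eqref{eq:grenier}, to establish the expansion \eqref{eq:h1expansion} for $(a^h,\phi^h)$ uniformly on $[h^{\AG},T]$, and then to unwind the transforms; the point is that under \eqref{eq:spct} the operator $|J^\eps|^s$ acts on $u^\eps$ exactly as $|\nabla_y|^s$ acts on $\psi^h$ after the scaling $y=x/(1-t)$, so \eqref{eq:asymptotics} amounts to an estimate for $\psi^h e^{-i\phi_0/h}-b_0 e^{i\phi_{\mathrm{equ}}}$ in $H^s_y$. First I would construct the profiles. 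After the Madelung change of variables, \eqref{eq:lsys} is a quasilinear symmetric hyperbolic system for the complex amplitude $b_0$ and the real phase $\phi_0$, perturbed by the nonlocal term $\l\tau^{\g-2}(|y|^{-\g}*|b_0|^2)$; since convolution with $|y|^{-\g}$ is smoothing and $\tau^{\g-2}$ is integrable near $\tau=0$ (here $\g>1$ from Assumption~\ref{asmp:1} is used), the standard energy method gives a unique solution $(b_0,\phi_0)\in C([0,T];H^\I\times Y^\I_{(n/\g,\I],(n/(\g+1),\I]})$ on an $\eps$-independent $T>0$, the Zhidkov-type space being forced by the fact that $|y|^{-\g}*|b_0|^2$ is bounded but decays only like $|y|^{-\g}$ and its gradient like $|y|^{-\g-1}$. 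The expansion $\phi_0\asymp\sum_j\tau^{\g j-1}\varphi_j$ is then obtained by inserting the ansatz into the $\phi_0$-equation: the forcing $-\l\tau^{\g-2}(|y|^{-\g}*|a_0|^2)$ produces $\varphi_1=-\l(|y|^{-\g}*|a_0|^2)/(\g-1)$, the quadratic term $\tfrac12|\nabla\phi_0|^2$ together with the $O(\tau^\g)$ correction to $b_0$ feeds the order $\tau^{2\g-1}$, and so on, a bootstrap in $Y^\I$ turning the formal computation into the stated asymptotics. With $(b_0,\phi_0)$ in hand, the linear system \eqref{eq:tbtp1} — whose source $\tfrac i2\Delta b_0$ lies in $C([0,T];H^\I)$ — with data \eqref{eq:tbtp2} or \eqref{eq:tbtp3} is solved on the same $[0,T]$ by the same method, producing $(b_{\mathrm{equ}},\phi_{\mathrm{equ}})$ in the asserted class.

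The heart of the matter is a stability estimate. Put $a^h_{\mathrm{app}}=b_0+hb_{\mathrm{equ}}$ and $\phi^h_{\mathrm{app}}=\phi_0+h\phi_{\mathrm{equ}}$ and insert this pair into \eqref{eq:sys}. By \eqref{eq:lsys} the $h^0$ terms cancel and by \eqref{eq:tbtp1} the $h^1$ terms cancel, so the residual consists only of the $h^2$-terms $h^2(\nabla\phi_{\mathrm{equ}}\!\cdot\!\nabla b_{\mathrm{equ}}+\tfrac12 b_{\mathrm{equ}}\Delta\phi_{\mathrm{equ}}-\tfrac i2\Delta b_{\mathrm{equ}})$ in the amplitude equation and $h^2(\tfrac12|\nabla\phi_{\mathrm{equ}}|^2+\l\tau^{\g-2}(|y|^{-\g}*|b_{\mathrm{equ}}|^2))$ in the phase equation; the weight $\tau^{\g-2}$ is harmless after integration in $\tau$. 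For the data at $\tau=h^{\AG}$, the expansion of $b_0$ gives $b_0(h^{\AG})-a_0=O(h^{\al\g/(\g-\al)})$, an exponent exceeding $1$ for $\al\ge1$, hence $o(h)$, and likewise $hb_{\mathrm{equ}}(h^{\AG})=o(h)$; for the phase, $\phi_0(h^{\AG})=\varphi_1\,h^{\AGG}+o(h^{\AGG})$, which is $o(h)$ when $\al>1$ but equals $h\varphi_1+o(h)$ when $\al=1$ (there $\AGG=1$), and this last term is cancelled up to $o(h)$ by $h\phi_{\mathrm{equ}}(h^{\AG})=h\phi_{\mathrm{equ}}(0)+o(h)$ precisely because \eqref{eq:tbtp3} prescribes $\phi_{\mathrm{equ}|\tau=0}=\l(|y|^{-\g}*|a_0|^2)/(\g-1)=-\varphi_1$. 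Hence the mismatch of $(a^h_{\mathrm{app}},\phi^h_{\mathrm{app}})$ with the data of \eqref{eq:sys} at $\tau=h^{\AG}$ is $o(h)$ in $H^s\times Y^s$ for every $s$.

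Given these two inputs I would run a G\'erard--Grenier type energy estimate for the difference $(\dot a^h,\dot\phi^h):=(a^h-a^h_{\mathrm{app}},\phi^h-\phi^h_{\mathrm{app}})$, which solves a linear symmetric hyperbolic system with the above residual as source. The diffusive term $i\tfrac h2\Delta a^h$ is skew-adjoint, so it contributes nothing to the $L^2_y$ energy and, after commuting derivatives, nothing to the principal part of the $H^s_y$ energy; the estimates therefore close uniformly in $h$, while the nonlocal term — being smoothing — is dominated by lower-order norms carrying the integrable factor $\tau^{\g-2}$, so the Gronwall constant on the bounded interval $[h^{\AG},T]$ stays bounded. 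One concludes $\sup_{\tau\in[h^{\AG},T]}\norm{(\dot a^h,\dot\phi^h)}_{H^s\times Y^s}=o(h)$ for all $s$, which is \eqref{eq:h1expansion}. Writing $\psi^h e^{-i\phi_0/h}=a^h\exp(i(\phi^h-\phi_0)/h)$ and using $(\phi^h-\phi_0)/h=\phi_{\mathrm{equ}}+o(1)$ and $a^h=b_0+o(1)$ — uniformly in $\tau\in[h^{\AG},T]$ — together with the continuity of multiplication by $b_0\in H^\I$ and of composition with $\exp$ on the relevant scales ($H^s$ being an algebra for $s>n/2$), one obtains $\psi^h e^{-i\phi_0/h}\to b_0 e^{i\phi_{\mathrm{equ}}}$ in $H^s_y$; translating back through \eqref{eq:spct} gives exactly \eqref{eq:asymptotics} with $\Phi^\eps,A^\eps$ as in \eqref{def:Phi},\eqref{def:A}.

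\emph{Main obstacle.} The delicate point is the interaction of the moving initial time $\tau=h^{\AG}$ with the $h^1$-order, and this is exactly why the seemingly ad hoc data \eqref{eq:tbtp3} must be used at $\al=1$: the ``pre-history'' $[0,h^{\AG}]$ of the true solution injects a genuine $O(h)$ contribution into the phase at the starting time, and $\phi_{\mathrm{equ}}$ has to be tuned to absorb it. Making this cancellation exact up to $o(h)$ requires knowing the asymptotic expansion of $\phi_0$ near $\tau=0$ in the Zhidkov-type spaces with the precise leading coefficient $\varphi_1$, which I expect to be the most technical ingredient; the rest is a careful adaptation of the G\'erard--Grenier scheme, made possible by the smoothing of the Hartree nonlinearity.
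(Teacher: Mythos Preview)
Your proposal is correct and follows essentially the same architecture as the paper: reduce via \eqref{eq:spct} and \eqref{eq:grenier}, solve \eqref{eq:lsys} and \eqref{eq:tbtp1} by the symmetric-hyperbolic energy method, establish the $\tau\to0$ expansion of $(b_0,\phi_0)$ by induction, and use it to see that the data mismatch at $\tau=h^{\AG}$ is $o(h)$, with the cancellation at $\al=1$ supplied precisely by \eqref{eq:tbtp3}. The only organizational difference is the last convergence step: the paper treats the quotient $\big((a^h-b_0)/h,(\nabla\phi^h-\nabla\phi_0)/h\big)$ as a new unknown solving an $h$-dependent system and applies its abstract existence-and-limit framework (Proposition~\ref{prop:existence-sys}) once more, whereas you build $(b_0+hb_{\mathrm{equ}},\phi_0+h\phi_{\mathrm{equ}})$ as an approximate pair and run a residual/stability argument --- the two routes are equivalent and close via the same energy inequality.
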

\begin{remark}\label{rmk:main}
\begin{enumerate}
\item By the definition of ``$\asymp$'' sign,
the expansion $\phi_0(\tau,y) \asymp \sum_{j=1}^\I\tau^{\g j -1} \varphi_j(y)$
 implies
\begin{equation}\label{eq:p-exp}
 	\phi_0(\tau) = \sum_{j=1}^J \tau^{\g j -1} \varphi_j + o(\tau^{\g J-1})
 	\IN Y^\I_{(n/\g,\I],(n/(\g+1),\I]}
\end{equation}
as $\tau\to 0$ for all $J \ge 1$.
\item In Theorem \ref{thm:main},
we only need $a_0 \in H^{s_0}$ with some $s_0 > n/2 + 3$.
Then, $(b_0,\phi_0)$ belongs to $C([0,T],H^{s_0} \times Y^{s_0+2}_{(n/\g,\I],(n/(\g+1),\I]})$ and 
$(b_{\mathrm{equ}},\phi_{\mathrm{equ}})$ belongs to $C([0,T],H^{s_0-2} \times Y^{s_0}_{(n/\g,\I],(n/(\g+1),\I]})$.
The asymptotics \eqref{eq:asymptotics} holds for any $s \in [0,s_0-4]$.
\item All $\varphi_j$ are given explicitly (but inductively) in terms of $a_0$.
For example, $\varphi_1=\l(|x|^{-\g}*|a_0|^2)/(1-\g)$ and
\begin{align*}
	\varphi_2={}&-\frac{\l^2}{2(\g-1)^2(2\g-1)}
	|\nabla (|x|^{-\g}*|a_0|^2)|^2 \\
	&{} - \frac{\l^2}{\g(\g-1)(2\g-1)}\(|x|^{-\g} *
	(\nabla\cdot (  |a_0|^2\nabla(|x|^{-\g}*|a_0|^2))\) 
\end{align*} 
(see, Proposition \ref{prop:t-expansion}).
\item Even if the system \eqref{eq:tbtp1} is posed with zero initial condition,
its solution $(b_{\mathrm{equ}},\phi_{\mathrm{equ}})$ is not identically zero because of the presence of the (nontrivial) external force $\frac{i}{2}\Delta b_0$.
\item The choice of the initial data \eqref{eq:tbtp3} 
is the key for the analysis in the case $\al =1$.
We discuss this point more precisely in Section \ref{subsec:summary2}.
\end{enumerate}
\end{remark}
The asymptotics \eqref{eq:asymptotics} reads
\begin{equation}\label{eq:asympL2}
	u^\eps(t,x) \sim \frac{1}{(1-t)^{n/2}} A^\eps(t,x)e^{i\Phi^\eps (t,x)}
	e^{i\frac{|x|^2}{2\eps(t-1)}}
\end{equation}
as $\eps \to 0$. Indeed, this holds in $L^\I([0,1-T^{-1}\eps^{\al/\g}];L^2)$.
This explains the cascade of phase shifts.
We consider the case $1<\al(<\g)$.
Combining \eqref{eq:p-exp} and \eqref{def:Phi}, we have
\[
	\Phi^\eps(t,x) = \sum_{j=1}^J \frac{\eps^{\al j-1}}{(1-t)^{\g j-1}}
	\varphi_j \(\frac{x}{1-t}\) + o \( \frac{\eps^{\al J-1}}{(1-t)^{\g J-1}}\).
\]
We set $g^\eps_j(t,x) = \frac{\eps^{j\al -1}}{(1-t)^{j\g -1}} \varphi_j (\frac{x}{1-t})$.
Then, the above asymptotics yields
\begin{align*}
	\Phi^\eps(t,x) &\sim 0 &  \text{for }&1-t \gg \eps^{\frac{\al-1}{\g-1}} ,\\
	\Phi^\eps(t,x) &\sim g^\eps_1(t,x) &  \text{for }&1-t \gg \eps^{\frac{2\al-1}{2\g-1}} , \\
	\Phi^\eps(t,x) &\sim g^\eps_1(t,x) + g^\eps_2(t,x) &  \text{for }&1-t \gg \eps^{\frac{3\al-1}{3\g-1}} , \\
	&\vdots & &\vdots \\
		\Phi^\eps(t,x) &\sim \sum_{j=1}^J g^\eps_j(t,x) &  \text{for }&1-t \gg \eps^{\frac{J\al-1}{J\g-1}} , \\
	&\vdots & &\vdots
\end{align*}
as $\eps\to0$.
On the other hand, the amplitude $A^\eps$ satisfies
\[
	A^\eps(t,x) = a_0\(\frac{x}{1-t}\) + o(1)
\]
as $\eps^{\frac{\al}{\g}}/(1-t) \to 0$.
Substitute these expansions to \eqref{eq:asympL2} to obtain
\begin{align*}
	u^\eps &\sim  
	v^\eps_{\mathrm{lin}}=
	\frac{1}{(1-t)^{n/2}} a_0\(\frac{x}{1-t}\) e^{i\frac{|x|^2}{2\eps(t-1)}}
	&  \text{for }&1-t \gg \eps^{\frac{\al-1}{\g-1}} ,\\
	u^\eps &\sim
	v^\eps_{\mathrm{lin}}e^{ig^\eps_1(t,x)}
	&  \text{for }&1-t \gg \eps^{\frac{2\al-1}{2\g-1}} , \\
	u^\eps &\sim 
	v^\eps_{\mathrm{lin}}e^{ig^\eps_1(t,x)+ig^\eps_2(t,x)}
	&  \text{for }&1-t \gg \eps^{\frac{3\al-1}{3\g-1}} , \\
	&\vdots & &\vdots \\
	u^\eps &\sim
	v^\eps_{\mathrm{lin}}e^{i\sum_{j=1}^J g^\eps_j(t,x)}
	&  \text{for }&1-t \gg \eps^{\frac{J\al-1}{J\g-1}} , \\
	&\vdots & &\vdots
\end{align*}
Recall that $v^\eps_{\mathrm{lin}}$, given in \eqref{eq:linear},
is the approximate solution for the linear solution $w^\eps$.
One sees that the solution behaves like a free solution in the region
$1-t\gg \eps^{\frac{\al-1}{\g-1}}$ where the initial time $t=0$ lies,
and that, at each boundary layer of
size $1-t \sim \eps^{\frac{J\al-1}{J\g-1}}$
(the \textit{$J$-th boundary layer}),
a new phase associated with $g^\eps_J$ becomes relevant.

\subsection{Main result II}
Our next result is the non-existence of a global solution to \eqref{eq:lsys}.
We further assume the radial symmetry and $\g=n-2$ ($n\ge 3$)
in \eqref{eq:cE}.
A suitable change of $\l$
yields the radial compressible Euler-Poisson equations
\begin{equation}\label{eq:EPr}
\left\{
\begin{aligned}
&\partial_\tau (\rho r^{n-1}) + \partial_r (\rho v r^{n-1}) = 0, \\
&\partial_\tau v + v \partial_r v -\l \tau^{n-4} \partial_r V_{\mathrm{p}} = 0, \\
& \partial_r (r^{n-1}\partial_r V_{\mathrm{p}}) = \rho r^{n-1},\\
&\rho_{|\tau=0}=|a_0|^2, \quad v_{|\tau=0}=0,
\end{aligned}
\right.
\end{equation}
where $r:=|x|$.
We define the ``mean mass'' $M_0$ in $\{|x|\le r\}$ by 
\[
	M_0(r) := \frac{1}{r^n}\int_0^r |a_0(s)|^2 s^{n-1} ds.
\]
\begin{theorem}\label{thm:main2}
Let $\l < 0$ and $n \ge 4$.
For every nonzero initial amplitude $a_0 \in C^1$, the solution to \eqref{eq:EPr} breaks down no latter than
\[
	T^* = \(\frac{(n-2)(n-3)}{|\l| \sup_{r \ge 0} M_0(r)}\)^{\frac1{n-2}}<\I.
\]
\end{theorem}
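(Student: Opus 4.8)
The plan is to run a characteristic-flow argument, in the spirit of \cite{ELT-IUMJ,MaRIMS}, the only new ingredient being the time-dependent coupling $\tau^{n-4}$ in \eqref{eq:EPr}. Suppose a $C^1$ solution $(\rho,v,V_{\mathrm p})$ exists on $[0,T)$, and for $r_0>0$ let $\tau\mapsto X(\tau,r_0)$ be the characteristic determined by $\d_\tau X=v(\tau,X)$, $X(0,r_0)=r_0$. By radial symmetry $v(\tau,0)=0$, so the origin is an invariant characteristic; hence $X(\tau,r_0)>0$ for all $r_0>0$ as long as the solution remains $C^1$. Writing $m(\tau,r):=\int_0^r\rho(\tau,s)s^{n-1}\,ds$ (so that $m(0,r)=r^n M_0(r)$), the first equation of \eqref{eq:EPr} gives $\d_\tau m+v\,\d_r m=0$, so $m$ is transported along characteristics: $m(\tau,X(\tau,r_0))\equiv m(0,r_0)=r_0^n M_0(r_0)=:M(r_0)$. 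Integrating the Poisson equation in $r$ yields $r^{n-1}\d_r V_{\mathrm p}(\tau,r)=m(\tau,r)$, so along $X$ the momentum equation becomes the closed ODE
\[
	\ddot X(\tau,r_0)=\l\,\tau^{n-4}\,\frac{M(r_0)}{X(\tau,r_0)^{n-1}}.
\]

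Next I would use the sign of $\l$. Since $\l<0$, $n\ge4$, $M(r_0)\ge0$ and $X>0$, the right-hand side is $\le0$, so $\ddot X\le0$; together with $\dot X(0,r_0)=v(0,r_0)=0$ this forces $X(\cdot,r_0)$ to be nonincreasing, in particular $0<X(\tau,r_0)\le r_0$. Feeding the bound $X\le r_0$ back into the ODE gives $\ddot X(\tau,r_0)\le-|\l|\,\tau^{n-4}M(r_0)/r_0^{n-1}$, and integrating twice in $\tau$ (using $\dot X(0,r_0)=0$, $X(0,r_0)=r_0$, and $\int_0^\tau s^{n-4}\,ds=\tau^{n-3}/(n-3)$, which is where $n\ge4$ is used) produces
\[
	X(\tau,r_0)\le r_0\(1-\frac{|\l|\,M_0(r_0)\,\tau^{n-2}}{(n-2)(n-3)}\),
\]
whose right-hand side vanishes at $\tau_*(r_0):=\big((n-2)(n-3)/(|\l|M_0(r_0))\big)^{1/(n-2)}$.

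To conclude I would argue by contradiction. Since $a_0\not\equiv0$ we may fix $r_0$ with $M_0(r_0)>0$; if the $C^1$ solution existed beyond $\tau_*(r_0)$, then at $\tau=\tau_*(r_0)$ the last display would force $X(\tau_*(r_0),r_0)\le0$, contradicting positivity of the characteristic. Hence the solution breaks down at or before $\tau_*(r_0)$, and taking $r_0$ along a sequence with $M_0(r_0)\to\sup_{r\ge0}M_0(r)$, so that $\tau_*(r_0)\to T^*$, gives the asserted bound. I expect the only genuinely delicate points to be the positivity $X(\tau,r_0)>0$ (which relies on the radial structure making the origin an invariant characteristic, so that characteristics emanating from $r_0>0$ never reach it before the smooth solution dies) and the limiting step when the supremum defining $T^*$ is not attained; the smoothness of $\tau\mapsto X(\tau,r_0)$ and the manipulations of \eqref{eq:EPr} are immediate from the $C^1$ hypothesis, and the core estimate is just the double integration displayed above.
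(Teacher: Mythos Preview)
Your proof is correct and follows essentially the same route as the paper: reduce to the characteristic ODE $X''=\lambda\tau^{n-4}m_0(R)/X^{n-1}$, use $\lambda<0$ to get $X''\le0$ and hence $X\le R$, feed this back to obtain $X''\le-|\lambda|RM_0(R)\tau^{n-4}$, integrate twice, and take the supremum over $R$. Your write-up is in fact slightly more careful than the paper's about the positivity $X>0$ and the limiting step when $\sup_r M_0(r)$ is not attained.
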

\begin{theorem}[\cite{ELT-IUMJ}, Theorem 5.10]\label{thm:main3}
Let $\l>0$ and $n=4$.
The radial $C^1$-solution $(a,v)$ to \eqref{eq:EPr} is global if and only if
the initial amplitude $a_0\in C^1$ satisfies 
\[
	|a_0(r)|^2 \ge 2 M_0(r) = \frac{2}{r^4}\int_0^r |a_0(s)|^2 s^3 ds
\]
for all $r \ge 0$. In particular, if $a_0 \in L^2(\R^4)$ then the solution breaks down in finite time. 
Moreover, the critical time is given by
\[
	\tau_c = \(\frac{2}{\l \max_{r  > 0} \( 2M_0(r) - |a_0(r)|^2\)}\)^{\frac12}.
\]
\end{theorem}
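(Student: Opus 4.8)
The plan is to adapt the Lagrangian/characteristic argument of \cite{ELT-IUMJ}, which becomes particularly clean here because for $n=4$ one has $\tau^{n-4}=1$, so \eqref{eq:EPr} is precisely the \emph{autonomous} radial compressible Euler--Poisson system. First I would integrate the Poisson equation $\d_r(r^3\d_r V_{\mathrm{p}})=\rho r^3$ to get $\d_r V_{\mathrm{p}}=m/r^3$ with $m(\tau,r):=\int_0^r\rho(\tau,s)s^3\,ds$, and record that, writing the continuity equation in the divergence form of \eqref{eq:EPr}, the quantity $M(\tau):=m(\tau,X(\tau))$ is conserved along any characteristic $\dot X=v(\tau,X)$, since $\frac{d}{d\tau}M=\rho\,X^3(\dot X-v)=0$ (the boundary term at $r=0$ vanishes because $v$ is radial and $C^1$). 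Hence $m(\tau,X(\tau))=m_0(x_0):=\int_0^{x_0}\abs{a_0(s)}^2s^3\,ds=x_0^4M_0(x_0)$, and the momentum equation reduces to the scalar central-force ODE
\[
	\ddot X(\tau)=\la\, m_0(x_0)\,X(\tau)^{-3},\qquad X(0)=x_0,\quad \dot X(0)=v(0,x_0)=0 .
\]

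Second, and this is where $n=4$ is special, I would solve this ODE in closed form: the energy integral $\dot X^2=\la m_0(x_0)\bigl(x_0^{-2}-X^{-2}\bigr)$ together with separation of variables gives $\sqrt{X^2-x_0^2}=\sqrt{\la m_0(x_0)}\,\tau/x_0$, i.e.
\[
	X(\tau;x_0)^2=x_0^2\bigl(1+\la M_0(x_0)\tau^2\bigr).
\]
Differentiating in $x_0$ and using the elementary identity $rM_0'(r)=\abs{a_0(r)}^2-4M_0(r)$, the Jacobian $J(\tau;x_0):=\d_{x_0}X(\tau;x_0)$ of the Lagrangian flow satisfies $2X J=x_0\bigl[\,2+\la\tau^2\bigl(\abs{a_0(x_0)}^2-2M_0(x_0)\bigr)\bigr]$, so, since $x_0,X>0$,
\[
	J(\tau;x_0)>0\iff 1+\tfrac{\la\tau^2}{2}\bigl(\abs{a_0(x_0)}^2-2M_0(x_0)\bigr)>0 .
\]

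Third, I would turn this into the stated dichotomy. The radial $C^1$ solution $(a,v)$ exists exactly as long as the map $x_0\mapsto X(\tau;x_0)$ is a $C^1$-diffeomorphism of $[0,\I)$, i.e. as long as $\inf_{x_0}J(\tau;x_0)>0$; on that interval one recovers $v=\dot X$ and $\rho=\rho_0\,x_0^3/(X^3J)$ as genuine $C^1$ functions, and if $J\to0$ along some characteristic then $\rho$ and $\d_r v=\d_{x_0}\dot X/J$ blow up. Now: if $\abs{a_0(r)}^2\ge 2M_0(r)$ for all $r$, the bracket above is $\ge1$ for every $\tau\ge0$ and $x_0$, so $J$ never vanishes and the solution is global. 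Conversely, put $\ka:=\sup_{r>0}\bigl(2M_0(r)-\abs{a_0(r)}^2\bigr)>0$: for $\tau^2<2/(\la\ka)$ one has $1-\tfrac{\la\tau^2}{2}(2M_0(x_0)-\abs{a_0(x_0)}^2)\ge 1-\tfrac{\la\tau^2}{2}\ka>0$ uniformly in $x_0$, so nothing breaks down before $\tau_c:=\bigl(2/(\la\ka)\bigr)^{1/2}$, while along characteristics issued from points where $2M_0-\abs{a_0}^2$ is close to its supremum $J$ vanishes as $\tau\uparrow\tau_c$; hence the breakdown time equals $\tau_c$. Finally, for $a_0\in L^2(\R^4)$ the global condition must fail: if $\abs{a_0(r)}^2\ge2M_0(r)$ held everywhere, then $G(r):=\int_0^r\abs{a_0(s)}^2s^3\,ds$ would satisfy $G'(r)=\abs{a_0(r)}^2r^3\ge 2G(r)/r$, so $r\mapsto G(r)/r^2$ would be nondecreasing and $G(r)\ge (G(r_1)/r_1^2)\,r^2\to\I$ for any $r_1$ with $G(r_1)>0$ (such $r_1$ exists since $a_0\not\equiv0$), contradicting the bound $G(r)\le \norm{a_0}_{L^2(\R^4)}^2/(2\pi^2)$.

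The main obstacles are the two ``soft'' steps, not the computation. First, the rigorous equivalence between $C^1$-breakdown of the Eulerian solution and the first vanishing of $J$: the forward direction (reconstructing a $C^1$ solution from a nonsingular Lagrangian flow) is standard but requires $a_0\in C^1$ so that $m_0\in C^1$ and $X(\tau;\cdot)$ is $C^1$, together with care at the origin $r=0$ and a uniform lower bound $\inf_{x_0}J>0$ on subintervals. Second, the degenerate case in which the supremum defining $\ka$ is attained only at points where $\abs{a_0}$ vanishes, or is not attained at all; this is handled by the limiting argument over nearby characteristics indicated above, together with the observation that $\d_r v$ still becomes unbounded as $\tau\uparrow\tau_c$. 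Everything else is the explicit ODE bookkeeping displayed above.
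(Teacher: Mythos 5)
Your proposal is correct and follows essentially the same route as the paper: conservation of mass along characteristics, the closed-form Lagrangian flow $X^2=R^2(1+\lambda M_0\tau^2)$ for $n=4$, and the sign of the Jacobian $\partial X/\partial R$ as the global-existence criterion, with the same $L^2$ argument via monotonicity of $m_0(r)/r^2$. The extra remarks you add about reconstructing the Eulerian $C^1$ solution from the flow and handling degenerate maximizers fill in details the paper simply cites from \cite{ELT-IUMJ}, but the core argument is identical.
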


\begin{example}[An example of finite-time breakdown]\label{ex:blowup}
Consider the equation \eqref{eq:EPr}. 
Let $n=4$ and $\l>0$.
Suppose 
\[
	a_0(x) = a_0(r) = r^{-\frac{5}{2}} e^{-\frac{1}{2r}}.
\]
Then, an elementary calculation shows
\[
	M_0(r) = r^{-4} e^{-\frac{1}{r}}
\]
and so that $2M_0(r)-|a_0|^2 = (2r-1)r^{-5} e^{-\frac{1}{r}}$
takes its maximum at $r_0=(7+\sqrt{17})/16=0.6951 \dots$
which is the root of
\[
	8r_0^2 -7r_0 +1 = 0.
\]
By Theorem \ref{thm:main3}, the critical time is
\[
	\tau_c = \(\frac{2}{|\l| \( 2M_0(r_0) - |a_0(r_0)|^2\)}\)^{\frac12} =
\sqrt{\frac{3405+827\sqrt{17}}{\l 2^{13}}} e^{\frac{3(7-\sqrt{17})}{32}}.
\]
In fact, the solution to \eqref{eq:EPr} is given by
\begin{align*}
	|a|^2(\tau,X(t,R)) ={}& \frac{2 R^2}
	{X^2(\tau,R) (2R^5e^{\frac{1}{R}} - \l(2R-1) \tau^2)
	}, \\
	v(\tau,X(\tau,R)) ={}& \frac{\l \tau}{X(\tau,R) R^2 e^{\frac{1}{R}}}, 
\end{align*}
where
\[
	X(\tau,R) = R\sqrt{1+\l  R^{-4}e^{-\frac{1}{R}} \tau^2}.
\]
At the time $\tau=\tau_c$, the characteristic curves ``touch'' at $r=r_c:=X(t_c,r_0)$,
that is, we have $(\partial_R X)(\tau_c,r_0)=0$, which is one of 
the sufficient and necessary
condition for finite-time breakdown (see, \cite{ELT-IUMJ}).
More explicitly, we can see that, as $\tau$ tends to $\tau_c$, the amplitude $|a|^2$ blows up at
$r_c$ since the denominator
\[
	2r_0^5e^{\frac{1}{r_0}} - \l(2r_0-1) t^2
\]
tends to zero as $\tau \to \tau_c$.
We illustrate the calculation in Remark \ref{rem:indicator}.

In this example, $a_0 \in H^\I \subset C^1$ and so
Theorem \ref{thm:main} holds.
We see that, however, the asymptotics \eqref{eq:asymptotics} is valid only for $\eps^{\frac{\al}{\g}}/(1-t)<\tau_c$ and
it cannot hold for $\eps^{\frac{\al}{\g}}/(1-t) \ge \tau_c$.
\end{example}

The rest of the paper is organized as follows.
In Section \ref{sec:knownresults},
we make a summary of the results in this paper with previous results.
Section \ref{sec:pre} is devoted to preliminary results.
We prove Theorem \ref{thm:main} in Section \ref{sec:proof}.
The strategy of the proof is illustrated rather precisely in Section \ref{subsec:strategy}.
In Section \ref{sec:sscase}, we treat the supercritical WKB case $\al <1$ 
(Theorem \ref{thm:sscase}).
The well-prepared data is discussed in Section \ref{subsec:wpdata}
Finally, we prove Theorems \ref{thm:main2} and \ref{thm:main3} in
Section \ref{sec:blowup}.

\section{Summary}\label{sec:knownresults}
\subsection{Cascade of phase shifts in the linear WKB case}\label{subsec:summary}
We first discuss about the linear WKB case $\al>1$.
According to Theorems \ref{thm:main}, \ref{thm:main2}, and \ref{thm:main3}, 
we summarize the result in this case as follows.
Recall several boundary layers:
\begin{equation}\label{eq:layerlist}
\begin{aligned}
\text{initial time:}& & t&{}=0 
&&\longleftrightarrow  1-t= 1
,\\
\text{first layer:}& & t&{}=1-\Lambda_1\eps^{\frac{\al-1}{\g-1}}
&& \longleftrightarrow  1-t = \Lambda_1\eps^{\frac{\al-1}{\g-1}}
,\\
J\text{-th layer:}& & t&{}=1-\Lambda_J\eps^{\frac{J\al-1}{J\g-1}}
&& \longleftrightarrow 1-t = \Lambda_J\eps^{\frac{J\al-1}{J\g-1}}
,\\
\text{final layer:}& & t&{}=1-T^{-1}\eps^{\frac{\al}{\g}}
&& \longleftrightarrow  1-t = T^{-1}\eps^{\frac{\al}{\g}}
,
\end{aligned}
\end{equation}
where $\Lambda_J$ and $T$ are positive constants.
\smallbreak

$\bullet$ {\bf From the initial time to the first layer}.
Before the first layer, that is, for $1-t \gg \eps^{\frac{\al-1}{\g-1}}$,
the behavior of the solution is the same as in the linear case at leading order.
Indeed, the asymptotics \eqref{eq:asymptotics} implies that the solution behaves like 
$v^\eps_{\text{lin}}$ defined in \eqref{eq:linear} 
for $1-t \gg \eps^{\frac{\al-1}{\g-1}}$.
The phase shifts disappear: From \eqref{def:Phi} and the time expansion 
$\phi_0(\tau) \asymp \sum_{j=1}^\I \tau^{\g j -1} \varphi_j$, we see
\[
	\norm{\Phi^{\eps}(t)}_{L^\I(\R^n)}
	\le C \eps^{\frac{\al}{\g}-1}
	\sum_{j=1}^\I  \(\frac{\eps^{\frac{\al}{\g}}}{1-t}\)^{\g j-1} 
	\norm{\varphi_j}_{L^\I(\R^n)} \ll 1
\]
if $1-t \gg \eps^{\frac{\al-1}{\g-1}}$.
Moreover, the amplitude tends to a rescaling of $a_0$:
By \eqref{def:A}, 
\[
	A^\eps(t,x) \to  b_0\(0,\frac{x}{1-t} \)
	\exp \(i\phi_{\mathrm{equ}}\(0,\frac{x}{1-t} \)\) = a_0\( \frac{x}{1-t}\)
\]
since $\eps^{\al/\g}/(1-t) \ll 1$ 
for $1-t \gg \eps^{\frac{\al-1}{\g-1}}$.
This agrees with the analysis in \cite{MaAHP}.
On the other hand, on the first layer $1-t = \Lambda_1 \eps^{\frac{\al-1}{\g-1}}$
the nonlinear effect becomes relevant.
The term ${\tau^{\g-1} \varphi_1}$ in the sum\footnote{ 
This sum is the formal one:
Here, we say ``sum'' in the sense that
$\phi_0(\tau) \asymp \sum_{j=1}^\I \tau^{\g j -1} \varphi_j$.
}
$\sum_{j=1}^\I \tau^{\g j -1} \varphi_j$ is no longer negligible.
\smallbreak

$\bullet$ {\bf From the first layer to the $J$-th layer}.
Soon after the first layer, the solution becomes strongly oscillatory by
${\tau^{\g-1}\varphi_1}$. 
Between the first and the second layers,
only ${\tau^{\g-1}\varphi_1}$ is effective because
\begin{multline*}
	\norm{\Phi^{\eps}(t)-
	\eps^{\frac{\al}{\g} -1 }\(\frac{\eps^{\frac{\al}{\g}}}{1-t}\)^{\g-1}
	\varphi_1\(\frac{\cdot}{1-t} \) }_{L^\I(\R^n)} \\
	\le \eps^{\frac{\al}{\g}-1}
	\sum_{j=2}^\I  \(\frac{\eps^{\frac{\al}{\g}}}{1-t}\)^{\g j-1} 
	\norm{\varphi_j}_{L^\I(\R^n)} \ll 1
\end{multline*} 
holds if $1-t \gg \eps^{\frac{2\al-1}{2\g-1}}$.
When we reached to the second layer $1-t = \Lambda_2 \eps^{\frac{2\al-1}{2\g-1}}$, the phase $\tau^{2\g-1}\varphi_2$ become relevant.
Similarly, at each $J$-th layer $1-t = \Lambda_J \eps^{\frac{J\al-1}{J\g-1}}$ new phase $\varphi_J$ becomes relevant.
This is the cascade of phase shift.
In this regime, $\eps^{\frac{\al}{\g}}/(1-t)$ converges to zero as $\eps \to 0$, 
and so that the amplitude still stays the linear one.
\smallbreak

$\bullet$ {\bf On the final layer}.
After a countable number of boundary layers, we reach to the final layer.
In this layer, $\eps^{\frac{\al}{\g}}/(1-t)$ does not tend to zero any longer.
Therefore, the asymptotics of amplitude changes into a nonlinear one.
It turns out that the ratio $T:=\eps^{\al/\g}/(1-t)$ plays the crucial role.
If $T \ge 0$ is small, then the asymptotic behavior of the solution is described by
the asymptotics \eqref{eq:asymptotics}.
It is essential to use $\Phi^\eps$ and $A^\eps$ defined
in \eqref{def:Phi} and \eqref{def:A}, respectively.
Thanks to the nontrivial remainder term given by $\phi_{\mathrm{equ}}$, 
\eqref{eq:asymptotics} gives the asymptotic behavior of the solution on the final layer.

On the other hand, \eqref{eq:asymptotics} breaks down at $T=T^*<\I$ 
in several cases (Theorems \ref{thm:main2} and \ref{thm:main3}).
It is because the nonlinear effects cause a formation of singularity.
These focal points are moving:
If the focal point is 
\[
	\(\frac{\eps^{\frac{\al}{\g}}}{1-t_c}, \frac{x_c}{1-t_c}\) = ( T^*,X^*),
\]
then, in the $(t,x)$-coordinates, this focal point is
\begin{align*}
	t_c =&{} 1-(T^*)^{-1} \eps^{\frac{\al}{\g}}, &
	x_c =&{} X^*(T^*)^{-1} \eps^{\frac{\al}{\g}},
\end{align*}
which tends to $(t,x)=(1,0)$ as $\eps\to0$.
Example \ref{ex:blowup} gives an example of this type blowup.
\smallbreak

$\bullet$ {\bf After the final layer}.
It remains open what happens after the breakdown of \eqref{eq:asymptotics}.
However, we can at least expect that more rapid oscillations might not appear after the final layer.
This is because, in the case of $\l>0$, $2\le \g< 4$, and $\al\in]\max(\g/2,1),\g[$,
the order of the upper bound of $\Lebn{J^\eps u^\eps}{2}$ stays $\eps^{1-\al/\g}$ even after the caustic (\cite{MaAHP}).
Recall that  $J^\eps = e^{i|x|^2/2\eps(t-1)}i(t-1)\nabla e^{-i|x|^2/2\eps(t-1)}$
filters out the main quadratic phase.
Therefore, this divergent upper bound implies that
the order of magnitude of energy of the oscillation
\emph{other than the main quadratic phase}
is at most $\eps^{1-\al/\g}$.

\subsection{Cascade of phase shifts in the nonlinear and supercritical WKB case}
\label{subsec:summary2}
We now turn to the case $\al < 1$, the supercritical WKB case.
Our goal is to obtain a WKB-type approximation similar to \eqref{eq:asymptotics},
which explains the cascade of phase shifts and gives the asymptotic profile
of the solution before and on the final layer.
We \emph{do not} use modified the initial data \eqref{eq:wpdata}
and keep working with \eqref{eq:r4}--\eqref{eq:odata}.
As stated above, the analysis of 
 \eqref{eq:r4}--\eqref{eq:odata}
is reduced to the analysis of \eqref{eq:sys} via the transform \eqref{eq:spct}.
Recall that $h=\eps^{1-\al/\g}$.
The difficulty is the following:
\begin{enumerate}
\item The equaiton \eqref{eq:sys} itself depends on $h$ through
the term $i\frac{h}{2}\Delta a^h$.
\item The initial time of \eqref{eq:sys} tends to $\tau=0$ at a speed
$h^{\frac{\al}{\g-\al}}$.
\end{enumerate}
The second is the main point
of the supercritical case $\alpha<1$.
In this section, we discuss by heuristic arguments
 what happens,
what is the problem, and how we can overcome it.
Then, it turns out that the situation becomes more complicated,
and the cascade of phase shifts phenomena involves more phase shifts
and boundary layers than the linear WKB case.
The rigorous result is in Theorem \ref{thm:sscase}
(see, also Remark \ref{rmk:type}).
\smallbreak

$\bullet$ {\bf The transpose of the initial time and the first boundary layer}.
We see from \eqref{eq:layerlist} that the relation $\al<1$
causes the transpose
of the initial time $t=0$ and the first boundary layer, that is,
the initial time $t=0$ lies beyond the first boundary layer $t =1-\Lambda_1 \eps^{\frac{\al-1}{\g-1}}$ because $\Lambda_1 \eps^{\frac{\al-1}{\g-1}} \gg 1$ for small $\eps$.
It means that there is no linear regime and the behavior of the solution involves
nonlinear effects at leading order soon after the initial time.
\smallbreak

$\bullet$ {\bf The nonlinear behavior of the solution at the initial time}.
As a matter of fact, the behavior of $u^\eps$ is already ``nonlinear'' at $t=0$.
More precisely, the principal part of the phase shift of the solution $u^\eps$
other than $\exp (i |x|^2/2\eps(t-1))$
(we call this as a \emph{principal nonlinear phase} of $u^\eps$) for $\al < 1$ is given by
\begin{equation}\label{eq:Pphase}
	\exp \( i \eps^{\frac{\al}{\g}-1} \phi_0\(\frac{\eps^{\frac{\al}{\g}}}{1-t},\frac{x}{1-t}\) \),
\end{equation}
where $\phi_0$ is the solution of \eqref{eq:lsys}.
Here we remark that the principal nonlinear phase is the same as in the case $\al>1$
(In Theorem \ref{thm:main}, we denote this by $\exp (i\Phi^\eps)$).
Since $\phi_0$ is given as a solution of \eqref{eq:lsys},
the shape of $\phi_0$ is completely independent of $\al$.
Moreover, the choice of $\phi_0$ is natural because the function $\phi_0$
is the unique limit of the phase function $\phi^h$
which solves \eqref{eq:sys} with $a^h$.
Recall that $\psi^h=a^h\exp(i\phi^h/h)$ is an exact solution of \eqref{eq:r5}.
Using the expansion
$\phi_0(\tau) \asymp \sum_{j=1}^\I \tau^{\g j -1}\varphi_j(y)$,
we have
\[
	\eps^{\frac{\al}{\g}-1} \phi_0\(\frac{\eps^{\frac{\al}{\g}}}{1-t}\)
	\sim O\(\eps^{\frac{\al}{\g}-1} \(\frac{\eps^{\frac{\al}{\g}}}{1-t}\)^{\g-1}\)
\]
as long as $\eps^{\frac{\al}{\g}}/(1-t) \ll 1$.
If the right hand side is small, then the phase shift caused by nonlinear effects
is negligible and so
the solution behaves like the linear solution.
However, the right hand side is $O(\eps^{\al-1})$ at $t=0$, which
is not small if $\al<1$.
In this sense, the behavior of $u^\eps$ is nonlinear at $t=0$.
\smallbreak

$\bullet$ {\bf The initial condition as a constraint}.
On the other hand, we always have
$u^\eps(0,x)=a_0(x)\exp(-i\frac{|x|^2}{2\eps})$
since, as stated above,
we do not modify the initial data and keep working with \eqref{eq:odata}.
This initial condition seems to be
quite natural and the simplest one for this problem.
This is true if $\alpha>1$.
However, in  the supercritical case $\al<1$,
the meaning of this condition slightly changes
and this condition becomes a sort of constraint:
The appearing nonlinear effects must disappear at $t=0$.
To achieve this constraint, we need to employ some more nontrivial
phase shifts as correction terms in order to cancel out the nonlinear effect at $t=0$.
This  modification with correction terms is the heart of the matter.
The main difference between two initial data \eqref{eq:odata} and \eqref{eq:wpdata} is this point.
A use of \eqref{eq:wpdata} enables us to leave the behavior of the solution at the
initial time nonlinear.
Hence, we do not need any correction term.
\smallbreak

$\bullet$ {\bf A formal construction of correction terms}.
Intuitively, this modification is done as follows:
First, we replace $\phi_0(\tau,y)$ in \eqref{eq:Pphase}
by $\phi_0(\tau,y)-\sum_{j=1}^{k-1}\tau^{\g j-1} \varphi_j(y)$
for some $k\ge2$.
This yields the following modified principal nonlinear phase
\begin{equation}\label{eq:mPphase}
	\exp \( i \eps^{\frac{\al}{\g}-1} \left[ \phi_0\(\frac{\eps^{\frac{\al}{\g}}}{1-t},\frac{x}{1-t}\)-
	\sum_{j=1}^{k-1}\(\frac{\eps^{\frac{\al}{\g}}}{1-t}\)^{\g j-1} \varphi_j\(\frac{x}{1-t}\)
	\right] \).
\end{equation}
At $t=0$, it holds that
\[
	\eps^{\frac{\al}{\g}-1} \left[ \phi_0\(\eps^{\frac{\al}{\g}}\)-
	\sum_{j=1}^{k-1}\(\eps^{\frac{\al}{\g}}\)^{\g j-1} \varphi_j
	\right] = O(\eps^{\al k-1}).
\]
Therefore, if we take $k$ large enough then the modified approximate solution
which the above principal phase \eqref{eq:mPphase} gives
possesses the desired two properties:
The leading term of the principal nonlinear phase is the same as \eqref{eq:Pphase};
and the phase shifts tend to zero at the initial time $t=0$.
Of course, this simple modification is too rude and so 
valid only in the small neighborhood of $t=0$.
Hence, to obtain an approximation also outside the small neighborhood,
we replace each $\varphi_j(y)$ by a function
$-\phi_{\mathrm{pha},j}(\tau,y)$ which solves
a kind of $j$-th linearized system of \eqref{eq:sys} with
$-\phi_{\mathrm{pha},j}(0)=\varphi_j$.
This yields the principal nonlinear phase like
\begin{equation}\label{eq:mmPphase}
	\exp \( i \eps^{\frac{\al}{\g}-1} \left[ \phi_0\(\frac{\eps^{\frac{\al}{\g}}}{1-t},\frac{x}{1-t}\)+
	\sum_{j=1}^{k-1}\(\eps^{\frac{\al}{\g}}\)^{\g j-1}\phi_{\mathrm{pha},j}\(\frac{t\eps^{\frac{\al}{\g}}}{1-t},\frac{x}{1-t}\)
	\right] \).
\end{equation}
We note that ${t\eps^{\frac{\al}{\g}}}/(1-t)={\eps^{\frac{\al}{\g}}}/(1-t)-\eps^{\frac{\al}{\g}}$,
which is zero at $t=0$.
\smallbreak

$\bullet$ {\bf Three kinds of correction terms}.
In fact, the above modified principal nonlinear phase \eqref{eq:mmPphase}
is still insufficient.
We need two more kinds of correction terms which are essentially different
from $\phi_{\mathrm{pha},j}$.
Now, let us list all kinds of correction terms which we use:
\begin{enumerate}
\item \emph{Correction from phase}.
The first one is the above $\phi_{\mathrm{pha},j}$.
They satisfy $\phi_{\mathrm{pha},j}(0)=-\varphi_j$ and
remove the bad part of $\phi_0$.
\item \emph{Correction from amplitude}.
The amplitude $b_0$ which pairs $\phi_0$ via \eqref{eq:lsys} has the 
expansion $b_0(\tau) \asymp a_0+ \sum_{j=1}^\I \tau^{\g j} a_j$,
where $a_j$ is a space function defined by $a_0$
(see, Proposition \ref{prop:t-expansion}).
Hence, the principal part of the amplitude of $u^\eps$
(\emph{principal amplitude} of $u^\eps$) is 
\[
	b_0\(\frac{\eps^{\frac{\al}{\g}}}{1-t},\frac{x}{1-t}\)
	\sim a_0\(\frac{x}{1-t}\) + \sum_{j=1}^\I
	\(\frac{\eps^{\frac{\al}{\g}}}{1-t}\)^{\g j} a_j\(\frac{x}{1-t}\)
\]
for $\eps^{\frac{\al}{\g}}/(1-t) \ll 1$.
In particular, at $t=0$ we have
\begin{equation}\label{eq:mamplitude}
	b_0 \( \eps^{\frac{\al}{\g}}, x \) \sim a_0(x)
	+ \sum_{j=1}^{k-1} \eps^{\al j}a_j + O(\eps^{\al k}).
\end{equation}
The principal amplitude converges to the given initial data for all $\al>0$,
and so one might expect this is harmless.
However, it is understood that, when we try to get the pointwise estimate of solution
via Grenier's method, we must take $\eps^1$-term of the initial amplitude
into account because it affects (implicitly) the approximate solution at leading order
(see, \cite{AC-ARMA,CM-AA,Grenier98}).
Therefore, we must remove $\eps^{\al j}a_j$ for all $j \ge 1$ such that $\al j \le 1$
from \eqref{eq:mamplitude}, 
otherwise the approximate solution will differ outside a small neighborhood of $t=0$.
To do this, we construct $b_{\mathrm{amp},j}$
as a solution to a kind of $j$-th linearized system of \eqref{eq:sys}
with the condition $b_{\mathrm{amp},j}(0)=-a_j$.
At that time, there appear a phase correction $\phi_{\mathrm{amp},j}$
associated with $b_{\mathrm{amp},j}$ via the system
which $b_{\mathrm{amp},j}$ solves.
\item \emph{Correction from interaction}.
The third one comes from the structure of \eqref{eq:sys}.
As stated in introduction,
the problem boils down to 
determining the asymptotic behavior of the
solution $(a^h,\phi^h)$ to \eqref{eq:sys} up to $O(h^1)$.
Suppose that the solution has two terms $ (h^{p_1}b_1, h^{p_1}\phi_1)$ and 
$ (h^{p_2}b_2, h^{p_2}\phi_2)$ in its asymptotic expansion as $h \to 0$.
Then, the quadratic terms in \eqref{eq:sys} produce nontrivial $h^{p_1+p_2}$-terms.
For example, 
$\nabla \phi^h \cdot \nabla a^h$ has the terms
$h^{p_1+p_2}\nabla \phi_1 \cdot \nabla b_2$ and 
$h^{p_1+p_2}\nabla \phi_2 \cdot \nabla b_1$ in its expansion.
Again by \eqref{eq:sys}, this implies that $(\partial_t b^h,\partial_t \phi^h)$
(and so $(b^h, \phi^h)$ itself) also contains a $h^{p_1+p_2}$-term in its expansion.
Repeating this argument, we see that $(b^h, \phi^h)$ has
$h^p$-terms for all $p$ given by $p=lp_1+mp_2$ with integers $l,m \ge0$
such that $l+m \ge 1$.
So far, we have already obtained two kinds of correction terms, $\phi_{\mathrm{pha},j}$
and $\phi_{\mathrm{amp},j}$.
Therefore, they interact each other to produce the third correction terms $\phi_{\mathrm{int},j}$.
\end{enumerate}
\smallbreak

$\bullet$ {\bf The supercritical cascade of phase shifts}.
We are now in a position to understand the cascade of phase shifts phenomena
in the supercritical case.
With the above three kinds of correction terms, we can describe the asymptotic
behavior of the solution before the final layer.
Recall that, in the linear WKB case $\al>1$, the principal nonlinear phase is 
given by only one phase function $\phi_0(\tau,y)$,
and the notion of boundary layer comes from its expansion with respect to $\tau$
around $\tau=0$.
In this case, not only $\phi_0$ but also all 
$\phi_{\mathrm{pha},i}$, $\phi_{\mathrm{amp},j}$, $\phi_{\mathrm{int},k}$
produce a countable number of similar boundary layers.
Thus, the cascade of phase shifts involves much more 
phase shifts and boundary layers than the linear WKB case.
\smallbreak

$\bullet$ {\bf One more correction at the final boundary layer}.
So far, the second difficulty listed in the beginning of this section is solved
by three kinds of correction terms $\phi_{\mathrm{pha},i}$,
$\phi_{\mathrm{amp},j}$, and $\phi_{\mathrm{int},j}$.
We can describe with them the asymptotic behavior of the solution before the final layer.
To give it also on the final layer, we need one more correction term,
$\phi_{\mathrm{equ}}$, as in the case $\alpha > 1$.
This correction term defeats the first difficulty.
This solves \eqref{eq:tbtp1}--\eqref{eq:tbtp2} and so it is independent of $\al$.
When $\al=1$, $\phi_{\mathrm{equ}}$ changes in to
a solution of \eqref{eq:tbtp1}--\eqref{eq:tbtp3} (see Theorem \ref{thm:main}).
We next address why we need a modified $\phi_{\mathrm{equ}}$.  
\smallbreak

$\bullet$ {\bf Resonance of correction terms and the nonlinear WKB case}.
Some of the correction terms associated with 
$\phi_{\mathrm{pha},i}$, $\phi_{\mathrm{amp},j}$, $\phi_{\mathrm{int},k}$,
and $\phi_{\mathrm{equ}}$ may have the same order.
This phenomena is a kind of resonance.
The nonlinear case $\al=1$ is the simplest example.
In this case, it happens that $\phi_{\mathrm{pha},1}$ and  $\phi_{\mathrm{equ}}$ have the same order.
Recall that if $\al=1$ then we work with the modified $\ti{\phi}_{\mathrm{equ}}$ solving
\eqref{eq:tbtp1}--\eqref{eq:tbtp3},
\begin{equation*}
	\(
	\begin{aligned}
		&\d_\tau \ti{b}_{\mathrm{equ}}
		+ \nabla \ti{\phi}_{\mathrm{equ}} \cdot \nabla b_0
		+ \nabla \phi_0 \cdot \nabla \ti{b}_{\mathrm{equ}}
		+  \frac{1}{2} \ti{b}_{\mathrm{equ}} \Delta \phi_0
		+ \frac{1}{2} b_0 \Delta \ti{\phi}_{\mathrm{equ}}
		= \frac{i}{2} \Delta b_0, \\
		&\d_\tau \ti{\phi}_{\mathrm{equ}}
		+ \nabla \phi_0 \cdot  \nabla \ti{\phi}_{\mathrm{equ}}
		+ \l \tau^{\g-2} (|y|^{-\g}* 2 \Re \overline{b_0} \ti{b}_{\mathrm{equ}} ) = 0, \\
		&	\ti{b}_{\mathrm{equ}|\tau=0} \equiv 0, \quad \ti{\phi}_{\mathrm{equ}|\tau=0} = \l(|y|^{-\g}*|a_0|^2)/(\g-1)=-\varphi_1
	\end{aligned}
	\right. 
\end{equation*}
(see Theorem \ref{thm:main}).
We will see later that $\phi_{\mathrm{pha},1}$ is the solution to
\begin{equation*}
	\(
	\begin{aligned}
		&\d_\tau b_{\mathrm{pha},1} + \nabla \phi_{\mathrm{pha},1} \cdot \nabla b_0
		+ \nabla \phi_0 \cdot \nabla b_{\mathrm{pha},1}
		+  \frac{1}{2} b_{\mathrm{pha},1} \Delta \phi_0
		+ \frac{1}{2} b_0 \Delta \phi_{\mathrm{pha},1} =0, \\
		&\d_\tau \phi_{\mathrm{pha},1} + \nabla \phi_0 \cdot  \nabla \phi_{\mathrm{pha},1}
		+ \l \tau^{\g-2} (|y|^{-\g}* 2 \Re \overline{b_0} b_{\mathrm{pha},1} ) = 0, \\
		&	b_{\mathrm{pha},1|\tau=0} \equiv 0, \quad \phi_{\mathrm{pha},1|\tau=0} = \l(|y|^{-\g}*|a_0|^2)/(\g-1)=-\varphi_1
	\end{aligned}
	\right. 
\end{equation*}
(see Remark \ref{rmk:type}).
Let $\phi_{\mathrm{equ}}$ be a solution to \eqref{eq:tbtp1}--\eqref{eq:tbtp2}.
One can check  from above systems that
	$\ti{\phi}_{\mathrm{equ}}=\phi_{\mathrm{equ}}+\phi_{\mathrm{pha},1}$.
Therefore, the modified correction term $\ti{\phi}_{\mathrm{equ}}$ 
is nothing but the superposition of the (usual) correction from equation $\phi_{\mathrm{equ}}$
and the correction from phase $\phi_{\mathrm{pha},1}$.

\section{Preliminary results}\label{sec:pre}
\subsection{Properties of the $Y^s_{p,q}(\R^n)$ space}\label{subsec:spaceY}
We first collect some facts about the space $Y^s_{p,q}$ defined in \eqref{def:Y}-\eqref{def:Y2}
(see, also \cite{AC-SP,CM-AA}).
\begin{enumerate}
\item $Y^{s_1}_{p,q} \subset Y^{s_2}_{p,q}$ if $s_1 \ge s_2$.
\item $Y^s_{p,q}=Y^s_{p,[\min(q,2^*),\I]}$ and so $Y^s_{p,q_1} \subset
Y^s_{p,q_2}$ if $q_1 \le q_2$.
\item If $q<n$ then $Y^s_{p,q}=Y^s_{[\min(p,q^*),\I],q}$.
It implies $Y^s_{p_1,q} \subset
Y^s_{p_2,q}$ for $p_1 \le p_2$ under $q<n$.
In particular, $Y^s_{p,q} \subset Y^s_{[2^{**},\I],[2^*,\I]}$ if $n \ge 5$,
where $2^{**} := (2^*)^*=2n/(n-4)$.
\item If $n \ge 3$ then any function $f \in X^s$ is written uniquely as $f=g + c$,
where $g \in Y^s_{\I,2}(=Y^s_{2^*,2})$ and $c$ is a constant.
\end{enumerate}
The first property is obvious by definition.
The others follow from the following lemma
which is a consequence of the Hardy-Littlewood-Sobolev inequality
and found in \cite[Th.~4.5.9]{Hormander1} or \cite[Lemma~7]{PGAHPANL}:
\begin{lemma}\label{lem:HPG}
If $\varphi \in \mathcal{D}^{'}(\R^n)$ is such that $\nabla \varphi \in L^p (\R^n)$ for $p \in ]1,n[$,
then there exists a constant $c$ such that $\varphi-c \in L^q (\R^n)$, with $1/p=1/q+1/n$.
\end{lemma}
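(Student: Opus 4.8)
The plan is to recover $\varphi$, up to an additive constant, as a Riesz potential of its own gradient, and then invoke the Hardy--Littlewood--Sobolev (HLS) inequality. Write $q$ for the exponent determined by $1/q=1/p-1/n$ (so $q=p^*$, the Sobolev conjugate), which is finite and larger than $p$ precisely because $1<p<n$. Let $G(x)=c_n\abs{x}^{2-n}$ be the Newtonian kernel on $\R^n$ ($n\ge 3$), normalized so that $-\Delta G=\delta_0$ in $\mathcal{D}^{'}(\R^n)$; then $\nabla G(x)=c_n(2-n)\,x/\abs{x}^n$ is locally integrable, homogeneous of degree $1-n$, and each component obeys the pointwise bound $\abs{\partial_jG(x)}\le C\abs{x}^{-(n-1)}$. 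The candidate primitive is
\[
	w:=-\sum_{j=1}^n (\partial_jG)*(\partial_j\varphi),
\]
the convolution of this vector kernel with the vector field $\nabla\varphi\in L^p(\R^n)$.

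First I would check that $w$ is a well-defined function lying in $L^q(\R^n)$. Since $\abs{w(x)}\le C\sum_j\int_{\R^n}\abs{x-y}^{-(n-1)}\abs{\partial_j\varphi(y)}\,dy\le C\,I_1(\abs{\nabla\varphi})(x)$, where $I_1$ is the scalar Riesz potential of order $1$, the HLS inequality (valid exactly in the range $1<p<n$) shows that the defining integral converges absolutely for a.e.\ $x$ and that $\Lebn{w}{q}\le C\Lebn{\nabla\varphi}{p}$. In particular $w$ is locally integrable, hence defines a distribution.

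The heart of the matter is the identity $\nabla(\varphi-w)=0$ in $\mathcal{D}^{'}(\R^n)$. One clean route is to prove $\varphi=w$ first for $\varphi\in\mathcal{S}(\R^n)$, where it is the elementary computation $\varphi=(-\Delta G)*\varphi=-\sum_j\partial_j\big((\partial_jG)*\varphi\big)=-\sum_j(\partial_jG)*(\partial_j\varphi)$, and then to pass to the general case by mollification: with $\rho_\eps$ a standard mollifier, $\nabla(\varphi*\rho_\eps)=(\nabla\varphi)*\rho_\eps\to\nabla\varphi$ in $L^p$, so by the continuity of $\nabla\varphi\mapsto w$ from $L^p$ to $L^q$ (again HLS) and the continuity of $\nabla$ on $\mathcal{D}^{'}$ one gets $\nabla w=\nabla\varphi$ in $\mathcal{D}^{'}$. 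Alternatively one pairs $\partial_kw$ directly with a test function $\chi\in C_0^\I(\R^n)$, uses Fubini (legitimate thanks to the $L^p$--$L^{q'}$ integrability just obtained together with the compact support of $\chi$) to rewrite the pairing as an integral of $\partial_j\varphi$ against a smooth function built from the kernel and $\chi$, integrates by parts once to move $\partial_j$ off $\varphi$, and collapses the outcome using $\sum_j\partial_j(\partial_jG)=\Delta G=-\delta_0$; here the only genuinely delicate point is the distributional identity that $\operatorname{div}(x/\abs{x}^n)$ is a constant multiple of $\delta_0$, i.e.\ the singular behaviour of the kernel at the origin. I expect this second step --- recognizing $\nabla\varphi$ convolved with the Riesz kernel and then differentiated as $\nabla\varphi$ again, for objects none of which has compact support --- to be the main technical obstacle, which is exactly why HLS is invoked not merely for the $L^q$ conclusion but already to license the Fubini manipulations.

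Finally, a distribution on the connected open set $\R^n$ with vanishing gradient is constant, so $\varphi-w\equiv c$ for some $c\in\R$, whence $\varphi-c=w\in L^q(\R^n)$ with $1/p=1/q+1/n$. This is the assertion of the lemma.
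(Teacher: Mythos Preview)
The paper does not prove this lemma; it simply records it as a known consequence of the Hardy--Littlewood--Sobolev inequality, citing H\"ormander and G\'erard for the details. Your argument---recovering $\varphi$ up to a constant as the Riesz potential of $\nabla\varphi$ and invoking HLS for the $L^q$ bound---is precisely that standard proof, so there is nothing to compare.
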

We take a function $f \in Y^s_{p,q}$.
Then, the indices $p$ and $q$ almost indicate the decay rates at spacial infinity
of the function $f$ and its first derivative $\nabla f$, respectively.
The second property means that $\nabla f$ is always bounded and
decays at the spacial infinity so fast that $\nabla f \in L^{2^*}$.
What the third property says is that $f$ has a similar decay property.
It can be said from the fourth property that $Y^s_{\I,2} = X^s$ in a sense,
provided $n \ge 3$.
Note that every $g \in Y^s_{\I,2}$ satisfies $g \to 0$ as $|x| \to \I$ by definition \eqref{def:Y}-\eqref{def:Y2}.
On the other hand, elements of $X^s$ do not necessarily tend to zero at the spacial infinity.
We also note that if $q=2$ then we have another definition of $Y^s_{p,2}$:
\begin{equation}\label{def:Y3}
	Y^s_{p,2}(\R^n) = L^p (\R^n) \cap X^{s}(\R^n),
\end{equation}
which makes sense for $s>n/2$.
\subsection{Basic existence and approximation results}
\label{subsec:existence}
Operate $\nabla$ to the equation for $\phi^h$ in
\eqref{eq:sys} and put $w^h:=\nabla \phi^h$.
For our further application, we generalize the system slightly.
Let 
\begin{align}
	Q_1(a,v) & {} = - (v\cdot \nabla) a -\frac12 a \nabla \cdot v, \label{eq:q-1}\\
	Q_2(v_1,v_2) & {} = - (v_1 \cdot \nabla)v_2, \label{eq:q-2}\\
	Q_3(a_1,a_2) & {} = -\l \nabla(|x|^{-\g} * (a_1\overline{a_2})), \label{eq:q-3}
\end{align}
and consider a system of the following form:
\begin{equation}\label{eq:system1}
\left\{
  \begin{aligned}
	\d_t b^h ={}& c_1^h Q_1(b^h, w^h)  + Q_1(B_1^h,w^h) + Q_1(b^h,W_1^h)
	 +R_1^h + ir^h \Delta b^h, \\
	\d_t w^h ={}& c_2^h Q_2(w^h,w^h)   + Q_2(W_2^h,w^h) + Q_2(w^h,W_2^h) \\
	&{}  + f^h(t) \( c_2^h Q_3(b^h,b^h) + Q_3(B_2^h,b^h) + Q_3(b^h,B_2^h)\)
	+ R_2^h, 
  \end{aligned}
\right.
\end{equation}
\begin{align}\label{eq:system2}
	b^h_{|t=0} &= b_0^h, &
	w^h_{|t=0} &= w_0^h.
\end{align}
where $b^h$ takes complex value and $w^h$ takes real value.
Other notation will be made precise in Assumptions \ref{asmp:existence} and \ref{asmp:existence-coef}, below.
In \cite{CM-AA}, the existence of a unique solution 
for this kind of system is shown with explicit coefficients.
We shall summarize the parallel result.

\begin{assumption}[initial data]\label{asmp:existence}
Let $n \ge 3$ and $\max(n/2-2,0) < \gamma \le n-2$.
We suppose the following conditions with some $s>n/2+1$:
The initial amplitude $b_0^h \in H^s(\R^n)$ and the initial velocity
$w_0^h \in Y^{s+1}_{q_0,2}(\R^n)$ for some $q_0 \in ]n/(\gamma+1),n[$,
uniformly for $h \in [0,1]$, that is,
there exists a constant $C$ independent of $h$
such that $\tSobn{b_0^h}{s} + \tnorm{w_0}_{Y^{s+1}_{q_0,2}(\R^n)} \le C$.
\end{assumption}
\begin{assumption}[coefficients]\label{asmp:existence-coef}
Let $c_1^h$ and $c_2^h$ be complex constants bounded uniformly in $h$,
and let $r^h$ be a real constant bounded uniformly in $h$.
Suppose for some $T^*>0$ and $s>n/2+1$ that
$f^h$ is a real-valued function of time and $f^h \in L^1((0,T^*))$;
$B_i^h$ and $R_1^h$ are complex-valued functions of spacetime, and
$B_i^h \in L^1((0,T^*); H^{s+1})$ and $R_1^h\in L^1((0,T^*); H^{s})$;
$W_i^h$ and $R_2^h$ are real-valued functions of spacetime, and
$W_i^h \in L^1((0,T^*); Y^{s+2}_{\I,2} )$ and $R_2^h\in L^1((0,T^*); Y^{s+1}_{q_0,2})$.
Moreover, suppose all above functions are bounded in the corresponding norms uniformly with respect to $h$.
\end{assumption}
\begin{assumption}[existence of the limit]\label{asmp:approximate-coef}
In addition to Assumptions \ref{asmp:existence} and \ref{asmp:existence-coef},
we suppose the existence of limits of all $b_0^h$, $w_0^h$, $c_i^h$, $r^h$, $f^h$, $B_i^h$, $W_i^h$, and $R_i^h$
as $h\to 0$ in the corresponding strong topologies.
These strong limits are denoted by $b_0$, $w_0$, $c_i$, $r$, $f$, $B_i$, $W_i$, and $R_i$, respectively.
\end{assumption}
\begin{proposition}\label{prop:existence-sys}
Let Assumptions~\ref{asmp:existence} and \ref{asmp:existence-coef} be satisfied.
Then, there exists $T>0$ independent of $h$, $s$, and $q_0$,
such that for all $h \in [0,1]$ the system \eqref{eq:system1}--\eqref{eq:system2} has a
unique solution 
\begin{equation*}
  (b^h,w^h) \in C\([0,T]; H^s\times Y^{s+1}_{q_0,2}\).
\end{equation*}
Moreover, the norm of $ (b^h,w^h)$ is bounded
uniformly for $h \in [0,1]$. 
If, in addition, Assumption~\ref{asmp:approximate-coef} is satisfied, then the pair 
$(b^h,w^h)$ converges to $(b,w):=(b^h,w^h)_{|h=0}$
in $C([0,T]; H^{s-2} \times Y^{s-1}_{q_0,2})$ as $h \to 0$.
Furthermore, $(b,w)$ solves
\begin{equation*}
\left\{
  \begin{aligned}
	\d_t b ={}& c_1 Q_1(b, w)  + Q_1(B_1,w) + Q_1(b,W_1)
	 +R_1 + ir \Delta b, \\
	\d_t w ={}& c_2 Q_2(w,w)   + Q_2(W_2,w) + Q_2(w,W_2) \\
	&{}  + f(t) \( c_2 Q_3(b,b) + Q_3(B_2,b) + Q_3(b,B_2)\)
	+ R_2, \nonumber
  \end{aligned}
\right.
\end{equation*}
\begin{align*}
	b_{|t=0} &= b_0, &
	w_{|t=0} &= w_0.
\end{align*}
\end{proposition}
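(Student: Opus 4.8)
The plan is to run the generalized WKB scheme of Grenier \cite{Grenier98} in the form already adapted to nonlocal nonlinearities in \cite{AC-ARMA,CM-AA}: view \eqref{eq:system1}--\eqref{eq:system2} as a quasilinear symmetric‑hyperbolic system for the pair $(b^h,w^h)$, with transport principal part coming from $Q_1$ and $Q_2$, perturbed by the \emph{skew‑adjoint} operator $ir^h\Delta$ acting on $b^h$ and by the nonlocal, essentially lower‑order term $Q_3$. First I would set up a Friedrichs‑type iteration (or, equivalently, a parabolic regularization $\partial_t w \mapsto \partial_t w - \nu\Delta w$ that is removed at the end): freezing the pair $(b^h,w^h)$ in the transport/interaction slots $Q_1(b^h,w^h)$, $Q_2(w^h,w^h)$, $Q_3(b^h,b^h)$ at the previous iterate produces a system that is \emph{linear} in the new unknown — a linear transport equation for $w^h$ (Burgers‑type, with known velocity) and a linear transport‑plus‑dispersion equation for $b^h$ — which is solved by standard semigroup/energy theory. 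Uniqueness of $(b^h,w^h)$ will come out of the same contraction estimate used to close the iteration.

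The core is a \emph{uniform‑in‑$h$} energy estimate. Applying derivatives of order $\le s$ (say $\Lambda^s=(1-\Delta)^{s/2}$), pairing in $L^2$ and taking real parts, the decisive point is $\Re\,\langle ir^h\Delta\Lambda^s b^h,\Lambda^s b^h\rangle = \Re\bigl(-ir^h\|\nabla\Lambda^s b^h\|_{L^2}^2\bigr)=0$ since $r^h$ is real; hence $r^h$, and therefore $h$, disappears from the energy inequality entirely — this is exactly what makes the estimate uniform. The transport terms are controlled by Kato--Ponce/Moser commutator estimates, the top‑order pieces $(w^h\cdot\nabla)\Lambda^s(\cdot)$ being integrated by parts to yield $\int(\nabla\cdot w^h)\,|\Lambda^s(\cdot)|^2$, bounded by $\|\nabla w^h\|_{L^\infty}$; and the nonlocal term $Q_3(b_1,b_2)=-\lambda\nabla(|x|^{-\gamma}*(b_1\overline{b_2}))$ is treated via Hardy--Littlewood--Sobolev (Lemma \ref{lem:HPG}), which, thanks to $\gamma\le n-2$, shows that $Q_3$ gains at least one derivative and is thus a genuinely lower‑order perturbation. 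The result is a differential inequality $\tfrac{d}{dt}\mathcal{E}_s \le C(1+\mathcal{E}_s)^{p} + g^h(t)(1+\mathcal{E}_s)$ with $C,p$ depending only on $n,\gamma$, where $\mathcal{E}_s$ is the squared $H^s\times Y^{s+1}_{q_0,2}$ norm and $g^h\in L^1(0,T^*)$ is built from the uniformly bounded forcing data $f^h,B_i^h,W_i^h,R_i^h$; this yields a common existence time $T$. Independence of $T$ from $s$ and $q_0$ is the standard observation that the nonlinear estimates close using only $L^\infty$‑type norms at order $\lesssim n/2+1$, the higher energies growing at most with this fixed rate.

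A point requiring care is the propagation of the $L^{q_0}$ component of $w^h$ (recall $Y^{s+1}_{q_0,2}=L^{q_0}\cap X^{s+1}$): the $w$‑equation must have right‑hand side in $L^{q_0}$, and the delicate contribution is $Q_3(b^h,b^h)\sim |x|^{-\gamma-1}\!*\!|b^h|^2$; since $|x|^{-\gamma-1}\in L^{n/(\gamma+1),\infty}$ and $|b^h|^2\in L^1\cap L^\infty$, weak Young/Hardy--Littlewood--Sobolev together with interpolation against the $L^\infty$ bound place this in $L^{q_0}$ precisely when $q_0>n/(\gamma+1)$ — which is where that hypothesis is used. With the uniform high‑norm bound in hand, one shows the iteration is Cauchy in the weaker norm $H^{s-1}\times Y^{s}_{q_0,2}$ (the classical one‑derivative loss for quasilinear systems), passes to the limit of the iteration there, and then recovers the full regularity $(b^h,w^h)\in C([0,T];H^s\times Y^{s+1}_{q_0,2})$ by weak‑$*$ compactness and interpolation with the uniform bound.

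For the limit $h\to0$ under Assumption \ref{asmp:approximate-coef}, I would subtract the equations for $(b^h,w^h)$ and $(b^{h'},w^{h'})$: writing each nonlinearity as (homogeneous term in the difference, with bounded coefficients) $+$ (difference of the data multiplying a bounded quantity), the difference solves a linear system of the same type whose forcing tends to $0$ in the appropriate topologies, \emph{except} for the term $i(r^h-r^{h'})\Delta b^{h'}$, which is only controlled in $H^{s-2}$; hence the difference estimate closes in $C([0,T];H^{s-2}\times Y^{s-1}_{q_0,2})$ — this is exactly the two‑derivative loss in the statement. Thus $(b^h,w^h)$ converges strongly in that space to some $(b,w)$; passing to the limit in \eqref{eq:system1}--\eqref{eq:system2}, where every nonlinear term converges by strong low‑norm convergence against high‑norm boundedness and $ir^h\Delta b^h\to ir\Delta b$ in $H^{s-2}$, shows $(b,w)$ solves the limiting ($h=0$) system, and by the uniqueness already established it equals $(b^h,w^h)_{|h=0}$. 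I expect the main obstacle to be the bookkeeping in the nonstandard scale $Y^{s+1}_{q_0,2}$ — propagating the $L^{q_0}$ integrability of $w^h$ under the nonlocal forcing and running the commutator estimates in that scale — rather than the hyperbolic energy estimate itself, which is by now routine; the exact cancellation of the dispersive term $ir^h\Delta b^h$ is the structural fact that keeps everything uniform in $h$.
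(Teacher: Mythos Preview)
Your proposal is correct and follows essentially the same approach as the paper, which merely records the key energy inequality
\[
\frac{d}{dt}E^h \le C(1+|f^h|)\Big[(|c_1^h|+|c_2^h|)(E^h)^{3/2}+(\cdots)E^h\Big]+C(\cdots)(E^h)^{1/2},\qquad E^h=\|b^h\|_{H^s}^2+\|\nabla w^h\|_{H^s}^2,
\]
and refers the reader to \cite{CM-AA} for the details you have spelled out (skew-adjointness of $ir^h\Delta$, commutator control of the transport terms, HLS for $Q_3$, separate propagation of the $L^{q_0}$ component of $w^h$, and the two-derivative loss in the $h\to0$ stability estimate coming from $i(r^h-r^{h'})\Delta b^{h'}$). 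Your write-up is in fact a faithful expansion of exactly that argument.
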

\begin{proof}
The key is the following energy estimate for $s>n/2+1$
\begin{align*}
	\frac{d}{dt} E^h
	\le{}& C (1+|f^h(t)|)\Big[(|c_1^h| + |c_2^h|)(E^h)^{\frac32}  \\
	&{}+(\tSobn{B^h_1}{s+1}+\tSobn{B^h_2}{s}+\tSobn{\nabla W^h_1}{s}+\tSobn{\nabla W^h_2}{s+1})E^h\Big] \\
	&{}+ C(\tSobn{R^h_1}{s} + \tSobn{\nabla R^h_2}{s})(E^h)^{\frac12},
\end{align*}
where $E^h:= \tSobn{b^h}{s}^2 + \tSobn{\nabla w^h}{s}^2$.
For more details, see the proof of Proposition 4.1 in \cite{CM-AA}.
\end{proof}
\begin{remark}
We intend to apply this proposition to the system \eqref{eq:sys}.
In that case, $f^h(t)$ corresponds to $t^{\g-2}$,
which is singular at $t=0$ if $\g<2$.
Since $f^h(t)$ is only supposed to be integrable,
we will see that the system \eqref{eq:sys}
(and so the equation \eqref{eq:r5}) has a unique solution for $\g>1$
while the singularity.
We also note that this corresponds to the fact that
the Hartree nonlinearity is short range when $\g>1$.
\end{remark}
\begin{remark}
In the convergence part of Proposition \ref{prop:existence-sys}, it can happen that $n/2+1 \ge s-1>n/2$.
In this case, we use the definition \eqref{def:Y3} instead of \eqref{def:Y}.
\end{remark}
We conclude this section with a lemma which we use for the construction of a function $\phi^h$ 
from the corresponding solution $w^h=\nabla \phi^h$ to \eqref{eq:system1}--\eqref{eq:system2}.
This lemma is a consequence of Lemma \ref{lem:HPG}.
\begin{lemma}\label{lem:int}
If $\varphi$ satisfies $|\varphi|\to 0$ as $|x|\to \I$ and $\nabla \varphi \in Y^s_{q,2}$
for some $s>n/2$ and some $q<n$, $q \le 2^*$ then $\varphi \in Y^{s+1}_{q^*,q}$.
\end{lemma}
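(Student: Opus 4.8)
The plan is to verify directly that $\varphi$ has finite $Y^{s+1}_{q^*,q}$-norm, i.e. that $\varphi \in L^{q^*}$, $\nabla\varphi \in L^q$, and $\nabla^2\varphi \in H^{s-1}$, and then to upgrade ``finite norm'' to ``membership in the completion'' by a routine truncation--mollification argument. The last two bounds are immediate from the hypothesis: since $\nabla\varphi \in Y^s_{q,2}$ and $s>n/2$, the alternative description \eqref{def:Y3} gives $\nabla\varphi \in L^q \cap X^s$, whence $\nabla\varphi \in L^q$ and $\nabla^2\varphi = \nabla(\nabla\varphi) \in H^{s-1} = H^{(s+1)-2}$.

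The first bound, $\varphi \in L^{q^*}$, is the only step that uses the decay hypothesis. Since $\nabla\varphi \in L^q$ with $1<q<n$ (the range in which the lemma has content and which holds in all our applications), Lemma \ref{lem:HPG} furnishes a constant $c$ with $\varphi - c \in L^{q^*}$, where $1/q = 1/q^* + 1/n$, that is $q^* = nq/(n-q)$. It then remains to show $c=0$, and this is where $|\varphi| \to 0$ as $|x|\to\I$ enters: because $\varphi - c \in L^{q^*}$, the integral of $|\varphi-c|^{q^*}$ over an annulus $\{R<|x|<2R\}$ tends to $0$ as $R\to\I$, so on every such annulus there are points at which $|\varphi-c|$ is arbitrarily small; combined with $\varphi \to 0$ at infinity this forces $|c|$ to be arbitrarily small, hence $c=0$ and $\varphi \in L^{q^*}$. (The hypothesis $q\le 2^*$ plays no role here beyond keeping the indices consistent with the conventions collected in Section~\ref{subsec:spaceY}.)

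Having controlled all three norms, so that $\norm{\varphi}_{Y^{s+1}_{q^*,q}}<\I$, I would finish by approximating $\varphi$ with $\rho_\delta * (\chi(\cdot/R)\varphi)$ for a standard cutoff $\chi$ and mollifier $\rho_\delta$: convergence of the second derivatives in $H^{s-1}$ is standard, while convergence of the function in $L^{q^*}$ and of its gradient in $L^q$ uses precisely $\varphi \in L^{q^*}$ and $\nabla\varphi \in L^q$. I expect the only genuine (and still modest) obstacle to be the elimination of the additive constant $c$ coming from Lemma \ref{lem:HPG}; the remainder is bookkeeping with the definitions \eqref{def:Y}--\eqref{def:Y3} and the already recorded properties of the spaces $Y^s_{p,q}$.
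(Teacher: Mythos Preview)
Your proof is correct and follows exactly the approach the paper intends: the paper gives no proof beyond the one-line remark ``This lemma is a consequence of Lemma~\ref{lem:HPG},'' and what you have written is precisely the natural way to flesh that out. The identification $c=0$ from the decay hypothesis is the only nontrivial point, and you handle it correctly; the remaining verification of the three pieces of the $Y^{s+1}_{q^*,q}$-norm and the truncation--mollification density step are indeed routine bookkeeping from the definitions.
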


\section{Proof of Theorem \ref{thm:main}}\label{sec:proof}
\subsection{Strategy}\label{subsec:strategy}
In this section, we illustrate the strategy of
the proof of Theorem \ref{thm:main} rather precisely.
As in Section \ref{sec:intro},
we first introduce the semiclassical conformal transform:
\begin{equation}\tag{\ref{eq:spct}}
	u^\eps(t,x) = \frac{1}{(1-t)^{n/2}} \psi^\eps \( \frac{\eps^{\frac{\al}{\g}}}{1-t}, \frac{x}{1-t} \)
	\exp\(i \frac{|x|^2}{2\eps(t-1)}\).
\end{equation}
Putting $\tau := \eps^{\frac{\al}{\g}}/(1-t)$ and $y := x/(1-t)$, we find that 
\eqref{eq:r4}--\eqref{eq:odata} becomes
\begin{align*}
	i \eps^{1-\frac{\al}{\g}} \d_\tau \psi^\eps
	+ \frac{(\eps^{1-\frac{\al}{\g}})^2}{2} \Delta_y \psi^\eps
	&= \l \tau^{\g-2} (|y|^{-\g} * |\psi^\eps|^2 )\psi^\eps, &
	\psi^\eps_{|\tau=\eps^{\al/\g} } &= a_0.
\end{align*}
Put $h=\eps^{1-\al/\g}$ and denote $\psi^\eps$ by $\psi^h$.
We note that $\eps \to 0$ is equivalent to $h\to 0$ as long as $\al < \g$.
Thus, our problem is reduced to the limit $h\to 0$ of the solution to 
\begin{align}\tag{\ref{eq:r5}}
  ih\d_\tau \psi^h +\frac{h^2}{2}\Delta \psi^h &= \l
                \tau^{\g-2} (\lvert y\rvert ^{-\g}\ast  |\psi^h|^2)\psi^h, &
	\psi^h_{|\tau=h^{\frac{\al}{\g-\al}}}(y) &= a_0(y).
\end{align}
Our strategy is to seek a solution $\psi^h$ to \eqref{eq:r5} represented as 
\begin{equation}\tag{\ref{eq:grenier}}\label{eq:psi-ap}
        \psi^h(\tau,y)=a^h(\tau,y)
        e^{i\phi^h(\tau,y)/h},
\end{equation}
with a complex-valued space-time function $a^h$ and a
real-valued space-time function $\phi^h$. Note that $a^h$
is expected to be complex-valued, even if its initial value $a_0$
is real-valued. 
Substituting the form \eqref{eq:psi-ap} into \eqref{eq:r5}, we obtain
\begin{multline*}
	-a^h \(  \d_\tau \phi^h + \dfrac{1}{2}|\nabla
	\phi^h|^2 + \l \tau^{\g-2}(|y|^{-\g}*|a^h|^2)\) \\
	+i h \( \d_\tau a^h + (\nabla \phi^h \cdot
	\nabla) a^h + \dfrac{1}{2}a^h \Delta \phi^h 
	- i \dfrac{h}{2}\Delta a^h\) = 0.
\end{multline*}
To obtain a solution of the above equation (hence, of \eqref{eq:r5}),
we choose to consider 
\begin{equation}\tag{\ref{eq:sys}}
	\left\{
	\begin{aligned}
		&\partial_\tau a^h + \nabla \phi^h \cdot \nabla a^h + \frac12 a^h \Delta \phi^h = i\frac{h}{2}\Delta a^h,\\
		&\partial_\tau \phi^h + \frac12 |\nabla \phi^h|^2 + \l \tau^{\g-2}(|y|^{-\g}*|a^h|^2) =0, \\
		& a^h_{|\tau=h^{\frac{\al}{\g-\al}}}=a_0, \quad \phi^h_{|\tau=h^{\frac{\al}{\g-\al}}}=0.
	\end{aligned}
	\right.
\end{equation}
The point is that this system can be regarded as a symmetric hyperbolic system
with semilinear perturbation.
In Section \ref{subsec:pexistence}, we first prove
that it admits a unique solution with suitable regularity (see
Proposition~\ref{prop:p-existence}), hence providing a solution to
\eqref{eq:r5} and \eqref{eq:r4}--\eqref{eq:odata}.

By \eqref{eq:psi-ap}, in order to obtain a leading order WKB type approximate solution
it suffices to determine $O(h^0)$ and $O(h^1)$ terms of $\phi^h$
in the limit $h \to 0$.
Letting $h=0$ in \eqref{eq:sys}, 
we formally obtain the $O(h^0)$ term $(b_0,\phi_0)$ which solves
\begin{equation}\tag{\ref{eq:lsys}}
	\left\{
	\begin{aligned}
		&\partial_t b_0 + \nabla \phi \cdot \nabla b_0 + \frac12 b_0 \Delta \phi_0 = 0,\\
		&\partial_t \phi_0 + \frac12 |\nabla \phi_0|^2 + \l t^{\g-2}(|y|^{-\g}*|b_0|^2) =0, \\
		& b_{0|\tau=0}=a_0, \quad \phi_{0|\tau=0}=0
	\end{aligned}
	\right.
\end{equation}
introduced in Section \ref{sec:intro}.
The difficulty of finding $h^1$-terms lies in the following two respects;
firstly, the equation \eqref{eq:sys} depends on $h$ through the term $i\frac{h}{2}\Delta a^h$;
and secondly the initial data of \eqref{eq:sys} is moving at a speed $h^{\AG}$.
In Section \ref{subsec:timeexpansion}, we give the time expansion of $(b_0,\phi_0)$
around $t=0$.
We will obtain an expansion of the form
\begin{align}\label{eq:tmpexp}
	b_0(\tau,y) & {} \asymp \sum_{j=0}^\I \tau^{\g j} a_j(y), &
	\phi_0(\tau,y) & {} \asymp  \sum_{j=1}^\I \tau^{\g j -1} \varphi_j(y)
\end{align}
 (Proposition \ref{prop:t-expansion}). 
This expansion is essential in handling the moving initial-data.

In Section \ref{subsec:expansion},
we finally determine $O(h^1)$ term in the case $\al \ge 1$.
What to show is the existence of the limits
\begin{align*}
	\frac{a^h(\tau) - b_0(\tau)}{h} & {} \to  b_{\mathrm{equ}}(\tau) , &
	\frac{\phi^h(\tau) - \phi_0(\tau)}{h} & {} \to \phi_{\mathrm{equ}}(\tau)
\end{align*}
as $h \to 0$.
A formal differentiation of \eqref{eq:sys} with respect to $h$
suggests that $(b_{\mathrm{equ}},\phi_{\mathrm{equ}})$ may solve the linearized system
\begin{equation}\tag{\ref{eq:tbtp1}}
	\(
	\begin{aligned}
		&\d_\tau b_{\mathrm{equ}} + \nabla \phi_{\mathrm{equ}}\cdot \nabla b_0
		+ \nabla \phi_0 \cdot \nabla b_{\mathrm{equ}}
		+  \frac{1}{2} b_{\mathrm{equ}} \Delta \phi_0 + \frac{1}{2} b_0 \Delta \phi_{\mathrm{equ}}
		= \frac{i}{2} \Delta b_0, \\
		&\d_\tau \phi_{\mathrm{equ}} + \nabla \phi_0 \cdot  \nabla \phi_{\mathrm{equ}}
		+ \l \tau^{\g-2} (|x|^{-\g}* 2 \Re \overline{b_0} b_{\mathrm{equ}} ) = 0.
	\end{aligned}
	\right.
\end{equation}
By means of \eqref{eq:tmpexp}, the following estimates hold at the initial time:
\begin{align*}
	\frac{a^h(h^{\AG}) - b_0(h^{\AG})}{h} & {} = -\frac{b_0(h^{\AG}) - a_0}{h} = O(h^{\frac{\al \g}{\g -\al} -1}), \\
		\frac{\phi^h(h^{\AG}) - \phi_0(h^{\AG})}{h} & {} = -\frac{\phi_0(h^{\AG}) }{h} = O(h^{\frac{\al (\g-1)}{\g -\al} -1}).
\end{align*}
Note that $\frac{\al \g}{\g -\al}>1$ for all $\al \ge 1$, however,
$\frac{\al (\g-1)}{\g -\al} >1$ if $\al >1$ and
$\frac{\al (\g-1)}{\g -\al} = 1$ if $\al =1$.
In particular, 
\[
	\frac{\phi^h(h^{\AG}) - \phi_0(h^{\AG})}{h} \to
	\begin{cases}
	0 & \text{ if } \al >1, \\
	- \varphi_1 & \text{ if } \al = 1,
	\end{cases}
\]
where $\varphi_1$ is defined in \eqref{eq:tmpexp}.
Therefore, the $O(h^1)$ term is described by $(b_{\mathrm{equ}},\phi_{\mathrm{equ}})$ solving
\eqref{eq:tbtp1} with \eqref{eq:tbtp2} if $\al >1$ and
with \eqref{eq:tbtp3} if $\al = 1$.
In Section \ref{sec:sscase}, we consider the case $\al < 1$.
In this case, the above powers $\frac{\al \g}{\g -\al}$
and $\frac{\al (\g-1)}{\g -\al}$ are less than one, in general.
Therefore, there appear several terms which is order less than $O(h^1)$
in the expansion of $(a^h,\phi^h)$.
We determine all these terms and obtain the asymptotic behavior of $(a^h,\phi^h)$
(see, Theorem \ref{thm:sscase}).

In Sections \ref{sec:proof} and \ref{sec:sscase}, we mainly work with $v^h=\nabla \phi^h$ instead of $\phi^h$ itself.
Note that, by means of Lemma \ref{lem:int}, it is easy to construct $\phi^h$ from $v^h$.

\subsection{Existence of phase-amplitude form solution}\label{subsec:pexistence}
According to the strategy in Section \ref{subsec:strategy},
we first show that the system \eqref{eq:sys} has a unique solution.
\begin{proposition}\label{prop:p-existence}
Let Assumption \ref{asmp:1} be satisfied.
Assume $0<\al<\g$.
Then, there exists $T>0$ independent of $h$ such that,
for all $h\in (0,1]$, there exists a unique solution $\psi^h \in C([h^{\AG},T+h^{\AG}];H^\I)$
to \eqref{eq:r5}. Moreover, $\psi^h$ is written as
\[
	\psi^h = a^h e^{i\frac{\phi^h}{h}},
\]
where 
\[
	a^h \in C([h^{\AG},T+h^{\AG}];H^\I) \cap C^\I((h^{\AG},T+h^{\AG}];H^\I)
\]
and 
\begin{align*}
	\phi^h \in & C([h^{\AG},T+h^{\AG}];Y^{\I}_{(n/\g,\I],(n/(\g+1),\I]}) \\
	&{} \cap C^\I((h^{\AG},T+h^{\AG}];Y^{\I}_{(n/\g,\I],(n/(\g+1),\I]}).
\end{align*}
Moreover, there exists a limit $(b_0,\phi_0):=(a^h,\phi^h)_{|h=0}$ belonging
the same function space as $(a^h,\phi^h)$ ($h>0$), and $(a^h,\phi^h)$
converges strongly to $(b_0,\phi_0)$ as $h\to 0$.
Furthermore, $(b_0,\phi_0)$ solves \eqref{eq:lsys}.
\end{proposition}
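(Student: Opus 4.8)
The plan is to recast the phase--amplitude system \eqref{eq:sys} as a particular instance of the general quasilinear system \eqref{eq:system1}--\eqref{eq:system2} and to invoke Proposition~\ref{prop:existence-sys}. First I would pass to the velocity variable $v^h:=\nabla\phi^h$ and shift time, putting $\si:=\tau-h^{\AG}$, so that the moving initial time is relocated to $\si=0$ at the cost of replacing the coefficient $\tau^{\g-2}$ by $(\si+h^{\AG})^{\g-2}$. In the notation of \eqref{eq:system1}--\eqref{eq:system2} this is the system with $c_1^h=c_2^h=1$, $r^h=h/2$, $f^h(\si)=\l(\si+h^{\AG})^{\g-2}$, all of $B_i^h$, $W_i^h$, $R_i^h$ equal to $0$, and data $b_0^h=a_0\in H^\I$, $w_0^h=\nabla\phi^h_{|\tau=h^{\AG}}=0$. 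The only nontrivial part of checking Assumptions~\ref{asmp:existence}--\ref{asmp:approximate-coef} uniformly in $h$ is the $L^1$ control of the time coefficient: $\norm{f^h}_{L^1(0,T^*)}\le|\l|\int_0^{T^*}\si^{\g-2}\,d\si<\I$, together with $f^h\to\l\si^{\g-2}$ in $L^1(0,T^*)$ by dominated convergence. Both hold precisely because $\g>1$ (Assumption~\ref{asmp:1}), i.e.\ because the Hartree nonlinearity is short range; the remaining data and coefficient bounds are uniform in $h$ and in $s$ since $a_0\in H^\I$ and $w_0^h\equiv0$.

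Proposition~\ref{prop:existence-sys} then produces, for some $T>0$ independent of $h$, $s$ and $q_0$, a unique solution $(a^h,v^h)\in C([0,T];H^s\times Y^{s+1}_{q_0,2})$, bounded uniformly for $h\in[0,1]$, which converges as $h\to0$ in $C([0,T];H^{s-2}\times Y^{s-1}_{q_0,2})$ to $(b_0,v_0):=(a^h,v^h)_{|h=0}$ solving the corresponding $h=0$ system; letting $s\to\I$ upgrades this to $H^\I$ regularity and to convergence in every $H^s$, and $q_0$ may be taken anywhere in $]n/(\g+1),n[$. To reconstruct the phases I would integrate the scalar equation of \eqref{eq:sys} in time, setting $\phi^h(\tau,y):=-\int_{h^{\AG}}^{\tau}\bigl[\tfrac12|v^h(\tau',y)|^2+\l(\tau')^{\g-2}\bigl(|y|^{-\g}*|a^h(\tau')|^2\bigr)(y)\bigr]\,d\tau'$, and similarly for $\phi_0$ with lower limit $0$. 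Since $v^h_{|\tau=h^{\AG}}=0$ the velocity stays curl-free, so $\nabla\phi^h=v^h$; both terms in the integrand vanish as $|y|\to\I$, hence $\phi^h\to0$ at spatial infinity, and Lemma~\ref{lem:int} together with the embedding properties of the $Y^s_{p,q}$ spaces places $\phi^h$ and $\phi_0$ in $Y^\I_{(n/\g,\I],(n/(\g+1),\I]}$. By construction $(b_0,\phi_0)$ solves \eqref{eq:lsys}, $(a^h,\phi^h)$ solves \eqref{eq:sys}, and $(a^h,\phi^h)\to(b_0,\phi_0)$ in the stated topologies.

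It then remains to check that $\psi^h:=a^h\exp(i\phi^h/h)$ solves \eqref{eq:r5}, which is exactly the Madelung-type identity of Section~\ref{subsec:strategy}: after substitution the $O(h^0)$ part is $-a^h$ times the phase equation of \eqref{eq:sys} and the $O(h)$ part is $ih$ times its amplitude equation, so both vanish. For the time regularity on the open interval $(h^{\AG},T+h^{\AG}]$, note that for fixed $h>0$ the shifted coefficient $\si\mapsto(\si+h^{\AG})^{\g-2}$ is smooth in $\si\ge0$ and the dispersive term $i\tfrac h2\Delta a^h$ is harmless on $H^\I$; differentiating the equations in $\tau$ and bootstrapping then yields $a^h\in C^\I$ in time with values in $H^\I$ and $\phi^h\in C^\I$ in time with values in the relevant $Y$ space. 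Uniqueness of $\psi^h$ in $C([h^{\AG},T+h^{\AG}];H^\I)$ follows from a standard energy estimate for the difference of two solutions of \eqref{eq:r5}, the Hartree potential being locally Lipschitz on $H^s$ since $1<\g\le n-2<n$.

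I expect the main obstacle to be making the existence time $T$ genuinely independent of $h$ in the presence of the two structural features pointed out in the strategy: the initial times $\tau=h^{\AG}$ accumulate at $\tau=0$, where $\tau^{\g-2}$ is singular when $\g<2$. The time shift converts this into the single requirement that $f^h(\si)=\l(\si+h^{\AG})^{\g-2}$ be bounded in $L^1((0,T))$ uniformly in $h$ and converge there, and this is exactly where the hypothesis $\g>1$ enters. Everything downstream is then either a direct application of Proposition~\ref{prop:existence-sys} or routine; the only other point requiring a little care is the index bookkeeping when passing from $v^h\in Y^{s+1}_{q_0,2}$ to $\phi^h\in Y^{s+1}_{q_0^*,q_0}$ via Lemma~\ref{lem:int} and then sweeping $q_0$ over $]n/(\g+1),n[$ to obtain the full index range claimed in the statement.
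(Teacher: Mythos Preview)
Your proposal is correct and follows essentially the same route as the paper: pass to the velocity $v^h=\nabla\phi^h$, shift time by $h^{\AG}$ to fix the initial time, and apply Proposition~\ref{prop:existence-sys} with $c_1^h=c_2^h=1$, $r^h=h/2$, $f^h(\si)=(\si+h^{\AG})^{\g-2}$ and all $B_i^h,W_i^h,R_i^h\equiv0$, the key point being the uniform $L^1$ bound on $f^h$ from $\g>1$; the phase is then recovered via Lemma~\ref{lem:int} and the decay of the right-hand side of the phase equation, and $C^\I$ time regularity comes from bootstrapping. One cosmetic slip: since $\l$ is already absorbed into $Q_3$ in \eqref{eq:q-3}, your $f^h$ should not carry the extra factor $\l$.
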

\begin{proof}
We set velocity $v^h = \nabla \phi^h$.
Then, the pair $(a^h,v^h)$ solves
\begin{equation}\label{eq:sysav1}
\left\{
  \begin{aligned}
	&\d_\tau a^h = Q_1(a^h,v^h) +i\frac{h}{2} \Delta a^h, \\
	&\d_\tau v^h = Q_2(v^h,v^h) + \tau^{\g-2} Q_3(a^h,a^h), 
  \end{aligned}
\right.
\end{equation}
\begin{align}\label{eq:sysav2}
	a^h_{|\tau=h^{\AG}} &= a_0, &
	v^h_{|\tau=h^{\AG}} &= 0,
\end{align}
where $Q_1$, $Q_2$, and $Q_3$ are defined by \eqref{eq:q-1}, \eqref{eq:q-2},
and \eqref{eq:q-3}, respectively.
To fix the initial time, we employ the time translation $\tau = t+h^{\AG}$.
Then, the equation is
\begin{equation}\label{eq:sysav3}
\left\{
  \begin{aligned}
	&\d_t a^h = Q_1(a^h,v^h) +i\frac{h}{2} \Delta a^h, \\
	&\d_t v^h = Q_2(v^h,v^h) + (t+h^{\AG})^{\g-2} Q_3(a^h,a^h), 
  \end{aligned}
\right.
\end{equation}
\begin{align}\label{eq:sysav4}
	a^h_{|t=0} &= a_0, &
	v^h_{|t=0} &= 0.
\end{align}
Now, the assumption $\g >1$ implies that $(t+h^{\AG})^{\g-2}$ is integrable over $(0,T^*)$ for some $T^*>0$
and its integral is uniformly bounded with respect to $h$.
Fix $s>n/2+1$.
Then, applying Proposition \ref{prop:existence-sys} with $c_1^h=c_2^h=1$, $r^h=h/2$, $f^h(t)=(t+h^{\AG})^{\g-2}$,
and $B_i^h\equiv W_i^h \equiv R_i^h \equiv 0$,
we obtain the existence time $0<T\le T^*$ independent of $\eps$ and the unique solution
\[
	(a^h,v^h) \in C([0,T]; H^s \times Y^{s+1}_{(n/(\g+1),\I],2})
\]
to \eqref{eq:sysav3}--\eqref{eq:sysav4}
such that $\tSobn{a^h}{s}^2 + \tSobn{\nabla v^h}{s}^2$ is bounded uniformly with respect to $t\in [0,T]$.
The upper bound depends only on $\tSobn{a_0}{s}$.
Notice that Assumption \ref{asmp:existence} is satisfied for all $s>n/2+1$ and
$q_0 \in ]n/(\g+1),n[$.

By the equation and the Hardy-Littlewood-Sobolev inequality,
$\partial_t v^h$ belongs to $C([0,T]; Y^{s}_{(n/(\g+1),\I],2})$.
Hence, we see from Lemma \ref{lem:int} that $\partial_t \phi^h \in C([0,T]; Y^{s+1}_{(n/\g,\I],(n/(\g+1),\I]})$.
Since $\phi^h_{|t=0} = 0$, we also have 
\[
	\phi^h \in C([0,T]; Y^{s+1}_{(n/\g,\I],(n/(\g+1),\I]}).
\]
Therefore, we have $\phi^h \to 0 $ as $|x|\to \I$.
Then, again applying Lemma \ref{lem:int} to $v^h$,
we conclude that $\phi^h \in C([0,T]; Y^{s+2}_{(n/\g,\I],(n/(\g+1),\I])}$.
Since the existence time $T$ is independent of $s$, we have
$a^h \in C([0,T];H^\I)$ and $\phi^h \in C([0,T];Y^{\I}_{(n/\g,\I],(n/(\g+1),\I]})$.
The bootstrap argument gives the $C^\I$ regularity with respect time.
The existence of the limit $(b_0,\phi_0)$ and the convergence
$(a^h,\phi^h)\to (b_0,\phi_0)$ as $h\to 0$ follow from the latter part of
Proposition \ref{prop:existence-sys} and Lemma \ref{lem:int}.
\end{proof}

\subsection{Time expansion of the limit solution near $\tau=0$}\label{subsec:timeexpansion}
By Proposition \ref{prop:p-existence}, 
the system \eqref{eq:sysav1}--\eqref{eq:sysav2} 
has a unique solution even if $h=0$.
We keep working with $v^h=\nabla \phi^h$ instead of $\phi^h$
Write $(b_0,w_0):=(a^h,v^h)_{|h=0}$.
The main difficulty of describing the asymptotic behavior of $(a^h,v^h)$ as $h \to 0$
comes from the fact that the initial data is given at $\tau=h^{\AG}$.
In order to handle this $h$-dependence of the initial time,
we give a time expansion of $(b_0,w_0) $ around $\tau=0$.

Note that $(b_0,w_0)$ solves
\begin{align}\label{eq:sysb01}
	\d_\tau b_0 &{}= Q_1(b_0,w_0), &
	\d_\tau w_0 &{}= Q_2(w_0 ,w_0) + \tau^{\g-2} Q_3(b_0,b_0), 
\end{align}
\begin{align}\label{eq:sysb02}
	b_{0|\tau=0} &= a_0, & w_{0|\tau=0} & = 0,
\end{align}
where the quadratic forms $Q_i$ are defined by \eqref{eq:q-1}--\eqref{eq:q-3}.
\begin{proposition}\label{prop:t-expansion}
Let $(b_0,w_0)=(b_0,\nabla \phi_0)$ be the unique solution to \eqref{eq:sysb01}--\eqref{eq:sysb02} defined by
Proposition \ref{prop:p-existence}.
Then, it holds that
\begin{align}\label{eq:t-exp1}
	b_0(\tau,y) & {} = \sum_{j=0}^J \tau^{\g j} a_j(y) + o(\tau^{\g J}) \IN H^\I, \\
	\label{eq:t-exp2}
	w_0(\tau,y) & {} =  \sum_{j=1}^J \tau^{\g j -1} v_j(y) + o(\tau^{\g J-1}) \IN Y^\I_{(n/(\g+1),\I],2}
\end{align}
as $\tau\to 0$ for all $J$, where $a_0$ is the initial data for $b_0$, $a_j$ and $v_j$ are defined by
\[
	a_j = \frac1{\g j} \sum_{k_1\ge 0, k_2 \ge 1, k_1+k_2=j}Q_1(a_{k_1},v_{k_2})
\]
for $j \ge 1$, $v_1 = Q_3(a_0,a_0)/(\g-1)$, and
\[
	v_j = \frac1{\g j -1} \left[ 
	\sum_{k_1\ge 1, k_2 \ge 1, k_1+k_2=j}Q_2(v_{k_1},v_{k_2})
	+ \sum_{k_1\ge 0, k_2 \ge 0, k_1+k_2=j-1}Q_3(a_{k_1},a_{k_2})
	\right]
\]
for $j \ge 2$ with the quadratic forms $Q_i$ defined by \eqref{eq:q-1}--\eqref{eq:q-3}.
Moreover, $\phi_0$ is expanded as
\begin{equation}\tag{\ref{eq:p-exp}}
	\phi_0(\tau,y)  =  \sum_{j=1}^J \tau^{\g j -1} \varphi_j(y) + o(\tau^{\g J-1})
	\IN Y^\I_{(n/\g,\I],(n/(\g+1),\I]}
\end{equation}
as $\tau \to 0$ for all $J \ge 1$, where $\varphi_j$ is given by $\varphi_1=\frac{\l}{1-\g}(|y|^{-\g}*|a_0|^2)$ and
\begin{multline*}
	\varphi_j = \frac1{1-\g j} \Bigg[ 
	\sum_{k_1\ge 1, k_2 \ge 1, k_1+k_2=j}\frac12 (\nabla \varphi_{k_1} \cdot \nabla \varphi_{k_2}) \\
	+ \sum_{k_1\ge 0, k_2 \ge 0, k_1+k_2=j-1}\l(|y|^{-\g}*(a_{k_1}\overline{a_{k_2}})
	\Bigg] .
\end{multline*}
\end{proposition}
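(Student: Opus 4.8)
The plan is to separate a \textbf{formal derivation} of the coefficients $a_j,v_j,\varphi_j$ from a \textbf{rigorous justification} that the resulting series are genuine asymptotic expansions; the latter is an induction on the order $J$ closed by an energy estimate and Gronwall's inequality.

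\emph{Formal derivation.} First I would insert the ansatz $b_0\asymp\sum_{j\ge0}\tau^{\g j}a_j$ and $w_0\asymp\sum_{j\ge1}\tau^{\g j-1}v_j$ into the system \eqref{eq:sysb01}, expand $Q_1,Q_2,Q_3$ by bilinearity, and equate the coefficient of $\tau^{\g j-1}$ in the first equation and of $\tau^{\g j-2}$ in the second. Since $\partial_\tau(\tau^{\g j-1}v_j)=(\g j-1)\tau^{\g j-2}v_j$ and the term $\tau^{\g-2}Q_3(b_0,b_0)$ contributes at orders $\tau^{\g-2+\g(k_1+k_2)}$, this reproduces exactly the stated recursions: $\g j\,a_j=\sum_{k_1+k_2=j}Q_1(a_{k_1},v_{k_2})$ ($k_1\ge0,\ k_2\ge1$), $(\g-1)v_1=Q_3(a_0,a_0)$, and $(\g j-1)v_j=\sum_{k_1+k_2=j}Q_2(v_{k_1},v_{k_2})+\sum_{k_1+k_2=j-1}Q_3(a_{k_1},a_{k_2})$ for $j\ge2$. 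As $\g>1$ no factor $\g j$ or $\g j-1$ vanishes, so the recursion is well defined, and by induction, using the mapping properties of $Q_1,Q_2,Q_3$ from the existence theory (Hardy--Littlewood--Sobolev and Lemma~\ref{lem:HPG}) together with $a_0\in H^\I$, one gets $a_j\in H^\I$ and $v_j\in Y^\I_{(n/(\g+1),\I],2}$. Running the same matching on the scalar equation $\partial_\tau\phi_0+\tfrac12|\nabla\phi_0|^2+\l\tau^{\g-2}(|y|^{-\g}*|b_0|^2)=0$ yields the stated $\varphi_j$; applying $\nabla$ to that recursion and using that $\nabla\varphi_k$ is curl-free (so $\tfrac12\nabla(\nabla\varphi_{k_1}\!\cdot\!\nabla\varphi_{k_2})=-\tfrac12(Q_2(v_{k_1},v_{k_2})+Q_2(v_{k_2},v_{k_1}))$ after identifying $\nabla\varphi_k=v_k$ inductively) and that $\nabla(\l|y|^{-\g}*(a_{k_1}\overline{a_{k_2}}))=-Q_3(a_{k_1},a_{k_2})$, one checks that $\nabla\varphi_j$ obeys precisely the $v_j$-recursion, hence $\nabla\varphi_j=v_j$; Lemma~\ref{lem:int} then gives $\varphi_j\in Y^\I_{(n/\g,\I],(n/(\g+1),\I]}$.

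\emph{Justification.} I would prove, by induction on $J$, the slightly stronger estimates $\norm{b_0(\tau)-\sum_{j=0}^{J}\tau^{\g j}a_j}_{H^s}=O(\tau^{\g(J+1)})$ and $\norm{w_0(\tau)-\sum_{j=1}^{J}\tau^{\g j-1}v_j}_{Y^s_{(n/(\g+1),\I],2}}=O(\tau^{\g(J+1)-1})$ for every $s\ge0$ (the stated $o$-bounds follow immediately); throughout, Proposition~\ref{prop:p-existence} provides boundedness of $(b_0,w_0)$ in $C([0,T];H^\I\times Y^\I_{(n/(\g+1),\I],2})$ and $w_0(0)=0$. For $J=0$, the Duhamel form $w_0(\tau)=\int_0^\tau(Q_2(w_0,w_0)+s^{\g-2}Q_3(b_0,b_0))\,ds$ together with the a priori bounds gives $\norm{w_0(\tau)}\le C\int_0^\tau s^{\g-2}\,ds+C\int_0^\tau\norm{w_0(s)}\,ds$, whence Gronwall and $\g>1$ yield $\norm{w_0(\tau)}=O(\tau^{\g-1})$; feeding this into $b_0(\tau)=a_0+\int_0^\tau Q_1(b_0,w_0)\,ds$ gives $\norm{b_0(\tau)-a_0}_{H^s}=O(\tau^\g)$. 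For the inductive step, put $\beta_J:=b_0-\sum_{j\le J}\tau^{\g j}a_j$ and $\omega_J:=w_0-\sum_{1\le j\le J}\tau^{\g j-1}v_j$, substitute these ``partial sum $+$ remainder'' decompositions into \eqref{eq:sysb01}, and expand the quadratic terms; by the formal part, all polynomial-times-polynomial contributions up to order $\tau^{\g(J+1)-1}$ (resp.\ $\tau^{\g(J+1)-2}$) cancel the $\tau$-derivative of the partial sums, so the forcing for $(\partial_\tau\beta_J,\partial_\tau\omega_J)$ consists only of polynomial-times-polynomial pieces of order at least the next power, pieces linear in $(\beta_J,\omega_J)$ with bounded coefficients, and pieces quadratic in $(\beta_J,\omega_J)$. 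Using the previous level of the induction to control $(\beta_J,\omega_J)$ a priori as $O(\tau^{\g J})$ and $O(\tau^{\g J-1})$, an energy estimate in $H^s\times Y^s_{(n/(\g+1),\I],2}$ of the kind proved for Proposition~\ref{prop:existence-sys}, followed by Gronwall (the weight $\tau^{\g-2}$ being integrable for $\g>1$), upgrades these to $O(\tau^{\g(J+1)})$ and $O(\tau^{\g(J+1)-1})$. Finally, \eqref{eq:p-exp} follows by applying Lemma~\ref{lem:int} to $\nabla(\phi_0-\sum_{j\le J}\tau^{\g j-1}\varphi_j)=\omega_J$, since $\phi_0$ and every $\varphi_j$ tend to $0$ at spatial infinity.

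\emph{Main difficulty.} The delicate point is the index bookkeeping in the inductive step: after the ``partial sum $+$ remainder'' substitution one must verify that \emph{every} uncancelled term produced by expanding $Q_1(b_0,w_0)$, $Q_2(w_0,w_0)$ and $\tau^{\g-2}Q_3(b_0,b_0)$ is of order at least $\tau^{\g(J+1)-1}$ in the $b_0$-equation (respectively $\tau^{\g(J+1)-2}$ in the $w_0$-equation), so that the Gronwall step genuinely gains one power of $\tau^\g$. This amounts to a careful accounting of the exponents $\g k_1+\g k_2$ against $\g j$, shifted by $\g-2$ in the $Q_3$-term, while keeping all quantities in the correct $Y$-spaces under Hardy--Littlewood--Sobolev; the singularity of $\tau^{\g-2}$ at $\tau=0$ causes no trouble because it is integrable for $\g>1$, exactly as in the existence argument.
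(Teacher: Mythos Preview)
Your proposal is correct and follows essentially the same approach as the paper's proof: define the remainders $\beta_J=b_0-\sum_{j\le J}\tau^{\g j}a_j$ and $\omega_J=w_0-\sum_{j\le J}\tau^{\g j-1}v_j$, derive the recursions for $a_j,v_j$ by matching powers, and close an induction on $J$ by estimating the remainder equations; the passage to $\varphi_j$ via $\nabla\varphi_j=v_j$ and Lemma~\ref{lem:int} is identical.

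The only notable differences are presentational. The paper organizes the inductive step through an explicit telescoping identity (writing $Q_1(b_0,w_0)=Q_1(b_0,w_0-\widetilde v_l)+\sum_{l_1}Q_1(b_0-\widetilde a_{l_1},\tau^{\g(l-l_1)-1}v_{l-l_1})+\partial_\tau\widetilde a_l$ and similarly for $Q_2,Q_3$), which yields integral formulas for $b_0-\widetilde a_l$ and $w_0-\widetilde v_{l+1}$ that are estimated directly, giving $o$-bounds. You instead expand ``partial sum $+$ remainder'' and close by Gronwall, obtaining the slightly sharper $O(\tau^{\g(J+1)})$ and $O(\tau^{\g(J+1)-1})$; your two-step bootstrap (first upgrade $\omega_J$ using the a~priori bound on $\beta_J$, then upgrade $\beta_J$ using the improved $\omega_J$) is exactly what makes this work, since the $\beta_J$-equation contains the term $Q_1(a_0,\omega_J)$ which only becomes $O(\tau^{\g(J+1)-1})$ after $\omega_J$ has been upgraded. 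Both routes are standard and equivalent here.
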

\begin{remark}
Using the ``$\asymp$'' sign defined in Notation \ref{not:asymp},
the above three expansions \eqref{eq:t-exp1}, \eqref{eq:t-exp2},
and \eqref{eq:p-exp} can be written as
$b_0(\tau) \asymp \sum_{j=0}^J \tau^{\g j} a_j$ in $H^\I$, 
$w_0(\tau) \asymp  \sum_{j=1}^J \tau^{\g j -1} v_j$ in $Y^\I_{(n/(\g+1),\I],2}$,
and $\phi_0(\tau) \asymp  \sum_{j=1}^J \tau^{\g j -1} \varphi_j$
in $Y^\I_{(n/\g,\I],(n/(\g+1),\I]}$, respectively
\end{remark}
\begin{proof}
We first note that,
by the definitions of $Q_i$, it  follows for all $s>n/2+1$ that
\begin{align*}
	&\norm{Q_1(b,v)}_{H^s}  \le C_s \norm{b}_{H^{s+1}} \norm{v}_{Y^{s+1}_{(n/(\g+1),\I],2}}, \\
	&\norm{Q_2(v_1,v_2)}_{Y^{s}_{(n/(\g+1),\I],2}}  \le C_s \norm{v_1}_{Y^{s}_{(n/(\g+1),\I],2}}
	\norm{v_2}_{Y^{s+1}_{(n/(\g+1),\I],2}}, \\
	&\norm{Q_3(b_1,b_2)}_{Y^{s}_{(n/(\g+1),\I],2}}  \le C_s \norm{b_1}_{H^s} \norm{b_2}_{H^s}.
\end{align*}
Therefore, we see that $a_l$ and $v_l$ are bounded in $H^\I$ and $Y^\I_{(n/(\g+1),\I],2}$, respectively.
For simplicity, in this proof we denote $Y^s_{(n/(\g+1),\I],2}$ by $Y^s$.
Denote $\sum_{j=0}^l \tau^{\g j}a_j$ and $\sum_{j=1}^l \tau^{\g j -1} v_j$ by
$\widetilde{a}_l$ and $\widetilde{v}_l$, respectively. 
Then, it suffices to show that
\begin{align}
	&\norm{b_0 -\widetilde{a}_l}_{L^\I([0,\tau];H^\I)} = o(\tau^{\g l}) & & \forall l\ge 0, \label{eq:t-expb}\\
	&\norm{w_0 -\widetilde{v}_l}_{L^\I([0,\tau];Y^\I)} = o(\tau^{\g l -1}) & & \forall l \ge 1.
	\label{eq:t-expw}
\end{align}
\subsection*{Step 1}
Since $b_0 \in C([0,T];H^\I)$ and $b_0(0)=a_0 = \widetilde{a}_0$, \eqref{eq:t-expb} is trivial if $l=0$.
We show \eqref{eq:t-expw} for $l=1$.
By the second equation of \eqref{eq:sysb01}, it holds for $s>n/2+1$ that
\begin{align*}
	\norm{w_0(\tau)}_{Y^s}
	& {} \le C_1\int_0^\tau \norm{w_0(t)}_{Y^s} dt + C_2 \int_0^\tau t^{\g-2} dt,\\
	& {} \le C_1 \tau \norm{w_0}_{L^\I((0,\tau];Y^s)} + C_2^\prime \tau^{\g-1}.
\end{align*}
where $C_1$ depends on $s$ and $C([0,T];Y^\I)$ norm of $w_0$,
and $C_2$ depends on $s$ and $C([0,T];H^\I)$ norm of $b_0$.
The right hand side is monotone increasing in time, hence this gives
\[
	\norm{w_0}_{L^\I((0,\tau];Y^s)}
	\le C_1 \tau \norm{w_0}_{L^\I((0,t];Y^s)} + C_2^\prime \tau^{\g-1}.
\]
Choose $\tau$ so small that $C_1 \tau \le 1/2$. Then, we obtain
\[
	\norm{w_0}_{L^\I((0,\tau];Y^\I)} = O(\tau^{\g-1})
\]
since $s>n/2+1$ is arbitrary.
Again by the equation, it holds that
\[
	w_0 - \widetilde{v}_1 = \int_0^\tau Q_2(w_0,w_0) dt + \int_0^\tau t^{\g-2} (Q_3(b_0,b_0-a_0) + Q_3(b_0-a_0,a_0)) dt.
\]
Since $w_0$ is order $O(\tau^{\g-1})$ in $L^\I((0,\tau];Y^\I)$,
the first integral of the right hand side 
is order $O(\tau^{2\g-1})$ in $L^\I((0,\tau];Y^\I)$.
Similarly, the fact that $b_0-a_0$ is order $o(1)$ in $L^\I((0,\tau];H^\I)$ shows
the second integral is order $o(\tau^{\g-1})$ in $L^\I((0,\tau];Y^\I)$,
which proves \eqref{eq:t-expw} for $l=1$.
\subsection*{Step 2}
We prove \eqref{eq:t-expb} and \eqref{eq:t-expw} for large $l$ by induction.
By the definition of $a_j$, an explicit calculation shows
\begin{align*}
	\partial_\tau b_0 = {} & Q_1 (b_0,w_0) 
	=  Q_1(b_0,w_0-\widetilde{v}_1) + Q_1(b_0-\widetilde{a}_0, \tau^{\g-1}v_1)
		+ \partial_\tau (\tau^{\g}a_1 ) \\	
	= {} & Q_1 (b_0,w_0-\widetilde{v}_2) + Q_1(b_0 - \widetilde{a}_0,\tau^{2\g-1}v_2 ) + Q_1(b_0 -\widetilde{a}_1 ,\tau^{\g-1}v_1) \\
	& {} + \partial_\tau (\tau^{\g}a_1 + \tau^{2\g}a_2) =  \cdots  \\
	= {} & Q_1 ( b_0, w_0 - \widetilde{v}_l )
		+ \sum_{l_1 =0}^{l-1} Q_1( b_0 - \widetilde{a}_{l_1}, \tau^{\g (l-l_1) -1} v_{l-l_1}) 
		+ \partial_\tau \( \sum_{j=1}^l \tau^{\g j} a_j \).
\end{align*}
Similarly, it holds that
\begin{align*}
	\partial_\tau w_0 = {} & Q_2(w_0,w_0) + \tau^{\g-2} Q_3 (b_0,b_0) = \cdots  \\
	= {} &  Q_2 ( w_0, w_0 - \widetilde{v}_l ) + \sum_{l_1 =1}^{l} Q_2\( w_0 - \widetilde{v}_{l_1},
	\tau^{\g (l-l_1+1) -1} v_{l-l_1+1}\)  \\
	& {} + \tau^{\g-2}\(Q_3 ( b_0, b_0 - \widetilde{a}_l ) 
	+ \sum_{l_1 =0}^{l} Q_3( b_0 - \widetilde{a}_{l_1}, \tau^{\g (l-l_1) } a_{l-l_1}) \)\\
	& {} + \partial_\tau \( \sum_{j=1}^{l+1} \tau^{\g j -1} v_j \).
\end{align*}
Integrating these identities with respect to time, we obtain
\begin{equation}\label{eq:t-expbl}
	b_0 - \widetilde{a}_l = \int_0^\tau \(Q_1 ( b_0, w_0 - \widetilde{v}_l )
		+ \sum_{l_1 =0}^{l-1} Q_1( b_0 - \widetilde{a}_{l_1}, t^{\g (l-l_1) -1} v_{l-l_1}) \)dt
\end{equation}
and
\begin{align}\label{eq:t-expwl}
	w_0 - \widetilde{v}_{l+1} = {} & \int_0^\tau 
	\(Q_2 ( w_0, w_0 - \widetilde{v}_l ) + \sum_{l_1 =1}^{l} Q_2\( w_0 - \widetilde{v}_{l_1},
	t^{\g (l-l_1+1) -1} v_{l-l_1+1}\) \)dt  \\
	& {} + \int_0^\tau t^{\g-2}\(Q_3 ( b_0, b_0 - \widetilde{a}_l ) 
	+ \sum_{l_1 =0}^{l} Q_3( b_0 - \widetilde{a}_{l_1}, t^{\g (l-l_1) } a_{l-l_1}) \)dt. \nonumber
\end{align}
Now, let $L \ge 1$ be an integer.
If \eqref{eq:t-expb} holds for $l \le L-1$ and \eqref{eq:t-expw} holds for $l \le L$,
then we see that \eqref{eq:t-expbl} gives \eqref{eq:t-expb} with $l=L$.
On the other hand, if both \eqref{eq:t-expb} and \eqref{eq:t-expw} hold for $l \le L$,
then we obtain \eqref{eq:t-expw} with $l=L+1$ from \eqref{eq:t-expwl}.
\smallbreak

The expansion of $\phi_0$ is an immediate consequence of 
 the expansion of $w_0=\nabla \phi_0$.
Since 
\[
	Q_2(v_{k_1},v_{k_2}) + Q_2(v_{k_2},v_{k_1})
	= -\nabla (v_{k_1}\cdot v_{k_2}) = -\nabla \(\frac12 v_{k_1}\cdot v_{k_2}+ \frac12 v_{k_2}\cdot v_{k_1}\),
\]
we deduce from the definition $v_j$ that
\begin{multline*}
	\nabla \varphi_j = v_j =
	\frac1{1-\g j} \nabla\Bigg[ 
	\sum_{k_1\ge 1, k_2 \ge 1, k_1+k_2=j}\frac12 (\nabla \varphi_{k_1} \cdot \nabla \varphi_{k_2}) \\
	+ \sum_{k_1\ge 0, k_2 \ge 0, k_1+k_2=j-1}\l(|y|^{-\g}*(a_{k_1}\overline{a_{k_2}})
	\Bigg] .
\end{multline*}
By Lemma \ref{lem:int}, $\varphi_j$ belongs to $Y^\I_{(n/\g,\I],(n/(\g+1),\I]}$.
\end{proof}

\subsection{Asymptotic behavior of the phase-amplitude form solution}\label{subsec:expansion}
The following proposition completes the proof of the theorem.
\begin{proposition}
Let Assumption \ref{asmp:1} satisfied and $\al \ge 1$.
Let $T>0$ and $(a^h,v^h)$ be as in Proposition \ref{prop:p-existence}.
Let $(b_0,w_0):=(a^h,v^h)_{|h=0}$.
Then, there exists $(b_{\mathrm{equ}},w_{\mathrm{equ}})\in C([0,T];H^\I\times Y^\I_{(n/(\g+1),\I],2})$ such that
 the following asymptotics holds:
\begin{equation}\label{eq:tmp006}
\begin{aligned}
	a^h(\tau) &{}= b_0(\tau) + h b_{\mathrm{equ}}(\tau-h^{\AG}) + o(h) \IN C([h^{\AG},T];H^\I), \\
	v^h(\tau) &{}= w_0(\tau) + h w_{\mathrm{equ}}(\tau-h^{\AG}) + o(h) \IN C([h^{\AG},T];Y^\I_{(n/(\g+1),\I],2}).
\end{aligned}
\end{equation}
Moreover, $(b_{\mathrm{equ}},w_{\mathrm{equ}})$ solves
\begin{equation}\label{eq:tmp003}
\left\{
  \begin{aligned}
	\d_\tau b_{\mathrm{equ}} ={}& Q_1(b_0,w_{\mathrm{equ}}) + Q_1(b_{\mathrm{equ}},w_0)
	 +i\frac{1}{2}\Delta b_0 , \\
	\d_\tau w_{\mathrm{equ}} ={}&  Q_2(w_0,w_{\mathrm{equ}}) + Q_2(w_{\mathrm{equ}},w_0) + \tau^{\g-2} \(  Q_3(b_0,b_{\mathrm{equ}}) + Q_3(b_{\mathrm{equ}},b_0)\) 
  \end{aligned}
\right.
\end{equation}
with the data
\begin{align}\label{eq:tmp004}
	b_{\mathrm{equ}|\tau=0}&{}= 0,&
	w_{\mathrm{equ}|\tau=0}&{} =
	\begin{cases}
		0 & \text{ if } \al >1, \\
		-v_1 & \text{ if } \al =1,
	\end{cases}
\end{align}
where $v_1$ is defined in Proposition \ref{prop:t-expansion}.
\end{proposition}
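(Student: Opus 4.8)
The plan is to carry out three steps: construct the pair $(b_{\mathrm{equ}},w_{\mathrm{equ}})$, derive the equation satisfied by the remainder of the ansatz \eqref{eq:tmp006}, and close a Grenier-type energy estimate on that remainder. First I would obtain $(b_{\mathrm{equ}},w_{\mathrm{equ}})$ by applying Proposition \ref{prop:existence-sys} to the \emph{linear} system \eqref{eq:tmp003}: take $c_1^h=c_2^h=0$, $r^h=0$, $f^h(\tau)=\tau^{\g-2}$ (which is in $L^1$ near $\tau=0$ since $\g>1$), $B_1^h=B_2^h=b_0$, $W_1^h=W_2^h=w_0$, $R_1^h=\frac{i}{2}\Delta b_0$, and $R_2^h=0$; all of these are independent of $h$ and lie in the required spaces because $(b_0,w_0)\in C([0,T];H^\I\times Y^\I_{(n/(\g+1),\I],2})$ by Proposition \ref{prop:p-existence}. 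This yields a solution in $C([0,T];H^s\times Y^{s+1}_{q_0,2})$ for every admissible $s$ and $q_0$, and, since the system is linear in the unknown, the solution lives on the whole interval on which $(b_0,w_0)$ is defined; intersecting over $s$ and $q_0$ gives $(b_{\mathrm{equ}},w_{\mathrm{equ}})\in C([0,T];H^\I\times Y^\I_{(n/(\g+1),\I],2})$, and we impose the data \eqref{eq:tmp004}. From the equation and the behaviour of $w_0$ near $\tau=0$ one reads off that $\d_\tau b_{\mathrm{equ}}$ is bounded up to $\tau=0$ and $\d_\tau w_{\mathrm{equ}}=O(\tau^{\g-2})$ there, so that $b_{\mathrm{equ}}(\tau)=O(\tau)$ and $w_{\mathrm{equ}}(\tau)=w_{\mathrm{equ}}(0)+O(\tau^{\g-1})$ as $\tau\to0$, facts used below.

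Next I would introduce the remainder on $\tau\in[h^{\AG},T]$,
\[
	r_b^h(\tau):=a^h(\tau)-b_0(\tau)-h\,b_{\mathrm{equ}}(\tau-h^{\AG}),\qquad r_w^h(\tau):=v^h(\tau)-w_0(\tau)-h\,w_{\mathrm{equ}}(\tau-h^{\AG}),
\]
and compute its equation by subtracting \eqref{eq:sysb01} and the time-translate of \eqref{eq:tmp003} from \eqref{eq:sysav1}, using the bilinearity of $Q_1,Q_2,Q_3$. The $O(h^0)$ terms cancel exactly by \eqref{eq:sysb01}; the $O(h^1)$ terms cancel against $h\,\d_\tau b_{\mathrm{equ}}(\tau-h^{\AG})$ and $h\,\d_\tau w_{\mathrm{equ}}(\tau-h^{\AG})$ \emph{up to} the discrepancies caused by the time shift in the coefficients ($b_0(\tau)$ against $b_0(\tau-h^{\AG})$, $w_0(\tau)$ against $w_0(\tau-h^{\AG})$, and $\tau^{\g-2}$ against $(\tau-h^{\AG})^{\g-2}$ inside the $b_{\mathrm{equ}},w_{\mathrm{equ}}$ equations). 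The upshot is that $(r_b^h,r_w^h)$ solves a system of the form \eqref{eq:system1}--\eqref{eq:system2} with $c_i^h$ bounded, $r^h=h/2$, $f^h(\tau)=\tau^{\g-2}$, with $b_0,\,h b_{\mathrm{equ}},\,w_0,\,h w_{\mathrm{equ}}$ playing the roles of $B_i^h,W_i^h$, and with a source $R_i^h$ which I claim is $o(h)$ in $L^1_\tau(H^s)$, resp.\ $L^1_\tau(Y^{s+1}_{q_0,2})$, for every $s$; checking this claim is the heart of the matter. The shift discrepancies give terms such as $h\,Q_1(b_0(\tau)-b_0(\tau-h^{\AG}),\cdot)$ and $\frac{ih}{2}\Delta(b_0(\tau)-b_0(\tau-h^{\AG}))$, both $O(h^{1+\AG})$ since $\d_\tau b_0$ is bounded on $[0,T]$; the pressure-factor mismatch contributes $h\,(\tau-h^{\AG})^{\g-2}Q_3(b_0(\tau)-b_0(\tau-h^{\AG}),b_{\mathrm{equ}})+h\,(\tau^{\g-2}-(\tau-h^{\AG})^{\g-2})Q_3(b_0,b_{\mathrm{equ}})$, which, using $b_{\mathrm{equ}}(\tau-h^{\AG})=O(\tau-h^{\AG})$ near $\tau=h^{\AG}$ and $\int_{h^{\AG}}^{T}\tau^{\g-2}\,d\tau=O(1)$, integrates to $O(h^{1+\AG})+O(h^{1+\g\AG})$; and the genuinely $O(h^2)$ terms are harmless. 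All are $o(h)$ because $\AG>0$. The initial value is $o(h)$ as well: $r_b^h(h^{\AG})=a_0-b_0(h^{\AG})=O(h^{\g\AG})$ by Proposition \ref{prop:t-expansion} (and $\g\AG>1$), while $r_w^h(h^{\AG})=-w_0(h^{\AG})-h\,w_{\mathrm{equ}}(0)$ equals $-w_0(h^{\AG})=O(h^{\AGG})$ when $\al>1$ (and $\AGG>1$), and, when $\al=1$, equals $-(w_0(h^{\AG})-h\,v_1)$, which is $o(h)$ precisely because $(h^{\AG})^{\g-1}=h$ when $\al=1$ and the choice $w_{\mathrm{equ}|\tau=0}=-v_1$ in \eqref{eq:tmp004} cancels the leading term $(h^{\AG})^{\g-1}v_1$ of the expansion of $w_0$ from Proposition \ref{prop:t-expansion}. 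This is the only place where the two cases of \eqref{eq:tmp004} diverge, and it is why the modified datum is needed in the nonlinear WKB case.

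Finally, with the remainder system in hand I would run the Grenier-type estimate recalled in the proof of Proposition \ref{prop:existence-sys}, with $E^h(\tau):=\norm{r_b^h(\tau)}_{H^s}^2+\norm{\nabla r_w^h(\tau)}_{H^s}^2$: the semilinear term $\frac{ih}{2}\Delta r_b^h$ is skew-adjoint and drops out of the estimate, the quadratic terms produce an $(E^h)^{3/2}$ contribution, the linear terms a contribution $C(1+\tau^{\g-2})E^h$ with $\int_{h^{\AG}}^{T}(1+\tau^{\g-2})\,d\tau=O(1)$ uniformly in $h$ (where $\g>1$ is used), and the source $o(h)\,(E^h)^{1/2}$. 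Since $E^h(h^{\AG})=o(h^2)$, a Gronwall argument together with a bootstrap absorbing the cubic term (legitimate once $E^h\le1$, which holds for small $h$) gives $E^h(\tau)=o(h^2)$ on $[h^{\AG},T]$; recovering the $L^p$ and decay part of the $Y$-norm of $r_w^h$ by integrating its equation and invoking the Hardy--Littlewood--Sobolev inequality and Lemma \ref{lem:int}, exactly as in the proof of Proposition \ref{prop:p-existence}, upgrades this to $\norm{r_b^h}_{C([h^{\AG},T];H^s)}+\norm{r_w^h}_{C([h^{\AG},T];Y^{s+1}_{q_0,2})}=o(h)$. As $s$ and $q_0$ are arbitrary, \eqref{eq:tmp006} follows, and $(b_{\mathrm{equ}},w_{\mathrm{equ}})$ solves \eqref{eq:tmp003}--\eqref{eq:tmp004} by construction. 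The main obstacle I anticipate is the bookkeeping in the middle step: verifying that \emph{every} error term produced by the $h$-dependent, receding initial time is genuinely $o(h)$, which forces the use of the time-expansion of Proposition \ref{prop:t-expansion} and, in the borderline case $\al=1$, the nonzero initial datum $-v_1$.
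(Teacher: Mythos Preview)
Your argument is correct, but it takes a genuinely different route from the paper. The paper does not construct $(b_{\mathrm{equ}},w_{\mathrm{equ}})$ first and then estimate a remainder; instead it sets
\[
b^h(\tau)=\frac{a^h(\tau+h^{\AG})-b_0(\tau+h^{\AG})}{h},\qquad
w^h(\tau)=\frac{v^h(\tau+h^{\AG})-w_0(\tau+h^{\AG})}{h},
\]
observes that $(b^h,w^h)$ solves a system of the exact shape \eqref{eq:system1}--\eqref{eq:system2} with $h$-dependent coefficients $c_i^h=h$, $r^h=h/2$, $f^h(t)=(t+h^{\AG})^{\g-2}$, $B_i^h=b_0$, $W_i^h=w_0$, $R_1^h=\tfrac{i}{2}\Delta b_0$, and then applies Proposition \ref{prop:existence-sys} \emph{once}: its existence part gives uniform bounds on $(b^h,w^h)$, and its convergence part (Assumption \ref{asmp:approximate-coef}) identifies the limit $(b_{\mathrm{equ}},w_{\mathrm{equ}})$ and the system \eqref{eq:tmp003} it solves. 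The only thing checked by hand is the convergence of the initial data \eqref{eq:tmp002}, via the time expansion of Proposition \ref{prop:t-expansion}, which is where the case split $\al>1$ versus $\al=1$ appears, exactly as in your argument.

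The practical difference is that all the ``shift discrepancies'' you carefully track --- the mismatches $b_0(\tau)-b_0(\tau-h^{\AG})$, $\tau^{\g-2}-(\tau-h^{\AG})^{\g-2}$, and so on --- are absorbed in the paper's approach into the single statement that the coefficients $(c_i^h,r^h,f^h,B_i^h,W_i^h,R_i^h)$ converge in the topologies of Assumption \ref{asmp:approximate-coef}; the convergence machinery of Proposition \ref{prop:existence-sys} then does the rest. Your route is more explicit and self-contained (you effectively re-prove the convergence half of Proposition \ref{prop:existence-sys} in this particular instance), at the cost of the bookkeeping you anticipate; the paper's route is shorter because that bookkeeping has already been packaged into the abstract proposition.
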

\begin{remark}
Since
\begin{align*}
	b_{\mathrm{equ}}(\tau-h^{\AG}) &{}= b_{\mathrm{equ}}(\tau) + o(1), &
	w_{\mathrm{equ}}(\tau-h^{\AG}) &{}= w_{\mathrm{equ}}(\tau) + o(1)
\end{align*}
by continuity, \eqref{eq:tmp006} implies
\begin{align*}
	a^h(\tau) &{}= b_0(\tau) + h b_{\mathrm{equ}}(\tau) + o(h) \IN C([h^{\AG},T];H^\I), \\
	v^h(\tau) &{}= w_0(\tau) + h w_{\mathrm{equ}}(\tau) + o(h) \IN C([h^{\AG},T];Y^\I_{(n/(\g+1),\I],2}).
\end{align*}
From this asymptotics and the transforms \eqref{eq:spct} and \eqref{eq:psi-ap},
we immediately obtain the asymptotics \eqref{eq:asymptotics}.
\end{remark}
\begin{proof}
Let $(a^h,v^h)$ be the solution to \eqref{eq:sysav1}--\eqref{eq:sysav2}.
Let $(b_0,w_0)$ be the solution to \eqref{eq:sysb01}--\eqref{eq:sysb02}.
We put
\begin{align*}
	b^h(\tau,y) &{}:= \frac{a^h(\tau +h^{\AG},y) - b_0(\tau+h^{\AG},y)}h, \\
	w^h(\tau,y) &{}:= \frac{v^h(\tau +h^{\AG},y) - w_0(\tau+h^{\AG},y)}h.
\end{align*}
Then, $(b^h,w^h)$ solves
\begin{equation}\label{eq:tmp001}
\left\{
  \begin{aligned}
	\d_\tau b^h ={}& h Q_1(b^h, w^h)  + Q_1(b_0,w^h) + Q_1(b^h,w_0)
	 +i\frac{1}{2}\Delta b_0 + i\frac{h}{2} \Delta b^h, \\
	\d_\tau w^h ={}& h Q_2(w^h,w^h)   + Q_2(w_0,w^h) + Q_2(w^h,w_0) \\
	&{}  + (\tau+h^{\AG})^{\g-2} \( h Q_3(b^h,b^h) + Q_3(b_0,b^h) + Q_3(b^h,b_0)\), 
  \end{aligned}
\right.
\end{equation}
\begin{align}\label{eq:tmp002}
	b^h_{|\tau=0}&= \frac{b_{0|\tau=0} - b_0(h^{\AG})}h, &
	w^h_{|\tau=0}&= \frac{w_{0|\tau=0} - w_0(h^{\AG})}h, 
\end{align}
We apply Proposition \ref{prop:existence-sys} with these initial data and
$c_1^h = c_2^h=h$, $r^h=h/2$, $f^h(t)=(t+h^{\AG})^{\g-2}$, $B_1^h=B_2^h=b_0$,
$W_1^h=W_2^h=w_0$, $R_1^h=(i/2)\Delta b_0$, and $R_2^h =0$.
Note that the initial data \eqref{eq:tmp002} is uniformly bounded if $\al \ge 1$
since an application of \eqref{eq:t-exp1} and \eqref{eq:t-exp2} gives
\begin{align*}
	\frac{b_{0|\tau=0} - b_0(h^{\AG})}h = {}& O(h^{\frac{\g (\al-1)}{\g-\al} +\frac{\al}{\g -\al}})
	\IN H^{\I}, \\
	\frac{w_{0|\tau=0} - w_0(h^{\AG})}h = {}& O(h^{\frac{\g (\al-1)}{\g-\al}})
	\IN Y^\I_{(n/(\g+1),\I],2}.
\end{align*}
The term $R_1^h$ satisfies $\tSobn{R_1^h}{s} \le \tSobn{b_0}{s+2}/2$.
Therefore, if $s-2>n/2+1$, that is, if $s>n/2+3$ then
Proposition \ref{prop:existence-sys} provides the unique solution
$(b^h,w^h) \in C([0,T-h^{\AG}];H^{s-2} \times Y^{s-1}_{(n/(\g+1),\I],2}) $ for $h \in [0,1]$.
Moreover, $(b^h,w^h)$ converges to $(\ti{b},\ti{w}):=(b^h,w^h)_{|h=0}$ in 
$C([0,T-h^{\AG}];H^{s-4} \times Y^{s-3}_{(n/(\g+1),\I],2}) $.
It follows from \eqref{eq:t-exp2} that
$\lim_{h\to 0} w^h_{|\tau=0} = 0$ if $\al > 1$ and $\lim_{h\to 0} w^h_{|\tau=0} = -v_1$ if $\al = 1$.
Hence, $(b_{\mathrm{equ}},w_{\mathrm{equ}})$ solves \eqref{eq:tmp003}--\eqref{eq:tmp004}.
\end{proof}

\section{Supercritical caustic and Supercritical WKB case}\label{sec:sscase}
\subsection{Result}
In this section, we treat the case $\al <1<\g$.
As presented in Section \ref{subsec:strategy}, the asymptotic
behavior of the solution \eqref{eq:r4}--\eqref{eq:odata} boils down to
the asymptotic behavior of the solution to \eqref{eq:sys}.
By means of Lemma \ref{lem:int}, we work with $(a^h,v^h):=(a^h,\nabla \phi^h)$ which solves \eqref{eq:sysav1}--\eqref{eq:sysav2}.

The main difficulty lies in the fact that
the initial data \eqref{eq:sysav2} is moving at the speed $h^{\AG}$
(see Section \ref{subsec:summary2}).
From the expansion \eqref{eq:tmpexp} of $(b_0,w_0):=(a^h,v^h)_{|h=0}$,
we deduce that $(a^h,v^h)$ contains the terms of order
\[
	O(h^{\frac{\al(\g i -1)}{\g-\al}}) \quad \text{ and } \quad
	O(h^{\frac{j\al\g }{\g-\al}})
\]
for all $i,j \ge 0$.
Note that some of these orders are less than one if $\al < 1$.
This is the feature of the supercritical WKB case,
and the problem comes from this point.
Moreover,
the above terms interact each other and there appear all the terms whose
order is given by the finite combination of $h^{\frac{\al(\g i -1)}{\g-\al}}$
and $h^{\frac{j\al\g }{\g-\al}}$.
Thus, we see that $(a^h,v^h)$ contains all the terms whose order is written as
\[
	O(h^{\frac{\al (\g l_1 - l_2)}{\g -\al}}), \quad 0 \le l_2 \le l_1.
\]
For our purpose, we determine all these terms up to $O(h^1)$.
Therefore, it is natural to introduce a set $P$ defined by
\begin{equation}\label{def:P}
	P := \left\{ \frac{\al (\g l_1 - l_2)}{\g -\al}; 0 \le l_2 \le l_1, \quad 0 \le \frac{\al (\g l_1 - l_2)}{\g -\al} <1
	\right\}.
\end{equation}
Set $N:= \sharp P -1$, and number the elements of $P$ as $0=p_0<p_1<\cdots<p_{N} < 1$.
For any $p_{i_1},p_{i_2} \in P$, either
$p_{i_1} + p_{i_2} \in P$ or $p_{i_1}+p_{i_2} \ge 1$ holds.
For example, if $\g = \sqrt{3}$ and $\al = \sqrt{3}/4$ then,
$p_0=0$,
\begin{equation}\label{eq:sampleP1}
\begin{aligned}
	p_1 &{}=\frac{\sqrt{3}-1}{3} = \frac{\al(\g -1)}{\g-\al}, &
	p_2 &{}=\frac{2(\sqrt{3}-1)}{3} = \frac{\al(2\g -2)}{\g-\al}, \\
	p_3 &{}=\frac{\sqrt{3}}{3} = \frac{\al\g }{\g-\al}, &
	p_4 &{}=\sqrt{3}-1 = \frac{\al(3\g -3)}{\g-\al}, \\
	p_5 &{}=\frac{2\sqrt{3}-1}{3} = \frac{\al(2\g -1)}{\g-\al}, & &
\end{aligned}
\end{equation}
and $N=5$;
and if $\g=2$ and $\al = 1/3$, then  $p_0=0$,
\begin{equation}\label{eq:sampleP2}
\begin{aligned}
	p_1 &{}=\frac15 = \frac{\al(\g -1)}{\g-\al}, \qquad
	p_2 =\frac25 = \frac{\al(2\g -2)}{\g-\al}= \frac{\al\g}{\g-\al}, \\
	p_3 &{}=\frac35 = \frac{\al(3\g -3)}{\g-\al}= \frac{\al(2\g-1) }{\g-\al}, \\
	p_4 &{}=\frac45 = \frac{\al(4\g -4)}{\g-\al}= \frac{\al(3\g -2)}{\g-\al}
	= \frac{2\al\g}{\g-\al},
\end{aligned}
\end{equation}
and $N=4$.

To state the result, we also introduce several systems.
Let $Q_1$, $Q_2$, and $Q_3$ be quadratic forms defined in \eqref{eq:q-1},
\eqref{eq:q-2}, and \eqref{eq:q-3}, respectively.
Let $a_l$ and $v_l$ be sequences given in Proposition \ref{prop:t-expansion}.
Then, for any $0 \le i \le N$, we introduce
\begin{equation}\label{eq:tmp301}
	\left\{
	\begin{aligned}
		\partial_\tau b_i ={}& \sum_{p_j+p_k = p_i} Q_1(b_{p_j},w_{p_k}), \\
		\partial_\tau w_i ={}& \sum_{p_j+p_k = p_i} \(Q_2(w_{p_j},w_{p_k})+\tau^{\g-2}Q_3(b_{p_j},b_{p_k})\), 
 	\end{aligned}
 	\right.
\end{equation}
\begin{equation}\label{eq:tmp302}
\begin{aligned}
	b_i(0) ={}&
	\begin{cases}
	-a_l & \text{ if }\exists l \text{ such that } p_i=\frac{\al\g l}{\g-\al} , \\
	0 & \text{ otherwise},
	\end{cases}
	\\
	w_i(0) ={}&
	\begin{cases}
	-v_{l^\prime} & \text{ if } \exists l^\prime
	\text{ such that } p_i=\frac{\al\g l^\prime-\al}{\g-\al}, \\
	0 & \text{ otherwise}.
	\end{cases}
\end{aligned}
\end{equation}
We also introduce a system for $(b_{\mathrm{equ}},w_{\mathrm{equ}})$
\begin{equation}\label{eq:tmp304}
	\left\{
	\begin{aligned}
		\d_\tau b_{\mathrm{equ}} ={}& Q_1(b_0,w_{\mathrm{equ}}) + Q_1(b_{\mathrm{equ}},w_0) +
		\sum_{p_j+p_k = 1} Q_1(b_{p_j},w_{p_k}) + \frac{i}{2}\Delta b_0, \\
		\d_\tau w_{\mathrm{equ}} ={}& Q_2(w_0,w_{\mathrm{equ}}) + Q_2(w_{\mathrm{equ}},w_0) + 
		\tau^{\g-2}(Q_3(b_0,b_{\mathrm{equ}}) + Q_3(b_{\mathrm{equ}},b_0)) \\
		&{} + \sum_{p_j+p_k = 1} \(Q_2(w_{p_j},w_{p_k})+\tau^{\g-2}Q_3(b_{p_j},b_{p_k})\), \\
 	\end{aligned}
 	\right.
\end{equation}
\begin{equation}\label{eq:tmp305}
\begin{aligned}
	b_{\mathrm{equ}}(0) ={}&
	\begin{cases}
	-a_l & \text{ if }\exists l \text{ such that } 1=\frac{\al\g l}{\g-\al}, \\
	0 & \text{ otherwise},
	\end{cases}
	\\
	w_{\mathrm{equ}}(0) ={}&
	\begin{cases}
	-v_{l^\prime} & \text{ if }\exists l^\prime \text{ such that }1=\frac{\al\g l^\prime-\al}{\g-\al}, \\
	0 & \text{ otherwise},
	\end{cases}
\end{aligned}
\end{equation}
where $(b_0,w_0)$ is the solution of \eqref{eq:lsys}.
If there is no pair $(j,k)$ such that $p_j + p_k=1$, we let $\sum_{p_j+p_k = 1} \equiv 0$.
It may happen (see \eqref{eq:sampleP1}).

\begin{theorem}\label{thm:sscase}
Let assumption \ref{asmp:1} be satisfied. Assume $0<\al <1$.
Let $P$ be as in \eqref{def:P} and $N=\sharp P -1$.
Then, there exists an existence time $T>0$ independent of $\eps$.
There also exist
$(b_j,\phi_j) \in C([0,T];H^\I \times Y^\I_{(n/\g,\I],(n/(\g+1),\I]})$
($0 \le j \le N$)
such that  $(b_i,w_i):=(b_i,\nabla \phi_i)$ solves \eqref{eq:tmp301}--\eqref{eq:tmp302},
 and
$(b_{\mathrm{equ}}, \phi_{\mathrm{equ}}) \in C([0,T]; H^\I \times Y^\I_{(n/\g,\I],(n/(\g+1),\I]})$
such that $(b_{\mathrm{equ}},w_{\mathrm{equ}}):=(b_{\mathrm{equ}},\nabla \phi_{\mathrm{equ}})$ solves \eqref{eq:tmp304}--\eqref{eq:tmp305}.
Moreover, the followings hold:
\begin{enumerate}
\item $\phi_0(\tau) \asymp \sum_{j=1}^\I \tau^{\g j -1} \varphi_j$
(in the sense of \eqref{eq:p-exp}).
\item The solution $u^\eps$ to \eqref{eq:r4}--\eqref{eq:odata}
satisfies the following asymptotics for all $s \ge 0$:
\begin{equation}\label{eq:ssasymptotics}
	\sup_{t \in [0,1-T^{-1}\eps^{\al/\g}]} \Lebn{|J^\eps|^s \( u^\eps(t) e^{-i\Phi^\eps (t)}
	- \frac{1}{(1-t)^{n/2}} A^\eps(t) e^{i\frac{|\cdot|^2}{2\eps(t-1)}} \)}{2} \to 0
\end{equation}
as $\eps \to 0$ with
\begin{equation}\label{def:ssPhi}
	\Phi^\eps(t,x) = \eps^{\frac{\al}{\g}-1} \(
	\phi_0\(\frac{\eps^{\frac{\al}{\g}}}{1-t},\frac{x}{1-t}\) 
	+ \sum_{j=1}^N \eps^{(1-\frac{\al}{\g})p_j} \phi_j\(\frac{t\eps^{\frac{\al}{\g}}}{1-t},\frac{x}{1-t}\) \)
\end{equation}
and
\begin{equation}\label{def:ssA}
	A^\eps(t,x) = 
	b_0\(\frac{\eps^{\frac{\al}{\g}}}{1-t},\frac{x}{1-t} \)
	\exp \(i\phi_{\mathrm{equ}}\(\frac{\eps^{\frac{\al}{\g}}}{1-t},\frac{x}{1-t} \)\) .
\end{equation}
\end{enumerate}
\end{theorem}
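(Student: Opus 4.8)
The plan is to run the scheme of Section~\ref{sec:proof} one step further. As in Proposition~\ref{prop:p-existence}, the semiclassical conformal transform \eqref{eq:spct} and the modified Madelung transform \eqref{eq:grenier} turn \eqref{eq:r4}--\eqref{eq:odata} into the system \eqref{eq:sysav1}--\eqref{eq:sysav2} for $(a^h,v^h)=(a^h,\nabla\phi^h)$ on $[h^{\AG},T+h^{\AG}]$ with $T$ independent of $h$; the solution exists, is bounded uniformly in $h$, and converges as $h\to0$ to the solution $(b_0,w_0)$ of \eqref{eq:lsys}, which in addition admits the near-$\tau=0$ expansions of Proposition~\ref{prop:t-expansion}. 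This last fact is exactly statement~(1). Everything then reduces to pushing the $h\to0$ expansion of $(a^h,v^h)$ down to order $h^1$; because $\al<1$, the exponents $\AG$ and $\AGG$ of the initial-time mismatch $b_0(h^{\AG})-a_0=O(h^{\frac{\al\g}{\g-\al}})$, $\phi_0(h^{\AG})=O(h^{\frac{\al(\g-1)}{\g-\al}})$ are now $<1$, and interactions generate all the intermediate orders collected in the set $P$ of \eqref{def:P}.

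\textbf{Construction of the correction terms.} Once $(b_0,w_0)$ is frozen, the systems \eqref{eq:tmp301}--\eqref{eq:tmp302} and \eqref{eq:tmp304}--\eqref{eq:tmp305} are \emph{linear} symmetric-hyperbolic systems for $(b_i,w_i)$ and $(b_{\mathrm{equ}},w_{\mathrm{equ}})$, because in each sum $\sum_{p_j+p_k=p_i}$ the contributions with $j=0$ or $k=0$ are linear in the unknown while those with $p_j,p_k>0$ (and the term $\frac i2\Delta b_0$ in \eqref{eq:tmp304}) are inhomogeneous terms built from strictly lower order data. Ordering $P$ increasingly and inducting on $i$, each of these falls under Proposition~\ref{prop:existence-sys} with $c_1=c_2=0$, $r=0$, $f(\tau)=\tau^{\g-2}\in L^1$, $B_i=b_0$, $W_i=w_0$, and $R_i$ the source, so all $(b_i,w_i)$ and $(b_{\mathrm{equ}},w_{\mathrm{equ}})$ exist on a common interval $[0,T]$ in $H^\I\times Y^\I_{(n/(\g+1),\I],2}$, and Lemma~\ref{lem:int} reconstructs the phases $\phi_i,\phi_{\mathrm{equ}}$ in $Y^\I_{(n/\g,\I],(n/(\g+1),\I]}$. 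One also needs the near-$\tau=0$ time expansions of the $(b_i,w_i)$, obtained verbatim as in Proposition~\ref{prop:t-expansion}: the powers of $\tau$ that appear, evaluated at $\tau=h^{\AG}$, produce only $h$-powers of the form $h^{\frac{\al(\g l_1-l_2)}{\g-\al}}$, i.e. again in $P$ (or $\ge1$). The initial conditions \eqref{eq:tmp302} and \eqref{eq:tmp305} are precisely the values $-a_l$, $-v_{l'}$ (or $0$) needed to cancel these.

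\textbf{Asymptotic expansion and closing the estimate.} After translating the initial time to $0$ via $\tau=t+h^{\AG}$, form the approximate pair
\[
	a^h_{\mathrm{app}}(\tau)= b_0(\tau)+\sum_{i=1}^N h^{p_i}b_i(\tau-h^{\AG})+h\,b_{\mathrm{equ}}(\tau-h^{\AG}),
\]
and $v^h_{\mathrm{app}}$ likewise (the shifted argument $\tau-h^{\AG}=t\eps^{\al/\g}/(1-t)$ being the one in \eqref{def:ssPhi}). I would substitute $(a^h_{\mathrm{app}},v^h_{\mathrm{app}})$ into \eqref{eq:sysav1}: by construction the quadratic self-interactions of order $h^p$ with $p\in P$ are absorbed by the sources $\sum_{p_j+p_k=p}$ (using that $P$ is stable under addition modulo $[0,1[$), those of order exactly $h$ by the source in \eqref{eq:tmp304}, the $O(h)$ term $\frac i2\Delta b_0$ by $b_{\mathrm{equ}}$, and the rest is $O(h^{1+\mu})$ for some $\mu>0$; the extra discrepancies caused by the time shift and by $\tau^{\g-2}$ versus $(\tau-h^{\AG})^{\g-2}$ are, thanks to $\g>1$ and the time expansions, integrable in $\tau$ and of size $o(h)$ or again of an order in $P$ already carried by a correction term. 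The remainder $(a^h-a^h_{\mathrm{app}},v^h-v^h_{\mathrm{app}})$ then solves the linearization of \eqref{eq:sysav1} about $(a^h_{\mathrm{app}},v^h_{\mathrm{app}})$ with $o(h)$ source and, by the telescoping of the time expansions at $\tau=h^{\AG}$, $o(h)$ initial data; the Grenier-type energy estimate from the proof of Proposition~\ref{prop:existence-sys} plus Gronwall give $(a^h-a^h_{\mathrm{app}},v^h-v^h_{\mathrm{app}})=o(h)$ in $C([h^{\AG},T];H^{s_0-c}\times Y^{s_0-c}_{(n/(\g+1),\I],2})$ with a fixed derivative loss $c$. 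Finally, undoing \eqref{eq:grenier}--\eqref{eq:spct} and using that $|J^\eps|^s$ becomes $|\nabla_y|^s$ after the transform, $\phi^h/h=\Phi^\eps/\eps^{\frac\al\g-1}\cdot\eps^{\frac\al\g-1}/h+\phi_{\mathrm{equ}}+o(1)$ produces the rapid phase $\Phi^\eps$ of \eqref{def:ssPhi} together with the bounded correction $\phi_{\mathrm{equ}}$, while $a^h=b_0+O(h^{p_1})$ yields the amplitude $b_0e^{i\phi_{\mathrm{equ}}}$ of \eqref{def:ssA} up to $o(1)$ in $L^\I([0,1-T^{-1}\eps^{\al/\g}];L^2)$, which is \eqref{eq:ssasymptotics}.

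\textbf{The main difficulty.} The functional-analytic steps (existence of the corrections, energy estimate) are routine given Proposition~\ref{prop:existence-sys}; the real work is the bookkeeping of orders: proving that the full hierarchy generated by the two families $h^{\frac{\al\g l}{\g-\al}}$, $h^{\frac{\al(\g j-1)}{\g-\al}}$ together with their interactions is exactly the finite set $P\cup\{1\}$, that the source terms and the initial conditions \eqref{eq:tmp302}, \eqref{eq:tmp305} account for all of it, and that the coexistence of the argument $\tau$ (for $b_0$, $\phi_{\mathrm{equ}}$) and the shifted argument $\tau-h^{\AG}$ (for the $\phi_j$) does not leak an uncancelled $O(h^p)$ with $p<1$. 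This is precisely the content of the heuristics in Section~\ref{subsec:summary2} (the ``phase'', ``amplitude'' and ``interaction'' corrections), and carrying it out rigorously — especially tracking the residual through the singular weight $\tau^{\g-2}$ near $\tau=0$ while the initial time moves at speed $h^{\AG}$ — is the part that requires care.
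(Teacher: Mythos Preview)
Your approach is sound and reaches the same conclusion, but it is organized differently from the paper's proof, and the difference is worth noting. You construct all the corrections $(b_i,w_i)$ and $(b_{\mathrm{equ}},w_{\mathrm{equ}})$ first as solutions to the limit linear systems, then form the full approximate pair $(a^h_{\mathrm{app}},v^h_{\mathrm{app}})$, compute the residual upon substitution into \eqref{eq:sysav1}, and close by an energy estimate on the difference. The paper instead runs a single induction on the expansion itself: at step $K$ it sets
\[
(b_K^h,w_K^h)(\tau):=h^{-p_K}\Bigl((a^h,v^h)(\tau+h^{\AG})-(b_0,w_0)(\tau+h^{\AG})-\sum_{i=1}^{K-1}h^{p_i}(b_i,w_i)(\tau)\Bigr),
\]
observes that this pair solves an \emph{exact} system of the form \eqref{eq:system1}--\eqref{eq:system2} with $h$-dependent coefficients $c_i^h=h^{p_K}$, $f^h(\tau)=(\tau+h^{\AG})^{\g-2}$, $B_i^h=\sum_{i<K}h^{p_i}b_i$, $R_1^h$ containing $\frac{i}{2}h^{1-p_K}\Delta b_0$ and the quadratic interactions of lower corrections, and then invokes both the existence and the convergence parts of Proposition~\ref{prop:existence-sys} in one stroke. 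This simultaneously yields the uniform bound on $(b_K^h,w_K^h)$ and identifies its limit $(b_K,w_K)$ as the solution of \eqref{eq:tmp301}--\eqref{eq:tmp302}; the initial value $(b_K^h,w_K^h)(0)$ is read off from Proposition~\ref{prop:t-expansion} alone. No separate residual computation, no Gronwall, and no near-$\tau=0$ expansion of the correction terms $(b_i,w_i)$ is needed.

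The practical payoff of the paper's organization is that the time-shift discrepancies you flag---the mismatch between $b_0(\tau+h^{\AG})$ and $b_0(\tau)$, and between $(\tau+h^{\AG})^{\g-2}$ and $\tau^{\g-2}$---never have to be estimated explicitly: they sit inside the $h$-dependence of the coefficients $B_i^h,W_i^h,f^h$ of the exact remainder system, and Assumption~\ref{asmp:approximate-coef} only asks that these converge strongly as $h\to0$. In your scheme these discrepancies land in the residual at orders such as $h^{p_i+\AG}$, which need not exceed $1$; your assertion that they are ``already carried by a correction term'' is not literally true, since the systems \eqref{eq:tmp301} contain no such shift-correction sources, and making this precise would essentially force you back into the paper's peel-off-one-order induction. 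Your route can be completed, but the bookkeeping you rightly identify as the main difficulty is precisely what the paper's organization is designed to sidestep.
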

\begin{remark}
In \eqref{def:ssPhi}, the time variable of $\phi_j$ ($j \ge 1$)
is not $\eps^{\frac{\al}{\g}}/(1-t)$ but 
\[
	\frac{t\eps^{\frac{\al}{\g}}}{1-t}=\frac{\eps^{\frac{\al}{\g}}}{1-t} - \eps^{\frac{\al}{\g}}.
\]	
Although this variable is not stable 
on the final layer $1-t = T^{-1} \eps^{\frac{\al}{\g}}$,
this choice is suitable when we work with the well-prepared data (see Section \ref{subsec:wpdata}).
Of course, the Taylor expansion
\[
	\phi_j\( \frac{t\eps^{\frac{\al}{\g}}}{1-t}\) = \sum_{k=0}^\I
	(-\eps^{\frac{\al}{\g}})^k (\partial_t^k \phi_j)\( \frac{\eps^{\frac{\al}{\g}}}{1-t}\)
\]
will exclude the variable ${t\eps^{\frac{\al}{\g}}}/(1-t)$
from $\Phi^\eps$, however, we do not pursue this point any more.
\end{remark}
\begin{remark}\label{rmk:type}
Let us classify the phase functions in \eqref{eq:ssasymptotics} according to
the notion in Section \ref{subsec:summary2}.
If $\phi_i(0)\not\equiv 0$ (resp. $b_i(0)\not\equiv 0$), that is, if
there exists a number $l \ge 1$ such that
$p_i=\frac{\al\g l-\al}{\g-\al}$
(resp. $p_i=\frac{\al\g l}{\g-\al}$),
then $\phi_i$ is the correction from phase (resp. correction from amplitude);
in particular $\phi_i=\phi_{\mathrm{pha},l}$ (resp. $\phi_i=\phi_{\mathrm{amp},l}$).
On the other hand, if $\phi_i(0)\equiv b_i(0) \equiv 0$ then
$\phi_i$ is the correction from interaction;
in particular $\phi_i=\phi_{\mathrm{int},l^\prime}$ for some $l^\prime$.
Notice that the summation in the system \eqref{eq:tmp301} is decomposed as
\[
	\sum_{p_j+p_k = p_i} = \sum_{(j,k)=(i,0),(0,i)}
	+ \sum_{p_j+p_k = p_i,jk\neq0}.
\]
The second sum is the interaction term, which is an external force.
When $\phi_i$ is the correction from interaction, it always has a nonzero interaction term.
Otherwise, $\phi_i\equiv0$ since the system for $\phi_i$
is posed with the zero initial condition.
There is a possibility that the correction from phase (resp. correction from amplitude)
has an interaction term
Indeed,
it happens if there is a triplet $j,k,l$ such that $p_i=\frac{\al\g l-\al}{\g-\al}=p_j+p_k$
(resp. $p_i=\frac{\al\g l}{\g-\al}=p_j+p_k$) and $jk\neq0$.
In this case, there is a resonance between the correction from phase
(resp. correction from amplitude) and the correction from interaction.
$\phi_{\mathrm{equ}}$ solving \eqref{eq:tmp304}--\eqref{eq:tmp305}
is the correction from equation with or without resonance:
If $\ti{\phi}(0)\not \equiv 0$ (resp. $\ti{b}(0)\not \equiv 0$ )
then there is a resonance with
the correction from phase (resp. correction from amplitude);
if $\sum_{p_j+p_k}\not\equiv0$ then there is a resonance with
the correction from interaction.
We note that the resonance among the correction from
phase, the correction from interaction, and the correction from equation
(resp. the correction from amplitude, the correction from interaction, and the correction from equation)
may happen, and that, however, 
the resonance between the correction from phase and the correction from amplitude
never happens because there is no pair $l,l^\prime$ such that
$\g l = \g l^\prime -1$ if $\g>1$.
\end{remark}
\begin{proof}
As presented in Section \ref{subsec:strategy}, the asymptotic
behavior of the solution \eqref{eq:r4}--\eqref{eq:odata} boils down to
the asymptotic behavior of the solution to 
\eqref{eq:sys}.
By means of Lemma \ref{lem:int}, we work with $(a^h,v^h):=(a^h,\nabla \phi^h)$ which solves \eqref{eq:sysav1}--\eqref{eq:sysav2}.

The existence of $(a^h,v^h)$ and the expansion of $\phi_0$ are already proven
in Propositions \ref{prop:p-existence} and \ref{prop:t-expansion}, respectively.
Let $P$ be as in \eqref{def:P}, $N=\sharp P -1$,
and $p_i\in P$ ($i=0,1,\dots,N$) be such that $\{ p_i\}_{i=0}^N=P$ and $p_i<p_{i+1}$.
It suffices to show that $(a^h,v^h)$ is expanded as
\begin{equation*}
	\begin{aligned}
		a^h(\tau+h^{\AG}) ={}& b_0(\tau+h^{\AG}) + \sum_{i=1}^N h^{p_i} b_i(\tau) + hb_{\mathrm{equ}}(\tau) + o(h), \\
		v^h(\tau+h^{\AG}) ={}& w_0(\tau+h^{\AG}) + \sum_{i=1}^N h^{p_i} w_i(\tau) + hw_{\mathrm{equ}}(\tau) + o(h).
	\end{aligned}
\end{equation*}
Plugging this and
\begin{align*}
	b_{\mathrm{equ}}(\tau)={}&b_{\mathrm{equ}}(\tau+h^{\AG}) + o(1), &
	w_{\mathrm{equ}}(\tau)={}&w_{\mathrm{equ}}(\tau+h^{\AG}) + o(1).
\end{align*}
to \eqref{eq:grenier} and \eqref{eq:spct},
we obtain \eqref{eq:ssasymptotics}.
\smallbreak

{\bf Step 1}.
We first prove by induction that
\begin{equation}\label{eq:tmp303}
	\begin{aligned}
		a^h(\tau+h^{\AG}) ={}& b_0(\tau+h^{\AG}) + \sum_{i=1}^{k} h^{p_i} b_i(\tau)  + o(h^{p_{k}}), \\
		v^h(\tau+h^{\AG}) ={}& w_0(\tau+h^{\AG}) + \sum_{i=1}^{k} h^{p_i} w_i(\tau)  + o(h^{p_{k}})
	\end{aligned}
\end{equation}
holds for $k=N$, where $(b_i,w_i)$ is a solution of 
\eqref{eq:tmp301}--\eqref{eq:tmp302}.
One verifies from Proposition \ref{prop:p-existence} that \eqref{eq:tmp303} holds if $k=0$.
We put $K \in [1, N]$.
We assume for induction that \eqref{eq:tmp303} holds for
 $k=K-1$, and put
\begin{equation*}
	\begin{aligned}
		b_{K}^h(\tau) ={}& h^{-p_{K}} \( a^h(\tau+h^{\AG}) - b_0(\tau+h^{\AG}) - \sum_{i=1}^{K-1} h^{p_i} b_i(\tau)\), \\
		w_{K}^h(\tau) ={}& h^{-p_{K}}\( v^h(\tau+h^{\AG}) - w_0(\tau+h^{\AG}) - \sum_{i=1}^{K-1} h^{p_i} w_i(\tau)\).
	\end{aligned}
\end{equation*}
By the equation for $(a^h,v^h)$, we see that $(b^h_K,w^h_K)$ solves
\begin{equation*}
\left\{
\begin{aligned}
	\d_\tau b^h_{K} ={}& h^{p_{K}}Q_1(b^h_{K},w^h_{K})
	 + Q_1(B_1^h,w^h_K) +  Q_1(b^h_K,W_1^k)  + R_1^h+ \frac{i}{2}h\Delta b^h_K, \\
	 \d_\tau w^h_{K} ={}& h^{p_{K}}Q_2(w^h_{K},w^h_{K})
	 + Q_2(W_2^h,w^h_K) +  Q_2(w^h_K,W_2^k)  \\
	&{} + (\tau+h^{\AG})^{\g-2} \(h^{p_{K}}Q_3(b^h_{K},b^h_{K})
	 + Q_3(B_2^h,b^h_K) +  Q_3(b^h_K,B_2^k) \) \\
	&{} + R_2^h , \\
		b_{K}^h(0) ={}& h^{-p_{K}} \( a_0 - b_0(h^{\AG}) - \sum_{i=1}^{K-1} h^{p_i} b_i(0)\), \\
		w_{K}^h(0) ={}& h^{-p_{K}}\( - w_0(h^{\AG}) - \sum_{i=1}^{K-1} h^{p_i} w_i(0)\),
\end{aligned}
\right.
\end{equation*}
where
\begin{align*}
	&B_1^h = B_2^h = \sum_{i=0}^{K-1} h^{p_i}b_i \to b_0(t), \\
	&W_1^h= W_2^h= \sum_{i=0}^{K-1} h^{p_i}w_i \to w_0(t),
\end{align*}
\begin{align*}
	R_1^h = {}& \sum_{p_l<K,p_k<K,p_l+p_k=p_{K}} Q_1 (b_{p_l},w_{p_k}) \\
	&{} + \sum_{p_l<K,p_k<K,p_l+p_k>p_{K}} h^{p_l+p_k-p_K}Q_1 (b_{p_l},w_{p_k}) \\
	&{} + \frac{ih^{1-p_K}}{2}\sum_{i=0}^{K-1}h^{p_i} \Delta b_i  \\
	\to {}& \sum_{p_l<K,p_k<K,p_l+p_k=p_{K}} Q_1 (b_{p_l},w_{p_k}),
\end{align*}
and
\begin{align*}
	R_2^h = {}& \sum_{p_l<K,p_k<K,p_l+p_k=p_{K}} \(Q_2 (w_{p_l},w_{p_k})
	+ (t+h^{\AG})^{\g-2}Q_3(b_{p_l},b_{p_k}) \) \\
	&{} + \sum_{p_l<K,p_k<K,p_l+p_k>p_{K}} h^{p_l+p_k-p_K}Q_2 (w_{p_l},w_{p_k}) \\
	&{} + \sum_{p_l<K,p_k<K,p_l+p_k>p_{K}} h^{p_l+p_k-p_K}(t+h^{\AG})^{\g-2}Q_3 (b_{p_l},b_{p_k}) \\
	\to {}& \sum_{p_l<K,p_k<K,p_l+p_k=p_{K}} \(Q_2 (w_{p_l},w_{p_k})
	+ t^{\g-2}Q_3(b_{p_l},b_{p_k}) \).
\end{align*}
Moreover, applying the time expansion of $(b_0,w_0)$,
we deduce that $(b_K^h(0),w_K^h(0))$ is uniformly bounded and that, as $h\to 0$,
\begin{align*}
	b_{K}^h(0) ={}& - h^{-p_{K}} \( b_0(h^{\AG})
	- \sum_{\{j;\frac{\al\g j }{\g -\al}<p_K\}} h^{\frac{\al\g j }{\g -\al}} a_j\), \\
	\to{}&
	\begin{cases}
	-a_{j^\prime} & \text{ if }\exists j^\prime \text{ such that } p_K = \frac{\al\g j^\prime }{\g -\al},   \\
	0 & \text{ otherwise},
	\end{cases} \\
	w_{K}^h(0) ={}& - h^{-p_{K}} \( w_0(h^{\AG})
	- \sum_{\{j;\frac{\al(\g j -1)}{\g -\al}<p_K\}} h^{\frac{\al(\g j -1)}{\g -\al}} v_j\), \\
	\to{}&
	\begin{cases}
	-v_{j^{\prime\prime}} & \text{ if }\exists j^{\prime\prime} \text{ such that } p_K = \frac{\al(\g j^{\prime\prime} -1)}{\g -\al},   \\
	0 & \text{ otherwise}.
	\end{cases}
\end{align*}
Note that either $b_K^h(0) \to 0$ or $w_K^h(0) \to 0$ holds for all $K$
since $\g >1$ implies there is no pair $(j,j^\prime)$
such that $\g j = \g j^\prime -1$.
Therefore, we apply Proposition \ref{prop:existence-sys} to obtain the solution $(b_K^h,w_K^h)$.
Put $(b_K,w_K):=(b_K^h,w_K^h)_{|h=0}$. 
Then, it solves \eqref{eq:tmp301}--\eqref{eq:tmp302}, and so \eqref{eq:tmp303}
holds for $k=K$.
By induction \eqref{eq:tmp303} holds for $k=N$.

{\bf Step 2}.
Mimicking the argument in Step 1,
we construct $(b_{\mathrm{equ}},w_{\mathrm{equ}})$ such that
\begin{equation*}
	\begin{aligned}
		a^h(\tau+h^{\AG}) ={}& b_0(\tau+h^{\AG}) + \sum_{i=1}^N h^{p_i} b_i(\tau) + hb_{\mathrm{equ}}(\tau) + o(h), \\
		v^h(\tau+h^{\AG}) ={}& w_0(\tau+h^{\AG}) + \sum_{i=1}^N h^{p_i} w_i(\tau) + hw_{\mathrm{equ}}(\tau) + o(h),
	\end{aligned}
\end{equation*}
holds.
Notice that $(b_{\mathrm{equ}},w_{\mathrm{equ}})$ solves
\begin{equation*}
	\begin{aligned}
		\d_\tau b_{\mathrm{equ}} ={}& Q_1(b_0,w_{\mathrm{equ}}) + Q_1(b_{\mathrm{equ}},w_0) +
		\sum_{p_j+p_k = 1} Q_1(b_{p_j},w_{p_k}) + \frac{i}{2}\Delta b_0, \\
		\d_\tau w_{\mathrm{equ}} ={}& Q_2(w_0,w_{\mathrm{equ}}) + Q_2(w_{\mathrm{equ}},w_0) + 
		\tau^{\g-2}(Q_3(b_0,b_{\mathrm{equ}}) + Q_3(b_{\mathrm{equ}},b_0)) \\
		&{} + \sum_{p_j+p_k = 1} \(Q_2(w_{p_j},w_{p_k})+\tau^{\g-2}Q_3(b_{p_j},b_{p_k})\), \\
 	\end{aligned}
\end{equation*}
\begin{align*}
	\ti{b}(0) ={}&
	\begin{cases}
	-a_l & \text{ if } 1=\frac{\al\g l}{\g-\al} \, \exists l, \\
	0 & \text{ otherwise},
	\end{cases}
	&
	\ti{w}(0) ={}&
	\begin{cases}
	-v_{l^\prime} & \text{ if } 1=\frac{\al\g l^\prime-\al}{\g-\al} \, \exists l^\prime, \\
	0 & \text{ otherwise}.
	\end{cases}
\end{align*}
\end{proof}
\begin{remark}
By a similar proof, we obtain higher order approximation.
Let $0<\al<\g$ and $\g>1$.
We modify the set $P$ defined by \eqref{def:P} as 
\[
	P^\prime :=\left\{l+\frac{\al\g m}{\g-\al} + \frac{\al(\g-1)n}{\g-\al}; \quad
	l,m,n \ge 0\right\},
\]
and number the elements of $P^\prime$ as
$0=p_0<p_1<\dots<p_k<\dots$.
Then, we have, for all $k$,
\begin{equation}\label{eq:higherexpansion}
	\begin{aligned}
		a^h(\tau+h^{\AG}) ={}& b_0(\tau+h^{\AG}) + \sum_{i=1}^k h^{p_i} b_i(\tau) + o(h^{p_k}), \\
		\phi^h(\tau+h^{\AG}) ={}& \phi_0(\tau+h^{\AG}) + \sum_{i=1}^k h^{p_i} \phi_i(\tau) + o(h^{p_k}).
	\end{aligned}
\end{equation}
Plugging this to \eqref{eq:grenier} and \eqref{eq:spct},
we obtain higher order approximation of the original solution.
It is important to note that the \eqref{eq:higherexpansion}
has the same form in the both $\al\ge1$ and $\al<1$ case.
When we concerned with higher order WKB-type approximation,
we need four kinds of correction terms even if $\al \ge 1$.
From this respect, the supercritical WKB case $\al <1$ 
can be characterized as the the special case $p_1<1$.
\end{remark}

\subsection{Well-prepared data and general data}\label{subsec:wpdata}
We  conclude this section with some remarks about the well-prepared data.
By the semiclassical conformal transform \eqref{eq:spct} and
Grenier's transform \eqref{eq:psi-ap}, the leading order WKB-type approximation of
the original solution $u^\eps$ to \eqref{eq:r4}--\eqref{eq:odata} is reduced to
the approximation of the solution $(a^h,\phi^h)$ to 
\begin{equation}\label{eq:sysa}
	\left\{
	\begin{aligned}
		&\partial_\tau a^h + \nabla \phi^h \cdot \nabla a^h + \frac12 a^h \Delta \phi^h = i\frac{h}{2}\Delta a^h,\\
		&\partial_\tau \phi^h + \frac12 |\nabla \phi^h|^2 + \l \tau^{\g-2}(|y|^{-\g}*|a^h|^2) =0
	\end{aligned}
	\right.
\end{equation}
with
\begin{align}\label{eq:sysb}
	a^h_{|\tau=h^{\frac{\al}{\g-\al}}}={}&a_0, &
	\phi^h_{|\tau=h^{\frac{\al}{\g-\al}}}={}&0,
\end{align}
up to order $O(h^1)$.
Note that \eqref{eq:sysa}--\eqref{eq:sysb} and \eqref{eq:sys} are the same.
As shown in Proposition \ref{prop:p-existence},
there exists a limit $(b_0,\phi_0):=(a^h,\phi^h)_{|h=0}$
which solves \eqref{eq:lsys}.
Now, we consider the distances
\begin{align*}
	d_a^h(t) :=&{} a^h(t) -b_0(t), &
	d_\phi^h(t) :=&{} \phi^h(t) - \phi_0(t).
\end{align*}
If these distances are order $o(h^1)$ then
we immediately obtain the WKB-type approximation $b_0\exp(i\eps^{\frac{\al}{\g}-1}\phi_0)$ of $u^\eps$
(recall that $h=\eps^{1-\frac{\al}{\g}}$).
However, unfortunately, the following two respects prevent us:
The first one is the $h$-dependence of the equation \eqref{eq:sysa},
and the second one is the $h$-dependence of the initial time \eqref{eq:sysb}.
The first problem is handled by employing the correction term $(b_{\mathrm{equ}},\phi_{\mathrm{equ}})$ solving \eqref{eq:tbtp1} and,
therefore we discuss about the initial data in the followings.
\smallbreak

The given initial data \eqref{eq:sysb} is written as
\begin{align*}
	d_a^h(h^{\AG}) :=&{} a_0 -b_0(h^{\AG}), &
	d_\phi^h(h^{\AG}) :=&{}  -\phi_0(h^{\AG}).
\end{align*}
The main difficulty in the case $\al \le 1$ is the fact that these terms
become larger than $O(h^1)$ as $h \to 0$.
The simplest way to overcome this difficulty is to modify
the initial data \eqref{eq:sysb} into
\begin{align}\label{eq:sysc}
	a^h_{|\tau=h^{\frac{\al}{\g-\al}}}={}&b_0(h^{\AG}), &
	\phi^h_{|\tau=h^{\frac{\al}{\g-\al}}}={}&\phi_0(h^{\AG}),
\end{align}
which ensures $d_a^h(h^{\AG}) \equiv d_\phi^h(h^{\AG}) \equiv 0$.
Note that \eqref{eq:sysa} with \eqref{eq:sysc} is the same as \eqref{eq:wpsys}.
Back to the transform \eqref{eq:spct}, this initial data corresponds to
the well-prepared data
\begin{equation}\tag{\ref{eq:wpdata}}
	u^\eps_{|t=0}(x) = b_0(\eps^{\frac{\al}{\g}},x)e^{-i\frac{|x|^2}{2 \eps}}
	\exp (i \eps^{\frac{\al}{\g} -1} \phi_0(\eps^{\frac{\al}{\g}},x)).
\end{equation}
This initial condition is rather natural in the supercritical case $\al<1$,
and the original initial condition in \eqref{eq:odata}
is a kind of constraint (see, Section \ref{subsec:summary2}).
If we use this well-prepared data, we do not have to consider any correction term
other than $(b_{\mathrm{equ}},\phi_{\mathrm{equ}})$ and, for all $0 < \al < \g$,
it holds that
\begin{align*}
	a^h(\tau) =&{} b_0(\tau) + h b_{\mathrm{equ}}(\tau) + o(h), \\
	\phi^h(\tau) =&{} \phi_0(\tau) + h \phi_{\mathrm{equ}}(\tau) + o(h)
\end{align*}
with $(b_0,\phi_0)$ and $(b_{\mathrm{equ}},\phi_{\mathrm{equ}})$
solving \eqref{eq:lsys} and \eqref{eq:tbtp1}--\eqref{eq:tbtp2}, respectively,
and so that the asymptotic behavior of the solution $u^\eps(t,x)$ to \eqref{eq:r4}--\eqref{eq:wpdata} is given by
\[
	b_0\(\frac{\eps^{\frac{\al}{\g}}}{1-t}, \frac{x}{1-t}\)
	\exp\( i \phi_{\mathrm{equ}}\(\frac{\eps^{\frac{\al}{\g}}}{1-t},\frac{x}{1-t}\)
	+ i\eps^{\frac{\al}{\g}-1} \phi_0 \(\frac{\eps^{\frac{\al}{\g}}}{1-t},\frac{x}{1-t}\)\).
\]
This approximate solution is the same one as in the case $\al >1$.
We still need $O(h^1)$-correction term 
$(b_{\mathrm{equ}},\phi_{\mathrm{equ}})$ because
it comes from the $h^1$-dependence of the equation which $(a^h,\phi^h)$ solves.

In Theorems \ref{thm:main} and \ref{thm:sscase}, we took another way.
By the expansion of $(b_0,\phi_0)$ around $\tau=0$,
there exist nonnegative integers
$k_1, k_2$ depending on $\al$ and $\g$ such that
\begin{align*}
	d_a^h(h^{\AG}) =&{} a_0 - b_0(h^{\AG}) = -\sum_{j=1}^{k_1} h^{\frac{\al\g j}{\g -\al}} a_j + o(h^1), \\
	d_\phi^h(h^{\AG}) =&{} -\phi _0(h^{\AG}) = -\sum_{j=1}^{k_2} h^{\frac{\al(\g j -1)}{\g -\al}} \varphi_j + o(h^1).
\end{align*}
We subtract the main part $-\sum_{j=1}^{k_1} h^{\frac{\al\g j}{\g -\al}} a_j$
and $-\sum_{j=1}^{k_2} h^{\frac{\al(\g j -1)}{\g -\al}} \varphi_j$
by constructing appropriate correction terms (correction from amplitude and correction from phase, respectively).
Indeed, if we let the correction terms $(b_j,\phi_j)$ ($0 \le j \le N$) and $(b_{\mathrm{equ}},\phi_{\mathrm{equ}})$
be defined as in Theorem \ref{thm:sscase} then, at the initial time $\tau=h^{\AG}$,
it holds that
\begin{align*}
	&\(a^h(\tau) - b_0(\tau) - \sum_{j=1}^N  h^{p_j} b_j(\tau-h^{\AG}) - hb_{\mathrm{equ}}(\tau)\)_{|\tau=h^{\AG}} \\
	&{}= -\(b_0(h^{\AG}) - \sum_{l \in \{l\ge 0;\frac{\al\g l}{\g -\al} \le 1\}} h^{\frac{\al\g l}{\g -\al}} a_l \)+ h\(b_{\mathrm{equ}}(0)-b_{\mathrm{equ}}(h^{\AG})\) \\
	&{}=o(h^1),
\end{align*}
\begin{align*}
	&\(\phi^h(\tau) - \phi_0(\tau) - \sum_{j=1}^N  h^{p_j} \phi_j(\tau-h^{\AG}) - h\phi_{\mathrm{equ}}(t)\)_{|\tau=h^{\AG}} \\
	&{}= -\(\phi_0(h^{\AG}) - \sum_{l \in \{l\ge1;\frac{\al(\g l-1)}{\g -\al} \le 1\}} h^{\frac{\al(\g l-1)}{\g -\al}} \varphi_l \)+ h\(\phi_{\mathrm{equ}}(0)-\phi_{\mathrm{equ}}(h^{\AG})\) \\
	&{}=o(h^1).
\end{align*}
It is important to note that the time variable of $(b_j,\phi_j)$ ($1 \le j \le N$)
is not $\tau$ but $\tau-h^{\AG}$. 
This choice is the key for the above cancellation.
By the transform \eqref{eq:spct}, the time variable of $\phi_j$ 
($1 \le j \le N$) in the definition \eqref{def:ssPhi} of $\Phi^\eps(t)$
should be given by
\[
	\frac{\eps^{\frac{\al}{\g}}}{1-t} - \eps^{\frac{\al}{\g}} =
	\frac{t\eps^{\frac{\al}{\g}}}{1-t}.
\]
These correction terms allow us to work with general data.

\section{Proofs of Theorems \ref{thm:main2} and \ref{thm:main3}}\label{sec:blowup}
Recall that the system we consider is
\begin{equation}\tag{\ref{eq:EPr}}
\left\{
\begin{aligned}
&\d_\tau (\rho r^{n-1}) + \partial_r (|\rho v r^{n-1}) = 0, \\
&\d_\tau v + v \partial_r v - \l \tau^{n-4} \d_r V_{\mathrm{p}} = 0, \\
&\d_r (r^{n-1}\partial_r V_{\mathrm{p}}) = \rho r^{n-1}\\
&\rho_{|\tau=0}=|a_0|^2, \quad v_{|\tau=0}=0,
\end{aligned}
\right.
\end{equation}
where $r=|x|$.
We introduce the ``mass'' $m$ and the ``mean mass'' $M$
\[
	m(\tau,r):=M(\tau,r)r^{n}:= r^{n-1}\d_r \Phi(\tau,r) = \int_0^r \rho(\tau,s) s^{n-1}ds.
\]
We also set $m_0(r):=m(0,r)$ and $M_0(r):=M(0,r)$.
Combining the first and the third equations of \eqref{eq:EPr}, we obtain
\begin{equation}\label{eq:math}
	\partial_\tau m + v \partial_r m = 0,
\end{equation}
where we have used $(\rho v r^{n-1})_{|r=0}=0$.
To solve this equation we also introduce the characteristic curve $X(\tau,R)$:
\begin{equation*}
	\frac{d X}{d \tau} = v(\tau,X(\tau,R)), \quad X(0,R) = R.
\end{equation*}
Denoting differentiation along this characteristic curve by $\prime:=d/d\tau$,
the mass equation \eqref{eq:math} and the second equation of \eqref{eq:EPr} yield
\begin{align}\label{eq:EPr2a}
 &m^\prime = 0,  \\
 \label{eq:EPr2b}
 &v^\prime = \l \tau^{n-4} \frac{m}{X^{n-1}},  \\
 \label{eq:EPr2c}
 &X^\prime = v.
\end{align}
We solve this system with the initial data
\[
	(X,m,v)_{|\tau=0} = (R,m_0(R),0),
\]
where $R \geq 0$ parameterizes the initial location.

By \eqref{eq:EPr2a}, the mass $m$ remains constant along the characteristics,
that is, $m(\tau,X(\tau,R)) = m_0(R)$.
Therefore, \eqref{eq:EPr2b} and \eqref{eq:EPr2c} yield
\begin{align}\label{eq:Xdd}
	X^{\prime\prime} &= \frac{\l m_0(R)\tau^{n-4}}{X^{n-1}},
	& X(0,R) & = R,
	& X^\prime(0,R) &= 0.
\end{align}
This equation is studied also in \cite{Tsu09}.

\begin{proof}[Proof of Theorem \ref{thm:main2}]
It suffices to show that $X(\tau,R)\le 0$ holds for some $R>0$ and $\tau \le T^*$.
This argument is similar to that for the blow-up for the nonlinear Schr\"odinger
equation by Glassey \cite{GlJMP}.
Since $\l<0$, we see from \eqref{eq:Xdd} that $X^{\prime\prime} \le 0$, and so that $X^{\prime} \le X^\prime(0)=0$ and $X \le X(0) = R$ for all $\tau \ge 0$.
Therefore, again by \eqref{eq:Xdd}, we verify that
\[
	X^{\prime\prime}(\tau,R)\le -\frac{|\l| m_0 \tau^{n-4}}{R^{n-1}}= - |\l| R M_0(R) \tau^{n-4}.
\]
Note that $M_0(R)>0$ for some $R$ provided $a_0$ is not identically zero.
Now, fix such $R$.
Integrating twice with respect to time, we obtain
\[
	X(\tau,R) \le R - \frac{|\l| RM_0(R)}{(n-2)(n-3)} \tau^{n-2},
\]
which yields $X(\tau,R) \le 0$ for large time.
The critical time is not greater than 
\[
	T^* = \(\frac{(n-2)(n-3)}{|\l| \sup_{R\ge 0}M_0(R)}\)^{\frac1{n-2}}
\]
since $R$ is arbitrary.
\end{proof}
\begin{proof}[Proof of Theorem \ref{thm:main3}]
In order to clarify the necessary and sufficient condition,
we repeat the proof in \cite{ELT-IUMJ}.
We first note that $\partial X/\partial R (0,R)=1$, and that
 the solution is global if and only if
 $\partial X/\partial R (t,R) > 0$ for all $\tau \ge 0$ and $R \ge 0$.

By \eqref{eq:Xdd}, we have
\[
	X^{\prime\prime} = \frac{\l m_0}{X^3}.
\]
We multiply this by $X^\prime$ and integrate in time to obtain
\[
	(X^\prime)^2 = 0^2 + \frac{\l m_0}{R^2} - \frac{\l m_0}{X^2}
	= \frac{\l m_0}{R^2} -XX^{\prime\prime},
\]
where we have used \eqref{eq:Xdd} again.
This yields
\[
	(X^2)^{\prime\prime} =2(X^\prime)^2 + 2XX^{\prime\prime} = \frac{2\l m_0}{R^2}.
\]
Then, integration twice gives
\[
	X^2 = R^2 + \frac{\l m_0}{R^2}t^2.
\]
Since $X \ge R > 0$ from \eqref{eq:Xdd} and $X^\prime(0)=0$, we see
\begin{equation}\label{eq:CharCurve}
	X(\tau,R) = \sqrt{R^2 + \frac{\l m_0}{R^2}\tau^2}.
\end{equation}
An explicit calculation shows that
\[
	\frac{\partial X}{\partial R}(\tau,R) = 
	\frac{ 1 + \l  (\frac12 |a_0|^2  - M_0 )\tau^2}
	{\sqrt{1 + \l M_0 \tau^2}},
\]
where $m_0 = M_0 R^4 = \int_0^R |a_0|^2s^3 ds$ by definition.
Note that $\lim_{R\to 0} M_0 = |a_0(0)|^2/4$ since $a_0$ is continuous.
Hence, $\partial X/\partial R (t,R) > 0$ holds for all $\tau \ge 0$ and $R \ge 0$
if and only if $\frac12 |a_0|^2  - M_0 \ge 0$ for all $R > 0$, that is,
\begin{equation}\label{eq:cond-global}
	|a_0(R)|^2 \ge \frac{2}{R^4}\int_0^R |a_0(s)|^2 s^3 ds
\end{equation}
for all $R > 0$.
Moreover, the critical time is given by
\[
	\tau_c = \(\frac{2}{\l \max_{r  > 0} \( 2M_0(r) - |a_0(r)|^2\)}\)^{\frac12}.
\]
The condition \eqref{eq:cond-global} can be written as
\[
	\partial_{R} \( \frac{m_0(R)}{R^2} \) \ge 0.
\]
We take $R_0$ so that $m_0(R_0) > 0$. Then, it holds only if
\[
	m_0(R) \ge \frac{m_0(R_0)}{R_0^2}R^2
\]
for all $R \ge R_0$,
which fails if $\lim_{R \to \I}m_0(R) < \I$, that is, if $a_0 \in L^2(\R^4)$.
\end{proof}
\begin{remark}\label{rem:sncond}
If $n=4$ then \eqref{eq:EPr} becomes an autonomous system.
Now we consider the classical solution of autonomous model
\begin{equation*}
\left\{
\begin{aligned}
&\partial_\tau (\rho r^{n-1}) + \partial_r (\rho v r^{n-1}) = 0, \\
&\partial_\tau v + v \partial_r v -\l \partial_r V_{\mathrm{p}} = 0, \\
& \partial_r (r^{n-1}\partial_r V_{\mathrm{p}}) = \rho r^{n-1},\\
&\rho_{|\tau=0}=\rho_0, \quad v_{|\tau=0}=v_0,
\end{aligned}
\right.
\end{equation*}
where $(\tau,r)\in \R_+\times \R_+$.
It is shown in \cite{MaRIMS} that, under the assumption that
$n\ge 3$, $\rho_0\in L^1((0,\I),r^{n-1}dr)$, $v_0(0)=0$,
and $v_0(r)\to0$ as $r\to\I$,
the corresponding solution is global if and only if $\l<0$ and
\[
	v_0(r) =\sqrt{\frac{2|\l|}{(n-2)r^{n-2}}\int_0^r \rho_0(s)s^{n-1}ds}. 
\]
\end{remark}
\begin{remark}\label{rem:indicator}
The function $\Gamma=\partial X/\partial R$ is called the indicator function.
As in the above proof, the solution blows up if and only if $\Gamma$
takes non-positive value.
Moreover, the solution is given by
\begin{align*}
	\rho(t,X(t,R)) =&{} \frac{\rho_0(R)R^{n-1}}{X^{n-1}(t,R) \Gamma(t,R)}, \\
	v(t,X(t,R)) =&{} \frac{d X}{d t} (t,R).
\end{align*}
Example \ref{ex:blowup} is easily checked by this form
since the characteristic curve $X$ is given explicitly by \eqref{eq:CharCurve}
in the case of $\l<0$ and $N=4$.
\end{remark}

\subsection*{Acknowledgments}
The author expresses his deep gratitude
to Professor R\'emi Carles for his reading an early version of the paper
and fruitful discussions in Montpellier.
Deep appreciation goes to Professor Yoshio Tsutsumi for his 
valuable advice and constant encouragement.
This research progressed during the author's stay in Orsay.
The author is grateful to people in department of mathematics, University of
Paris 11 for their kind hospitality.
This research is supported by JSPS fellow.

\providecommand{\bysame}{\leavevmode\hbox to3em{\hrulefill}\thinspace}
\providecommand{\href}[2]{#2}

\end{document}